\newtheorem{Main}{\color{bleu1} Theorem}
\newtheorem{Mainc}[Main]{\color{bleu1} Corollary}
\def\red{\color{red}}
\renewcommand\appendix{\par
  \setcounter{section}{0}
  \setcounter{subsection}{0}
  \setcounter{figure}{0}
  \setcounter{table}{0}
  \renewcommand\thesection{Appendix \Alph{section}.}
  \renewcommand\thefigure{\Alph{section}\arabic{figure}}
  \renewcommand\thetable{\Alph{section}\arabic{table}}
}
\definecolor{bluegray}{rgb}{0.4, 0.6, 0.8}
\def\red{\color{red}}
\newtheorem{theorem}{\color{bleu1} Theorem}[section]                             
\newtheorem{lemma}{\color{bleu1} Lemma}[section]                             
\newtheorem{proposition}{\color{bleu1} Proposition}[section]                             
\newtheorem{corollary}{\color{bleu1} Corollary}[section]                             
\newtheorem{remark}{\color{bleu1} Remark}[section]                             
\newtheorem{definition}{\color{bleu1} Definition}[section]                             
\newtheorem{question}{\color{bleu1} Question}[section]                             
\newtheorem{conjecture}{\color{bleu1} Conjecture}[section]                             
\renewenvironment{proof}{\noindent {\bf \color{bleu1} Proof.}}{\qed\vskip 0.2cm}
\newcommand{\C}{\mathbb C}
\newcommand{\R}{\mathbb R}
\newcommand{\Q}{\mathbb Q}
\newcommand{\Z}{\mathbb Z}
\newcommand{\N}{\mathbb N}
\newcommand{\T}{\mathbb{T}}
\newcommand{\cT}{\mathcal{T}}
\renewcommand{\a}{\alpha}
\renewcommand{\min}{\mathop{\mbox{min}}}
\def\mi\mathfrak{i}
\def\DS{\displaystyle}
\def\PL{{\Prob_{\Leb}}}
\def\al{{\alpha}}
\def\eps{{\varepsilon}}
\def\NN{{\mathbb{N}}}
\def\R{{\mathbb{R}}}
\def\QQ{{\mathbb{Q}}}
\def\Q{{\mathbb{Z}}}
\def\T{{\mathbb{T}}}
\def\a{{\alpha}}
\def\N{{\mathbb{N}}}
\def\R{{\mathbb{R}}}
\def\Q{{\mathbb{Q}}}
\def\Z{{\mathbb{Z}}}
\def\T{{\mathbb{T}}}
\def\bbu{\bar u}
\DeclareMathAlphabet{\mathcalligra}{T1}{calligra}{m}{n}
\def\sp{ \mathcalligra p }
\def\sq{ \mathcalligra q }
\def\BS{{\bar {\Sigma}}}
\newcommand{\cO}{\mathcal O} 
\numberwithin{equation}{section}
\title{\color{bleu1} \large{Instabilities of analytic quasi-periodic tori}}
\author{\small{G. Farr\'e, B. Fayad}}
\date{}
\definecolor{bleu1}{RGB}{0,57,128}
\newcommand{\barZ}{\bar Z}
\newcommand{\ignore}[1]{}
\def\C{\mathcal C}
\definecolor{dgreen}{rgb}{0.1,0.6,0.1}
\definecolor{bluegreen}{rgb}{0.1,0.5,0.2}%{0.6,0.3,0.4}
\def\black{\color{black}}
\def \qed{\hfill$\square$}
\def\eps{{\varepsilon}}
\def\Leb{{\rm Leb}}
\def\Prob{{\mathbb{P}}}
\def\Var{{\rm Var}}
\def\EXP{{\mathbb{E}}}
\def\naturals{\mathbb{N}}
\def\Tor{\mathbb{T}}
\def\reals{\mathbb{R}}
\def\integers{\mathbb{Z}}
\def\bv{\mathbf{v}}
\def\bru{{\bar{u}}}
\def\cA{\mathcal{A}}
\def\cB{\mathcal{B}}
\def\cD{\mathcal{D}}
\def\cJ{\mathcal{J}}
\def\cE{\mathcal{E}}
\def\cO{\mathcal{O}}
\def\cP{\mathcal{P}}
\def\cR{\mathcal{R}}
\def\cT{\mathcal{T}}
\def\fp{\mathfrak{p}}
\def\fq{\mathfrak{q}}
\def\beq{\begin{equation}}
\def\eeq{\end{equation}}
\author{Dmitry Dolgopyat, Bassam Fayad, Maria Saprykina}
\title{\color{bleu1} Erratic behavior for 1-dimensional random walks in a Liouville quasi-periodic environment}
\begin{document}

%\end{document}
\maketitle
\color{bleu1}
\begin{abstract}
\black
%We  show that one-dimensional random walks in a generic quasi-periodic
%environment have an erratic statistical behavior. \end{abstract}
{We  show that one-dimensional  random walks in a quasi-periodic
environment with Liouville  frequency generically have an erratic statistical behavior. 
In the recurrent case we show that neither quenched nor annealed limit theorems hold
and both drift and variance exhibit wild oscillations, being logarithmic at some times and
almost linear at other times. In the transient case we show that the annealed Central Limit
Theorem fails generically. These results are in stark contrast with the Diophantine case where
the Central Limit Theorem with linear drift and variance was established by Sinai.}
\end{abstract}
\black

\maketitle
%\tableofcontents
\section{Introduction}
\subsection{Quasiperiodic random walks.}
Let   
$C^\infty(\Tor,(0,1))$ be the set of smooth functions from the
standard torus $\T=\R/\Z$ to $(0,1)$ (these can be identified with
smooth $1$-periodic functions from $\R$ to $(0,1)$). 
Each triple $(\fp,\alpha, x)$, where  $\fp \in C^\infty(\Tor,(0,1))$,
$\a \in (0,1)$ and $x\in \T$ defines a sequence of numbers
$\sp(j)=\fp(x+j\a)$ for all $j\in \Z$. The sequence $(\sp(j))_{j\in
  \Z}$ will be called the {\it quasi-periodic environment} defined by
$(\fp,\alpha, x)$, or just {\it environment} $(\fp,\alpha, x)$.

Consider the nearest neighbor random walk $(Z_t)_{t\in \N}$ on the
one dimensional lattice $\Z$, given by $Z_0=0$, and 
 for $t\in \N$
\begin{equation} %\tag{${\mathcal{QP}}$}
\label{DefMCZ}
\Prob_x(Z_{t+1}=k+1|Z_t=k)=\fp(x+k\a), \quad 
\Prob_x(Z_{t+1}=k-1|Z_t=k)=\fq(x+k\a),
\end{equation}
where $\fq(x)=1-\fp(x)$, and where we stressed the dependence on $x$ with the
notation $\Prob_x$.

Following \cite{S2}, one can also define a related
Markov process $(X_t)_{t\in \N}$ on $\T$: 
\begin{equation}
\label{DefMC_X}
X_t=X_0+Z_t \alpha \ {\rm mod} \ 1, \quad X_0=x.
\end{equation}

When $\a \notin \Q$, we call $(Z_t)_{t\in \N}$ a {\it one-dimensional
  random walk in quasi-periodic environment}, or for short {\it a
  quasi-periodic walk}.
The behavior of these walks has many similarities with that of the classical random
walks in a random environment, and yet many differences. 
We will present the statements in
different contexts, in particular, quenched and annealed limit laws, as
defined below.

%For each choice of the initial condition $x \in \T$, the medium of the walk $(Z_t)_{t\in \N}$ is fixed and given by the quasi%periodic sequence $\fp(x+k\a)$. 
%If we equip the circle $\T$ with its Haar measure, that we will call ${\rm Leb}$, then we may be interested in annealed limit% laws of the walk $(Z_t)_{t\in \N}$ when $x$ is uniformly distributed according to ${\rm Leb}$, or in quenched limit laws whe%n $x$ is a fixed typical point for ${\rm Leb}$.  

\begin{definition} \label{def.prob} 
We use the notation $\Prob_x$ for the distribution of the paths of $(Z_t)_{t\in \N}$ defined in \eqref{DefMCZ}, when $x \in \T$ is fixed, and 
 $\PL$ for the  distribution of the paths of $(Z_t)_{t\in \N}$ when
 $x$ is uniformly distributed on $\Tor$ with respect to the  Haar
 measure ${\rm Leb}$ on $\T$. 
The notation $\mathbb E_x$ is reserved for the expectation under the
probability $\Prob_x$, while $\mathbb E_{\rm Leb}$ is used for the
expectation  under $\PL$. Similarly, we use the notations ${\rm
  Var}_x$ and ${\rm Var}_{\rm Leb}$ to denote variances for $\Prob_x$ and
$\PL$. The same notations will be used for the distribution of 
the paths of $(X_t)_{t\in \N}$ defined in \eqref{DefMC_X}.
\end{definition}

\begin{definition}[Quenched and Annealed limit
  theorems] \label{def.limits} 

Consider a quasi-periodic environment defined by a triple
$(\fp,\a,x)$ where $\fp\in C^\infty(\T,(0,1))$, $\a \notin \Q$ and
$x\in \T$, and let
$(Z_t)_{t\in \N}$ be the random walk defined as in \eqref{DefMCZ}.
When $x \in \T$ is fixed, we say that the walk $(Z_t)_{t\in \N}$ satisfies a quenched limit theorem
  if there  exist  sequences $(b_t(x))_{t\in \N}$ and $(\sigma_t(x))_{t\in \N}$ 
 and a proper
 distribution $\cD_x(\cdot)$ such that for any $z \in \R$
\begin{equation*} 
\lim_{t\to\infty} \Prob_{x} (Z_t-b_t(x) <  \sigma_t(x) z)=\cD_x(z). 
\end{equation*} 

We say that the walk satisfies an annealed limit theorem if there 
exist sequences  $(b_t)_{t\in \N}$ and $(\sigma_t)_{t\in \N}$  and a proper
distribution $\cD(\cdot)$ such that for any $z \in \R$
\begin{equation*} 
\lim_{t\to\infty} \Prob_{\rm Leb} (Z_t-b_t <  \sigma_t z)=\cD(z). 
\end{equation*} 
In analogy with the drift of simple random walks, we call $b_t(x)$ or $b_t$  the {\it drift} at time $t$. 
\end{definition} 

We will see that the notion of symmetry is very important in distinguishing different types of
quasi-periodic walks. Following \cite{S2}, we  adopt the following definition. 

\begin{definition}[Symmetric and asymmetric walks]
Given an environment defined by $(\fp,\a,x)$,  
we call $(Z_t)_{t\in \N}$ defined as in  \eqref{DefMCZ}  a symmetric quasi-periodic walk if $\fp$ satisfies 
\begin{equation}
\label{eq_sym}
\int_\Tor \ln \fp(x) dx=\int_\Tor \ln \fq(x) dx. 
\end{equation}
Otherwise we say that the walk is asymmetric. 
We denote by $\cP\subset C^\infty(\Tor,(0,1))$ the set of functions
satisfying the symmetry condition \eqref{eq_sym}, and by
$\cP^c=C^\infty(\Tor,(0,1))\setminus \cP$---the
set of asymmetric walks. \label{defP}
\end{definition} 

Note that in general, even if $\fp \in \cP$, the sequence
$(\fp(x+k\a))_{k\in \Z}$ does not exhibit any symmetry. The effect of
the symmetry condition above is the result of the averaging effect,
which comes from the equidistribution of any orbit of an irrational
rotation on the circle.
It is a classical fact in dynamical systems that the effectiveness of 
the equidistribution of the orbits of an irrational rotation of the
circle is determined by the arithmetic properties of its angle $\a$.

Recall that $\a \in \R$ is said to be {\it Diophantine} (denoted $\a \in {\rm DC}(\gamma,\tau)$) if there exists $\gamma>0$ and $\tau\geq 0$ such that for any $(p,q)\in \Z \times \N^*$
\begin{equation} \label{eq.DC} \tag{\rm{DC}} \left|\a-\frac{p}{q}\right|\geq \frac{\gamma}{q^{2+\tau}}. \end{equation}
%%%%%We then say that $\a \in {\rm DC}(\gamma,\tau)$.

An irrational real number that is not Diophantine is called {\it Liouville}.

An elementary but noticeable fact of number theory is that Liouville numbers form a dense
$G^\delta$ set of $\R$, while the Lebesgue measure of this set is zero. 

We now introduce a short hand notation that will help simplify the exposition. 

\begin{definition}[Diophantine and Liouville walks]
We call $(Z_t)_{t\in \N}$ defined as in  \eqref{DefMCZ} with $\a \in {\rm DC}$  a Diophantine walk. If  $\a \notin \Q \cup {\rm DC}$, we call the walk Liouville. 
\end{definition}

Because the equidistribution of Diophantine rotations is more effective than that of the Liouville ones, the averaging on the medium due to quasi-periodicity is more effective in the case of  Diophantine quasi-periodic environment. As a result, they behave similarly to simple random walks, as was established by Sinai in \cite{S2}. Namely,
they have linear (possibly null) drift and satisfy the Central Limit Theorem. The precise statements 
for Diophantine walks will be recalled in \S \ref{sec.dioph}.
By contrast, nothing was known for Liouville environments
beyond the results which hold in any uniquely ergodic environment.

In this paper we study the Liouville case.
We show that for any given Liouville $\a$ the walk $(Z_t)_{t\in \N}$ defined by \eqref{DefMCZ}
with a generic $\fp \in C^\infty(\Tor,(0,1))$  has a very erratic statistical behavior. 
By {\it generic} we mean of first category for the $C^\infty$ topology. Since a generic irrational $\a$ is Liouville, our results imply the erratic behavior for one-dimensional random walks in a generic quasi-periodic environment. What we mean by erratic, is that at different times scales the walk $(Z_t)_{t\in \N}$ behaves very differently (think of a walk that drifts almost linearly for a subsequence of times $(t_n)_{n\in \N}$, while for another subsequence $(t'_n)_{n\in \N}$ it will be localized logarithmically around the origin).

  Our main results can be summarized as follows. For symmetric random walks we show that the following behavior is generic:

$\color{bleu1} \bullet$ The spread of the walk (as measured, for example, by standard deviation) oscillates wildly. 
Sometimes the walk is localized at a logarithmic scale while at other times the variance grows faster
than $t^{1-\eps}$. At the latter scales the walk bypasses both $-t^{1-\eps}$  and $t^{1-\eps}$ with probability larger than $0.1$.
%{\red PROBABLY WE CAN DO $t^{2-\eps}.$}

$\color{bleu1} \bullet$ The drift of the walk oscillates wildly: sometimes it is larger than $t^{1-\eps}$,
sometimes it is smaller than $-t^{1-\eps}$, sometimes it is of order 1.

$\color{bleu1} \bullet$ The walk does not satisfy neither an annealed nor a quenched limit theorem: the set of limit distributions
includes the normal distribution as well as a distribution with atoms. \smallskip

We will also show that

$\color{bleu1} \bullet$ A one-dimensional random walk in a generic asymmetric quasi-periodic environment 
does not have an annealed limit law. \smallskip

The precise statements of the results outlined above are contained in \S \ref{res.liouv}.

\ignore{Let us give a deliberately vague statements here that will be made precise in Theorems A,B,C and D of \S \ref{res.liouv}.

\medskip

\noindent {\it A one-dimensional random walk in a generic symmetric quasi-periodic environment has an erratic statistical behavior : For almost every starting point $x$, there exist time sequences $\{u_n\},\{v_n\},\{w_n\},\{y_n\}$  such that $Z_{u_n}$ is logarithmically confined, $Z_{v_n}$ satisfies a CLT with almost linear positive drift and almost linear variance, $Z_{y_n}$ behaves like a simple symmetric random walk with a centered CLT with almost linear variance, $Z_{w_n}$ has no limit law.} 

}

\medskip

\noindent{\sc \color{bleu1} Plan of the paper and outline of the
  proofs.} Section \ref{sec.statements} contains all the main statements and a review of related resutls from the literature. Quasi-periodic Diophantine 
environments are discussed in \S \ref{sec.dioph}. \S  \ref{res.liouv} 
contains the precise statements about Liouville walks. It turns out
that their behavior is quite different 
from the Diophantine walks, and is, in fact, quite similar to the
walks in a generic  deterministic elliptic environment that we define and discuss in \S \ref{SSGen}. The Liouville walks are more
erratic than the walks in independent random environments that we briefly review in \S \ref{SSIID}.
Several open questions motivated by the present work are discussed in \S \ref{SSOpen}.

 %We first present the main known results on Diophantine walks which
 %show that  their behavior is similar to that of simple random
 %walks. Then, we state that random walks in a generically fixed
 %environment  exhibit
%very erratic behavior. Finally, we present our precise results on
%generic Liouville walks that inherit almost all of the erratic
%behavior of the  walks in a generic environment. 

%Section \ref{ScPrel} contains the necessary preliminaries. 
The {\bf main technical tool}  in the study of one dimensional random walks in a
fixed environment given by transition probabilities $\sp_j, j \in \Z$ and
$\sq_j=1-\sp_j$, is the martingale
\eqref{EqMart}. The important quantities that are involved in this
martingale, and that determine the behavior of the walk, are the sums 
\begin{equation}
\label{Pot}
\Sigma{(n)}=
\begin{cases}
{\DS \sum_{j=1}^{n} \ln \sq_j-\ln\sp_j }&\text{if } n\geq 1, \\
0, & \text{if }n=0, \\
{\DS \sum_{j=n+1}^{0} \ln \sp_j-\ln\sq_j }& \text{if }n\leq -1.
\end{cases}
\end{equation}
The function $n\mapsto \Sigma(n)$ is known as the {\it potential}. 
A direct inspection shows that 
if $\sp_j>\sq_j$ for all $j$ in some interval $I\subset \Z$ then $\Sigma$ is decreasing on $I$, 
while if
$\sq_j>\sp_j$ for all $j\in I$ then $\Sigma$ is increasing on $I$. 
Thus, the guiding intuition is that the walker tends to go downwards
on the graph of $\Sigma$ and spends a 
lot of time near local minima of the potential.  
The study of the potential plays a crucial role in the study
of random walks on $\Z$ starting with the pioneering work of Sinai
\cite{S1} and it is also central in the present paper.

\medskip 

\noindent {\bf \color{bleu1} The reason for the strong difference of behavior between Diophantine and Liouville walks.}    In the case of quasi-periodic walks defined by some 
$\fp \in C^\infty(\T,(0,1))$ as in \eqref{DefMCZ}, both the transition probabilities
$\sp_{x,j}=\fp(x+j\a)$ and the sums $\Sigma_x(n)$ defined by
\eqref{Pot} are dependent on $x\in \T$.

If the walk is symmetric (see \eqref{eq_sym}) and $\a$ is
Diophantine, the fact that 
$\ln (1-\fp) -\ln \fp$ is a smooth coboundary  above $R_\a$ implies that the
sums $\Sigma_x(n)$ are bounded, which renders the Diophantine walk
very similar
to the simple symmetric random walk 
(we will come back to this in \S \ref{sec.dioph} and we refer the
reader to \cite{DG4, DG6} for more details). 
%for a detailed discussion
%of random walks in bounded potentials).

{\it A contrario,} obtaining various specific behaviors for the sums
$\Sigma_x(n)$ of a generic function $\fp \in  C^\infty(\T,(0,1))$ when $\a$ is
Liouville, 
underlies all our findings. Displaying very different behaviors of
$\Sigma_x(n)$ at different time scales $n$ and different initial
conditions $x$ is the key behind the erratic behavior of the Liouville walks.

\medskip 

\noindent {\bf \color{bleu1}  Outline of the proofs.} Section \ref{ScPrel} contains the necessary preliminaries.  To keep the exposition as clear as possible, we split the analysis 
of Liouville walks into two separate parts. 
In the first part (Sections \ref{ScAbstPot}--\ref{ScCrit})  
we deal with a fixed environment and
describe several criteria based  on the behavior of the potential, 
that imply various types of behaviors for the random walk. 

In Section \ref{ScAbstPot}, we formulate criteria for localization,
one-sided drift and two-sided drift  for random walks in a fixed
environment. The proofs are given in Section \ref{ScCrit}; they
rely on 
auxillary  estimates of exit times for random walks in a fixed 
environment presented in Section \ref{ScExit}. 

The second part of the paper (Sections \ref{ScConstr}--\ref{Sec_ABC}) 
deals with quasi-periodic walks. In Section \ref{ScConstr}
we  prove Theorem \ref{Cond_to_behaviors}, stating that when $\alpha$
is Liouville, then for a residual set of symmetric environments, the criteria for localization, one-sided drift and
two-sided drift are satisfied for almost every $x\in \T$. In fact, as
we mentioned above, the criteria ask for particular  behaviors of the
sums $\Sigma_x(n)$ at different time scales $n$ and different initial
conditions $x$. 
By definition of the criteria, it will be easy to show that the set
of $\fp \in  \cP$ for which these criteria are satisfied contains a countable
intersection of countable unions of open sets. These open sets, are
subsets of  $\fp \in \cP$ for which a criterion on $\Sigma_x(n)$ holds for some
$n$ and some (not too small)  intervals of  initial conditions $x$.

To prove the theorem, we just need to show that the union of these
open sets is dense. For this we start by  perturbing any given $\fp
\in \cP$ into a smooth multiplicative coboundary  $\bar \fp$ above $R_\a$. 
Then, the main construction is to show that  any smooth coboundary  
$\bar \fp$ can be perturbed to $\bar \fp+e_n(\cdot) \in \cP$ that
satisfies each of the above mentioned criteria at different scales. This is stated in the main 
Proposition \ref{PropCond}. \S \ref{SSRedCob} contains the reduction of Theorem \ref{Cond_to_behaviors} 
to Proposition \ref{PropCond} while the rest of 
 Section \ref{ScConstr} is devoted to the proof of 
Proposition \ref{PropCond}. The proof 
proceeds by a Liouville construction in which we obtain $e_n$ and prove 
the required properties of the  ergodic sums as in        
\eqref{Pot}  for the function $\bar \fp+e_n$.

Section \ref{Sec_ABC} contains the proofs of all the statements
about Liouville walks, including the asymmetric walks that are treated
in \S \ref{SSAsym}. We note that the proofs in \S \ref{SSAsym} do not
use the results of Sections \ref{ScAbstPot}--\ref{ScCrit} so the
reader who is only interested in the asymmetric walks could skip those sections.

In the appendix we give the proof of the results in the generic deterministic elliptic medium. They are similar, albeit easier because we have more freedom in perturbing the medium, to the proofs in the Liouville environments.

\section{Results}  \label{sec.statements}
{In this section we present our main results about Liouville walks and
compare them with other classes of random walks. We start with a summary on what is known for Diophantine walks. Then we state our results on Liouville walks. After that we give a brief list of erratic behaviors of a  walk in a generic deterministic elliptic medium. Next, we ask some natural questions that arise from our results. Finally, we end the section with a very brief survey of related results in the case of random walks in independent random media. }

\subsection{Diophantine walks. } \label{sec.dioph}  
In this section we review the known 
results about quasi-periodic Diophantine
walks. These results show that Diophantine walks 
are very similar to the simple random walks.
%We will see in \S \ref{SSIID} that random walks in
%an independent random environment have quite different behavior.

Recall the notations 
%$\Prob_x, \PL, \mathbb E_x, \mathbb E_{\rm Leb}, {\rm Var}_x, {\rm Var}_{\rm Leb}$  
introduced in Definition \ref{def.prob}. In particular, recall the notation $\cP^c=C^\infty(\Tor,(0,1)) \setminus \cP$ where $\cP$ is the set of functions satisfying the symmetry condition
\eqref{eq_sym}. The following results are known.
\begin{theorem} {\it (\cite[Theorem 1]{S2})}
\label{ThAC}
{\it \bf (Stationary measure)} For  $\alpha \notin \Q$ and $\fp \in  C^\infty(\Tor,(0,1))$ such that any of the following two conditions holds:

(1) {\red $\fp \in \cP^c$,}

(2) $\alpha$ is Diophantine,

then there exists a unique probability measure $\nu$ on $\T$ that is stationary  for the process $(X_t)_{t\in \N}$ defined as in \eqref{DefMC_X}, and  this measure is absolutely continuous with respect
to the Haar measure on $\T$.   Moreover, for each $\phi\in C^0(\Tor,\R)$ and for any $x\in \T$ 
$$ \lim_{t\to \infty} \EXP_x(\phi(X_t))= \nu(\phi). $$
\end{theorem}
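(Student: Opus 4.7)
The plan is to construct an invariant density $h\in C^\infty(\T)$ for the Markov operator of $(X_t)$ satisfying
$$h(x)=\fp(x-\alpha)\,h(x-\alpha)+\fq(x+\alpha)\,h(x+\alpha).$$
The key reduction is to introduce the \emph{flux} $J(x):=\fq(x)h(x)-\fp(x-\alpha)h(x-\alpha)$; a direct substitution shows that the stationarity equation is equivalent to $J(x+\alpha)=J(x)$. Since $\alpha\notin\Q$ the rotation $R_\alpha$ is uniquely ergodic on $\T$, so any continuous $R_\alpha$-invariant function is constant and hence $J\equiv c$ for some $c\in\R$. Setting $g:=\fp h$ and $\rho:=\fp/\fq$, the problem collapses to the first-order cocycle equation $g(x+\alpha)=\rho(x+\alpha)(g(x)+c)$, whose $n$-step iterate reads
$$e^{\Sigma_x(n)}g(x+n\alpha)=g(x)+c\sum_{j=1}^n e^{\Sigma_x(j-1)},$$
with $\Sigma_x(n)=\sum_{k=1}^n(\ln\fq-\ln\fp)(x+k\alpha)$ a reindexing of the potential from \eqref{Pot}.

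In case (1), unique ergodicity of $R_\alpha$ gives $\Sigma_x(n)/n\to\mu:=\int(\ln\fq-\ln\fp)\,dx\neq 0$ uniformly in $x$; assume for definiteness $\mu>0$. Applying the iterated cocycle with $x$ replaced by $y-n\alpha$ and letting $n\to\infty$, the contribution $e^{-\Sigma_{y-n\alpha}(n)}g(y-n\alpha)$ vanishes exponentially (since any admissible $g=\fp h$ is continuous on $\T$, hence bounded), while the sum collapses to the uniformly convergent series
$$g(y)=c\sum_{N\ge 0}\prod_{l=0}^N\frac{\fp(y-l\alpha)}{\fq(y-l\alpha)},$$
whose terms decay geometrically thanks to the uniform Birkhoff convergence. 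Choosing $c>0$ so as to enforce $\int h\,dx=1$ produces a unique smooth, strictly positive density $h=g/\fp$; the sign $\mu<0$ is treated symmetrically by iterating forward. In case (2) with $\fp\in\cP$ one has $\int(\ln\fq-\ln\fp)\,dx=0$, and the Diophantine condition on $\alpha$ permits solving the cohomological equation $\psi(y)-\psi(y-\alpha)=(\ln\fq-\ln\fp)(y)$ with $\psi\in C^\infty(\T)$, so that $\Sigma_x(n)=\psi(x+n\alpha)-\psi(x)$ stays uniformly bounded. In the iterated cocycle the LHS is then bounded, while a non-zero $c$ would force linear growth of the RHS, so $c=0$; the reduced equation $g(x+n\alpha)\,e^{\Sigma_x(n)}=g(x)$ combined with density of the orbit $\{x+n\alpha\}$ and continuity yields $g(y)=K\,e^{-\psi(y)}$ and hence $h(y)=K\,e^{-\psi(y)}/\fp(y)$. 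The subcase of (2) with $\fp\in\cP^c$ is covered by (1).

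Uniqueness of $\nu$ follows because the density $h$ is determined up to one multiplicative constant, fixed by $\int h\,dx=1$; the explicit formulas moreover show $h\in C^\infty(\T)$ and $h>0$, so $\nu$ is absolutely continuous with respect to $\Leb$. For the convergence $\EXP_x[\phi(X_t)]\to\nu(\phi)$ for all $x$ and $\phi\in C^0(\T,\R)$, I would argue on the transfer operator $P\phi(x)=\fp(x)\phi(x+\alpha)+\fq(x)\phi(x-\alpha)$: since $\fp,\fq$ are bounded away from zero and the chain admits a smooth strictly positive invariant density, a coupling of two trajectories issued from different points $x,y$, exploiting Weyl equidistribution of $\{x+k\alpha\}_{k\in\Z}$, yields $\|P^t(x,\cdot)-P^t(y,\cdot)\|_{\mathrm{TV}}\to 0$; integrating against $\nu(dy)$ then gives the desired convergence. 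The main obstacle is precisely this last step: $P^t(x,\cdot)$ is supported on the finite set $\{x+k\alpha\}_{|k|\le t}$, so no direct Doeblin minorization by $\Leb$ is available, and the contraction must be obtained via a quantitative equidistribution argument (polynomial rate in the Diophantine case (2), or exploiting the transience of $Z_t$ in the asymmetric case (1)).
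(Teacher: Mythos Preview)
The paper does not prove Theorem~\ref{ThAC}; it is cited from Sinai~\cite{S2}. However, \S\ref{SSIM} sketches the existence of the absolutely continuous stationary density via exactly the flux argument you use: one rewrites the stationarity equation \eqref{EqIM} in terms of the flux (the paper's $f(x)=\fp(x)\rho(x)-\fq(x+\alpha)\rho(x+\alpha)$, which is your $-J(x+\alpha)$), observes it is $R_\alpha$-invariant hence constant, and then distinguishes the asymmetric case (nonzero flux, leading to the convergent series \eqref{IMDrift}) from the symmetric case (zero flux, leading to the multiplicative coboundary equation \eqref{MultCoB}, solvable when $\alpha$ is Diophantine). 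Your derivation of the explicit density in both cases is correct and essentially identical to this sketch, with only cosmetic differences in bookkeeping (you carry $g=\fp h$ and iterate backward in the asymmetric case, while the paper iterates forward after a reflection).

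There is, however, a genuine gap that you yourself flag. Your uniqueness claim (``$h$ is determined up to a multiplicative constant'') only establishes uniqueness among stationary measures with continuous density; it does not by itself rule out singular stationary measures. Full uniqueness is in fact a \emph{consequence} of the convergence statement $\EXP_x[\phi(X_t)]\to\nu(\phi)$ for every $x$ (integrate against any competing stationary $\nu'$), so the two missing pieces are really one. Your proposed coupling route is reasonable in spirit, but as you note, $P^t(x,\cdot)$ is supported on a finite orbit segment, so no uniform minorization is available and the argument as written is not a proof. Sinai's original argument in \cite{S2} proceeds differently, and the paper does not reproduce it; your existence argument matches what the paper does contain, while the ergodicity/convergence step remains to be supplied.
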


The precise definition of a stationary measure will be given in \S \ref{SSIM}.

For Diophantine frequencies, the walk $(Z_t)_{t\in \N}$ satisfies the  Central Limit Theorem, 
as shown by the following two statements. Denote
\begin{equation} \label{eq.normal} %\tag{$\mathcal N$} 
\Phi(z)=\int_{-\infty}^z \frac{1}{\sqrt{2\pi}} e^{-u^2/2}du. \end{equation}

\begin{theorem} {\it (\cite{Al}, \cite[Equations (15) and (20)]{S2})}
\label{ThAnn}%\margem{why do we call this theorem annealed?? Isn't it better to call it {\bf (Symmetric Diophantine walks)} /B}
%{\it \bf (Annealed CLT)}
For  $\alpha$ Diophantine  and $\fp \in  C^\infty(\Tor,(0,1))$,   and the walk $(Z_t)_{t\in \N}$  defined as in \eqref{DefMCZ}, there exist  $\bv\in\reals$ and $\sigma>0$ such that
for all $x$
\begin{equation}
\label{EqQuenched}
 \lim_{t\to\infty} \Prob_x(Z_t-t \bv<  \sigma \sqrt{t} z)=
\Phi(z). \end{equation} 
Therefore
\begin{equation}
\label{EqAnn}
 \lim_{t\to\infty} \PL(Z_t-t \bv<  \sigma \sqrt{t} z)=\Phi(z). 
\end{equation}
Moreover $\bv=0$ iff the walk is symmetric.
\end{theorem}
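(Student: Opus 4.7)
The plan is to follow the classical martingale approach adapted to Diophantine rotations, exploiting the fact that cohomological equations over $R_\alpha$ are smoothly solvable. Let $P$ denote the Markov operator of $(X_t)$ acting on $C^0(\T)$, namely $(P\phi)(x) = \fp(x)\phi(x+\alpha) + \fq(x)\phi(x-\alpha)$. By Theorem \ref{ThAC}, there is a unique stationary probability measure $\nu$ for $(X_t)$, absolutely continuous w.r.t. Lebesgue. Under the Diophantine hypothesis on $\alpha$ one can further show (by bootstrapping regularity from the invariance identity $P^*\rho = \rho$ using the fact that small divisors $|1 - e^{2\pi ik\alpha}|^{-1}$ grow only polynomially) that $\nu$ has a smooth positive density $\rho$. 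The natural candidate drift is
\[
\bv := \int_\T (2\fp(y)-1)\,\rho(y)\,dy.
\]

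Next I would solve the Poisson equation: find $H \in C^\infty(\T)$ with $(PH)(x) - H(x) = -(2\fp(x)-1-\bv)$. The mean-zero condition w.r.t. $\nu$ is satisfied by construction of $\bv$, and $P - \mathrm{Id}$ is a bounded perturbation of $R_\alpha - \mathrm{Id}$ acting on mean-zero smooth functions; standard Fourier analysis under \eqref{eq.DC} yields a $C^\infty$ solution with uniform estimates. Using $H$, I would decompose, with $\xi_{s+1}=Z_{s+1}-Z_s \in \{\pm 1\}$ and $\mathcal{F}_s = \sigma(Z_0,\dots,Z_s)$,
\[
Z_t - t\bv \;=\; \underbrace{\sum_{s=0}^{t-1}\bigl(\xi_{s+1} - (2\fp(X_s)-1)\bigr)}_{M^{(1)}_t} \;+\; \bigl(H(X_0) - H(X_t)\bigr) \;+\; \underbrace{\sum_{s=0}^{t-1}\bigl(H(X_{s+1}) - (PH)(X_s)\bigr)}_{M^{(2)}_t},
\]
where $M^{(1)}_t, M^{(2)}_t$ are $\mathcal{F}_t$-martingales with bounded increments. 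The quadratic variations grow linearly at rate $\sigma^2 > 0$ (strict positivity follows from $|\xi_{s+1}|=1$ and a non-degeneracy check on $H$), uniformly in $x$. The martingale CLT applied conditionally on $X_0=x$ then yields \eqref{EqQuenched}; since $\sigma_t, b_t$ are deterministic, \eqref{EqAnn} follows by integrating over $x$ with respect to Lebesgue.

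For the ``moreover'' part: if $\fp\in\cP$, the symmetry assumption means $\int \ln(\fq/\fp)\,dx = 0$, so by the Diophantine condition $\ln \fq - \ln \fp$ is a smooth coboundary $g \circ R_\alpha - g$ over $R_\alpha$; hence $\Sigma_x(n)$ in \eqref{Pot} is uniformly bounded in $x,n$. Plugging the closed-form expression for $\rho$ coming from the transfer operator (an analog of the Kesten formula, expressible in terms of $e^{\Sigma_x(\cdot)}$) into the definition of $\bv$, the boundedness of $\Sigma_x$ together with the symmetry forces the integral to collapse by pairwise cancellation, giving $\bv = 0$. Conversely, if $\fp \in \cP^c$ then $c := \int\ln(\fq/\fp)\,dx \neq 0$; by Birkhoff and the smooth coboundary reduction the potential satisfies $\Sigma_x(n)/n \to -c$ uniformly, which via the harmonic function $\phi(k)=\sum_{j<k}e^{\Sigma_x(j)}$ for the walk (classical Solomon/Kesten argument) forces ballistic speed of the sign of $-c$, so $\bv \neq 0$.

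The main obstacle is twofold: first, obtaining the smooth solution $H$ of the Poisson equation with uniform estimates, which is where the Diophantine condition is essential (on a Liouville $\alpha$ this step fails, and indeed the results of \S \ref{res.liouv} show the CLT itself fails generically); second, identifying $\bv$ in closed form in the asymmetric case to confirm $\bv \neq 0$, which requires the explicit stationary density via the potential. Both steps have well-known analogs in \cite{Al, S2}, on which this proof would rely.
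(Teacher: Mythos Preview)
The paper does not prove Theorem \ref{ThAnn}; it is quoted from \cite{Al} and \cite{S2}. Your outline (Poisson equation plus martingale CLT) is the classical route taken in those references, so in spirit there is nothing in the present paper to compare against.

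Two points nonetheless deserve correction. First, your justification for solving the Poisson equation is wrong: $P-\mathrm{Id}$ is \emph{not} a bounded perturbation of $R_\alpha-\mathrm{Id}$. Writing $(P-\mathrm{Id})\phi(x)=\fp(x)(\phi(x+\alpha)-\phi(x))+\fq(x)(\phi(x-\alpha)-\phi(x))$, the two increments enter with opposite sign and variable weights, so small-divisor Fourier inversion of $R_\alpha-\mathrm{Id}$ does not directly invert $P-\mathrm{Id}$. The Diophantine condition actually enters through the explicit smooth stationary density $\rho$ (via the coboundary equation \eqref{MultCoB} in the symmetric case, or \eqref{IMDrift} in the asymmetric one); once $\rho$ is smooth, the Poisson equation is handled by the methods of \cite{S2, DG4, DG6} rather than by Fourier analysis on $R_\alpha$.

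Second, your treatment of ``$\bv=0$ iff symmetric'' is unnecessarily indirect and can be made one line using \S\ref{SSIM}: the flux $f(x)=\fp(x)\rho(x)-\fq(x+\alpha)\rho(x+\alpha)$ is constant on $\T$; integrating and changing variables $x\mapsto x-\alpha$ in the second term gives $f=\int_\T(\fp-\fq)\rho\,dx=\bv$. The dichotomy (I)/(II) of \S\ref{SSIM} shows $f\neq 0$ precisely when $\fp\in\cP^c$, hence $\bv=0\iff\fp\in\cP$.
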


Note that Theorem \ref{ThAnn} shows that the Diophantine walks behave similarly
to simple random walks  independently of the starting point on the circle. Namely, they have linear growth, variance also grows linearly and the limit distribution is Gaussian.

In fact, asymmetric walks $(Z_t)_{t\in \N}$ for {\it all} irrational frequencies,  have quenched limits similar to simple random walks, but with a drift that depends on the starting point. 

\begin{theorem} {\it (\cite[Theorem B.2]{DG4})}  %\margem{ Isn't it better to call it {\bf (Asymmetric walks)} /B}
\label{ThQLT} %{\it \bf (Quenched CLT)}
For  $\alpha \notin \Q$ and $\fp\in \cP^{c} $, and the walk  $(Z_t)_{t\in \N}$ defined as in \eqref{DefMCZ}, there exist functions $(b_t(\cdot))_{t\in \Z}$ and
a number $\sigma>0$ such that for any $z\in \R$
\begin{equation}\label{eq.asym} 
\lim_{t\to\infty} \Prob_x(Z_t-b_t(x) <  \sigma \sqrt{t} z)=\Phi(z). 
\end{equation} 
In case $\a$ is Diophantine, we have that for every $x$, we can take $b_t(x)=t\bv$ where 
$\bv$ is given by Theorem \ref{ThAnn}.
\end{theorem}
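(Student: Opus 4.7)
The plan is to reduce the theorem to a quenched CLT for the hitting times $\tau_n:=\inf\{t:Z_t=n\}$ and then invert that CLT. The first step is a reduction to the transient case. Since $\fp\in\cP^c$, Birkhoff's ergodic theorem applied to $R_\alpha$ gives, for every $x$,
$$\frac{\Sigma_x(n)}{n} \longrightarrow c:=\int_\T(\ln\fq-\ln\fp)\,dx\neq 0.$$
Assume $c<0$. Solomon's classical potential criterion, which for a fixed environment depends only on the large-$n$ behavior of $\Sigma_x$, then shows that $Z_t\to+\infty$ $\Prob_x$-almost surely, so the $\tau_n$ are $\Prob_x$-almost surely finite.

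The second step is to analyze $\tau_n=\sum_{k=0}^{n-1}\eta_k$ with $\eta_k:=\tau_{k+1}-\tau_k$. Conditionally on the environment the $\eta_k$ are independent, and their conditional mean and variance admit closed-form expressions as absolutely convergent sums over $j\leq k$ of terms of order $e^{\Sigma_x(k)-\Sigma_x(j)}$. Because unique ergodicity of $R_\alpha$ applied to the continuous observable $\ln\fq-\ln\fp$ yields uniform convergence $\Sigma_x(n)/n\to c<0$, these series converge uniformly in $x$ and define continuous functions $F,G:\T\to(0,\infty)$ with $\EXP_x[\eta_k]=F(x+k\alpha)$ and $\Var_x[\eta_k]=G(x+k\alpha)$. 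Unique ergodicity then gives
$$\frac{\EXP_x[\tau_n]}{n}\to\bv^{-1}:=\int_\T F\,dx,\qquad \frac{\Var_x[\tau_n]}{n}\to\sigma_\tau^2:=\int_\T G\,dx,$$
uniformly in $x$, and these averages coincide with the moments of $\eta_0$ under the a.c.\ invariant measure of Theorem \ref{ThAC}.

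The third step is a quenched CLT for $\tau_n$. Applying the Lindeberg--Feller CLT to the conditionally independent centered increments $\eta_k-\EXP_x[\eta_k]$ yields $(\tau_n-\EXP_x[\tau_n])/(\sigma_\tau\sqrt n)\xrightarrow{d}\cN(0,1)$ under $\Prob_x$; the Lindeberg condition is implied by a uniform exponential tail bound $\Prob_x(\eta_k>T)\leq Ce^{-\delta T}$, itself a deterministic consequence of $\inf_\T\fp>0$ via a standard exit-time estimate for birth--death chains in a confined potential. I would then transfer this CLT to $Z_t$ via the standard time-space inversion $\tau_{Z_t}\leq t<\tau_{Z_t+1}$, which gives the claim with $\sigma:=\sigma_\tau\bv^{3/2}$ and $b_t(x)$ defined as the functional inverse at $t$ of $n\mapsto\EXP_x[\tau_n]$.

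In the Diophantine case, the cohomological equation $F-\bv^{-1}=H-H\circ R_\alpha$ admits a smooth solution $H$ (because $F$ is smooth and $\alpha\in{\rm DC}$), so $\EXP_x[\tau_n]=n/\bv+H(x)-H(x+n\alpha)=n/\bv+O(1)$ uniformly in $x$; inverting gives $b_t(x)=t\bv+O(1)$, which is absorbed in the $\sqrt t$ scale and recovers Theorem \ref{ThAnn}. The main obstacle is that for a general irrational $\alpha$ no rate of mixing for $R_\alpha$ is available, so the cohomological equation need not have a bounded solution: the coboundary $\EXP_x[\tau_n]-n/\bv$ is only $o(n)$ by Birkhoff. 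This is still enough for the inversion step to produce a well-defined drift function $b_t(x)$ satisfying the quenched CLT, but $b_t(x)$ may then oscillate substantially with both $x$ and $t$---an effect that the companion Liouville part of the paper studies in detail.
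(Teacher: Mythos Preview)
The paper does not prove Theorem~\ref{ThQLT}; it is quoted from \cite[Theorem~B.2]{DG4}. What the paper does record is the explicit form of the drift: equations \eqref{ueq}--\eqref{QDrift} define $u(x)=1+2\sum_{k\geq 0}\prod_{j=0}^k\lambda(x-j\alpha)$ and set $b_t(x)$ to be the first integer with $\sum_{k=0}^{b_t(x)} u(x+k\alpha)\geq t$. Your function $F$ is precisely this $u$ (your $\EXP_x[\eta_k]=u(x+k\alpha)$), and your ``functional inverse at $t$ of $n\mapsto\EXP_x[\tau_n]$'' is exactly \eqref{QDrift}. So your sketch recovers both the strategy and the drift formula that the paper attributes to \cite{G-Int, DG4}; the hitting-time decomposition plus Lindeberg--Feller plus inversion is indeed the standard route.

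One point to tighten: the uniform exponential tail $\Prob_x(\eta_k>T)\leq Ce^{-\delta T}$ is \emph{not} a consequence of ellipticity alone (for the simple symmetric walk $\eta_k$ has infinite mean). What makes it work here is the uniform linear decay $\Sigma_x(n)\leq -(c-\eps)n$ for $n$ large, which follows from unique ergodicity of $R_\alpha$ applied to the continuous observable $\ln\fq-\ln\fp$; this forces the walk started at $k$ to stay within a window of width $O(\log T)$ to the left of $k$ except with exponentially small probability, and then ellipticity finishes the job. With that correction your Lindeberg verification goes through, and your Diophantine coboundary argument for $F-\bv^{-1}$ is fine since the same uniform exponential decay makes $u$ smooth.
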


\begin{remark} A general formula for $b_t(\cdot)$ will be recalled in \S \ref{SSAsym} (see equation \eqref{QDrift}). \end{remark}

The results of Theorems \ref{ThAC}--\ref{ThQLT} have been extended to random walks driven by rotations of $\T^d$, for arbitrary $d\in \N,$
to random walks with bounded jumps where the walker can move from $x$ to $x+j\alpha$ with
$|j|\leq L$ for some $L>1$ and to quasi-periodic walks on the strip, see
\cite{Br2, DG4, DG5, DG6}.

\subsection{Liouville walks.} \label{res.liouv} 
Theorems \ref{ThAC}, \ref{ThAnn} and \ref{ThQLT}  naturally raise the question of what would be the behavior of 
a quasi-periodic walk when the driving frequency $\a$ is Liouville. 
The following statements show that their behavior can indeed be very
different from the Diophantine case.%, and is rather very similar to that of generic walks discussed in \S \ref{SSGen}.

%In Sections \label{ScConstr} and \label{Sec_ABC}, we will show the existence, for any Liouville $\alpha$, of  a dense $G_\delta$ set $S=S(\a) \subset \cP$ such that the walk $(Z_t)_{t\in \N}$ defined by  \eqref{DefMCZ} with $\a$ and $\fp \in S$, satisfies the following Theorems  \ref{ThFlMV}, \ref{ThFlMV2}. 

Recall that  $\cP\subset C^\infty(\Tor,(0,1))$ the set of functions
satisfying the symmetry condition \eqref{eq_sym}, and  $\Phi(z)$ is
the normal 
distribution given by \eqref{eq.normal}.

{\begin{Main}
\label{ThFlMV} 
%Consider a quasi-periodic environment defined by
%$(\fp,\alpha, x)$. 
For any Liouville $\alpha$ there exists  a dense $G_\delta$ set 
$\cR=\cR(\a) \subset \cP$, and for each $\fp\in \cR(\a)$ a set 
$S(\fp)\subset \T$ 
of full measure with the following property.  

Let $\fp\in \cR(\a)$ and $x\in S(\fp)$, and consider a quasi-periodic walk
$(Z_t)_{t\in \N}$ as in \eqref{DefMCZ} in the quasi-periodic environment  defined by $(\fp,\alpha, x)$.
Then 
%For almost every $x\in \T$,  
there exist strictly increasing sequences of integers $(r_n)_{n\in \N}$,
$(s_n)_{n\in \N}$, $(t_n)_{n\in \N}$, and a sequence of positive
integers $(\eps_n)_{n\in \N}$ with $\eps_n \to 0$,  such that for any
$\eps>0$ and for $k$ sufficiently large we have:

(a) {\bf (Localization)} \ For $T=r_n$, it holds

\begin{equation}
\label{EqMLoc}
\Prob_x \left( \max_{t\leq T}|Z_{t}|>16(\ln T)^2 \right) <T^{-2}
      \text{ and } \
      \Var_x(Z_{T})< 300 (\ln T)^4 ; 
%\Prob_x\left(|Z_{T}|>(\ln T)^2\right) <  T^{-1/2} \text{ and } \ 
%{ \Var_x(Z_{T})\leq 20 (\ln T)^4}.
\end{equation}
\smallskip 

(b) {\bf (One-sided positive drift)} \ For $T=s_n$ and some $\mu_n(x) > T^{1-\varepsilon}$, for any $z\in \R$, it holds

\begin{equation}
\label{EqOneSideDr}
\left| \Prob_x\left( \frac{Z_{T}-\mu_{n}(x)}{\sigma_n(x)} <  z\right) - \Phi(z) \right|
<\varepsilon, \quad \left| \frac{\ln\sigma_n(x)}{\ln
    T}-\frac{1}{2}\right|<\varepsilon
\end{equation}
where $\sigma_n(x)=\sqrt{\Var_x(Z_{T})}$;
\smallskip 

\ignore{\red (b') {\bf (One-sided negative drift)} \ For $T=s'_n$ and some $\mu'_n(x) < -s_n^{1-\varepsilon}$, for any $z\in \R$, it holds

\begin{equation}
\label{EqOneSideDr}
\left| \Prob_x\left( \frac{Z_{T}-\mu'_{n}(x)}{\sigma'_n(x)} <  z\right) - \Phi(z) \right|
<\varepsilon, \quad \left| \frac{\ln\sigma'_n(x)}{\ln
    T}-\frac{1}{2}\right|<\varepsilon
\end{equation}
where $\sigma'_n(x)=\sqrt{\Var_x(Z_{T})}$.}
\smallskip

{(c) {\bf (Two-sided drift)} For $T=t_n$ there exist $b_n(x),b'_n(x) \in [0.3T^{1/5},0.4 T^{1/5}]$ and $\eps_n \to 0$ such that 

\begin{equation}
\label{EqTwoSideDr}
\Prob_x \left( |Z_T - b_n(x)|<\eps_n T^{1/5}\right)>0.1, \quad   
\Prob_x \left( |Z_T + b'_n(x)|<\eps_n T^{1/5} \right) >0.1.
\end{equation}}

%Moreover, we can choose the same  $r_n$, $s_n$, $t_n$, $\eps_n$ for all $x$ in  sets   $ {\mathcal J}_{k,j}  \in \T, j=a,b,c$, each of which has measure $1/100$. }
\end{Main}
Naturally,  statement (b) can be modified  to provide a one-sided
{\it negative} drift for the walks over a subsequence of times.  

We will also need a different  version of property (b)  in order to
guarantee the absence of an annealed limit for the walk. Namely, we need a sequence of times so that
property (b) holds with uniform drift parameters for a set of positive measure of initial positions $x\in \T$. 
  {
\begin{Main}
\label{ThFlMV2} For any Liouville $\alpha$, there exists a dense $G_\delta$ set 
$\mathcal R'=\mathcal R'(\a) \subset \cP$ with the following property.
%the walk $(Z_t)_{t\in \N}$ defined by  \eqref{DefMCZ} with $\a$ and $\fp \in \mathcal R'$, satisfies the following:

(a) There exists a strictly increasing sequence of integers $(u_n)_{n\in \N}$, and sequences $(\sigma_n)_{n\in \N}$, $(\eps_n)_{n\in \N}$ with $\eps_n \to 0$, such that for any $\fp\in \mathcal R'(\a)$, for any $x\in \T$,  
the  quasi-periodic random walk
$(Z_t)_{t\in \N}$ as in \eqref{DefMCZ} in the environment defined by 
$(\fp,\alpha, x)$ 
satisfies for each $T=u_n$ and any $z\in \R$:
\begin{equation} \label{eq.simpl} 
\left| \Prob_x(Z_T <  \sigma_n \sqrt{T} z) - \Phi(z) \right|
<\eps_n. \end{equation}

\medskip 

(b) There exists a strictly increasing sequence of integers $(v_n)_{n\in \N}$ and two sequences of measurable subsets of $\T$, $({\mathcal I}_{n})_{n\in \N}$, $({\mathcal I'}_{n})_{n\in \N}$ 
with ${\rm Leb}({\mathcal I}_{n})>0.001$ and  ${\rm Leb}({\mathcal I'}_{n})>0.001$,  such that for any $\fp\in \mathcal R'(\a)$, for any $x \in {\mathcal I}_{n}\bigcup {\mathcal I'}_{n}$ 
there exist  $\mu_n(x)$ such that  $\mu_n(x)> v_n^{1-\varepsilon_n}$ if $x\in {\mathcal I}_{n}$ and  $\mu_n(x) < -v_n^{1-\varepsilon_n}$ if $x\in {\mathcal I'}_{n}$, and such that the following holds.

The  quasi-periodic random walk
$(Z_t)_{t\in \N}$ as in \eqref{DefMCZ} in the environment defined by 
$(\fp,\alpha, x)$ 
satisfies for each $T=v_n$ and any $z\in \R$:
%for any such that for any $(x,x') \in {\mathcal I}_{n}\times {\mathcal I'}_{n}$, there exists  $\mu_n(x) > v_n^{1-\varepsilon_n}$, $\mu_n(x') < -v_n^{1-\varepsilon_n}$ satisfying: For $T=v_n$, for  $y\in \{x,x'\}$, for any $z\in \R$, 
\begin{equation}
\label{EqOneSideDr2}
\left| \Prob_x\left( \frac{Z_{T}-\mu_{n}(x)}{\sigma_n(x)} <  z\right) - \Phi(z) \right|
<\varepsilon_n, \quad \left| \frac{\ln\sigma_n(x)}{\ln
    T}-\frac{1}{2}\right|<\varepsilon_n,
\end{equation}
where $\sigma_n(x)=\sqrt{\Var_x(Z_{T})}$.
\medskip

\end{Main}}

{\red Part $(a)$ of Theorem \ref{ThFlMV2} is based on the fact that for a generic $\fp \in \cP$, the quasi-periodic walk $(Z_t)_{t\in \N}$ "simulates" (uniformly in $x\in \T$) a Diophantine walk for a sequence of times. The same phenomenon for asymmetric walks is observed in Theorem \ref{th.asym}. Theorem \ref{Th_B} encloses a similar observation, this time for every $\fp \in \cP$ and for a generic set of $\a\in \R$.}

As a byproduct of our analysis, we will show that the existence of an
absolutely continuous stationary  measure (see \S \ref{SSIM} for the precise definition) is incompatible with the erratic behavior of Theorem~\ref{ThFlMV}.

\begin{Mainc}
\label{CrNoIM}
If $\alpha$ and $\fp$ are as in Theorem \ref{ThFlMV}, then the process $(X_t)_{t\in \N}$ defined by  \eqref{DefMC_X} has no absolutely continuous stationary measure on $\T$. 
\end{Mainc}
The proof of Corollary \ref{CrNoIM} will be given in \S
\ref{SSNoIM}. Its statement makes a sharp 
contrast with the behaviour for Diophantine walks, described in Proposition 
\ref{PrIM}.
%{\bgreen The definition of stationary measures should come
%  before... And probably, Proposition \ref{PrIM}, as well}

%As in \S \ref{SSGen}, 
Theorems \ref{ThFlMV} and \ref{ThFlMV2}
imply that the walk does not satisfy any limit theorems.
Let us give a more precise statement, with Definition \ref{def.limits} in mind.

{ \begin{Mainc} \label{cor_nolimit}  If $\alpha$ and $\fp$ are as in Theorem \ref{ThFlMV}, then for $x$ in a set of full measure
the walk  $(Z_t)_{t\in \N}$ defined by  \eqref{DefMCZ} in the quasi-periodic environment defined by  
$(\fp,\alpha, x)$
has no quenched limit theorem at $x$.

{\red If $\alpha$ and $\fp$ are as in Theorem \ref{ThFlMV2},
then the walk $(Z_t)_{t\in \N}$ defined as in  \eqref{DefMCZ}  has no annealed limit theorem.}
\end{Mainc}

\begin{proof} 
We start with the absence of quenched limit theorems.  %The proof is similar to the proof of Corollary \ref{CrGenNoLT}. 
Consider $x$ for which  (b) and (c) of  Theorem \ref{ThFlMV} hold. 
It follows from (b) that if a quenched limit theorem holds, then the
limit distribution should be normal. 

On the other hand, 
let $t_n$ be as in (c). Then, since $b_n(x)+b'_n(x) \geq
0.6t_n^{\frac{1}{5}}$, the normalization has to be at least of order
$t_n^{\frac{1}{5}}$. Indeed, if the normalization was negligible compared to $t_n^{\frac{1}{5}}$ for infinitely many $n$, then the limit distribution would have to give weight larger than $0.1$ to segments that accumulate at infinity, which is impossible. But if the normalization is comparable to  $t_n^{\frac{1}{5}}$ or larger, then the limit distribution should give a probability 
larger than $0.1$ to two intervals, each one having  
size at most $\eps_n $.
This implies that any limit point along the sequence $t_n$ 
has non-trivial atoms, so it cannot be normal, giving
a contradiction. 

\medskip 
  
To show that the walk has no annealed limit, we use Theorem
\ref{ThFlMV2}. 
From (a) it follows that if an annealed limit theorem holds, 
the limit must be the normal distribution. 

On the other hand, (b) forces the normalization of any candidate annealed limit theorem to be
of order at least $v_n^{1-\eps}$ for the time $v_n$. Indeed, if the normalization was negligible compared to 
$v_n^{1-\eps}$ for infinitely many $n$, then by \eqref{EqOneSideDr2} the limit distribution should give  weight larger than $0.001$ to segments that accumulate at infinity, which is impossible.  But if the normalization is comparable to $v_n^{1-\eps}$ or larger, then the CLT \eqref{EqOneSideDr2} would  imply that the annealed
limit distribution gives a positive mass to an interval of 
size $v_n^{-1/2+\eps}$.  
 This implies that any limit point along the sequence $v_n$ 
has non-trivial atoms, giving
a contradiction with (a).  \color{bleu1}  \color{bleu1}  \end{proof} \black \black}

Our next result shows that for a generic Liouville frequency, 
the behavior of the corresponding walk ``simulates'' that of a Diophantine one (described by Theorem \ref{ThAnn}) for long periods of time. 

\begin{Main}\label{Th_B}
There exists a dense $G_\delta$ set 
$\mathcal A \subset \R$ such that for any $(\alpha, \fp)\in
\mathcal A \times\cP$  there exist sequences $(\sigma_n)_{n\in \N}$ 
and $(T_n)_{n\in \N}$, $T_n \to \infty$, such that for any $x \in \T$
the quasi-periodic random walk 
$(Z_t)_{t\in \N}$ as in  \eqref{DefMCZ} in the environment defined by 
$(\alpha, \fp, x)$ 
satisfies for any $z\in \R$:

\begin{equation} \label{eqdiopha} 
\left| \Prob_x(Z_t <  \sigma_n \sqrt{t} z) - \Phi(z) \right|
<\frac{1}{n} \quad \text{for all } \ t\in[ T_n,e^{T_n}],
 \end{equation}
where $\Phi(z)$ is the normal distribution given by \eqref{eq.normal}.
\end{Main}

{Consider $t$ of order $T_n$. Relation
  \eqref{eqdiopha} implies that  $\sigma_n$ is necessarily larger than
  $1/\sqrt{T_n}$. 
Therefore, when $t$ is of order $e^{T_n}$, we see that the variance is
almost linear in $t$. 
}
\smallskip

Let us turn to the asymmetric quasi-periodic walks. Recall that, by
Theorem~\ref{ThQLT}, a quenched CLT \eqref{eq.asym}  holds 
with
some function $b_t(\cdot)$. Moreover, by Theorem \ref{ThAnn}, in the
Diophantine case there exist
$b_t(\cdot)\equiv t\bv$ such that the annealed limit  \eqref{EqAnn}
holds. To show that no annealed limit theorem holds in the
Liouville case, it suffices to show that the drift function $b_t$  in
the quenched limit theorem  fluctuates
much more than $\sqrt{t}$
when we vary $x$. 

%\begin{Main}\label{th.asym}
%For any Liouville $\alpha$ there exists a dense $G_\delta$ set
%$S \subset \cP^c$ with the following property: for any $\fp\in S$ there is a
%sequence $\{n_n\}$ and two measurable sets $\cA_n, \cB_n \subset \T$ with
%$\mu(\cA_n)>\frac{1}{10}$ and $\mu(\cB_n)>\frac{1}{10}$ such that
%$$
%\forall (x,y)\in \cA_n \times \cB_n : b_{n_n}(y)\geq
%b_{n_n}(x)+n_n^{0.9}.
%$$
%As a consequence, the annealed limit \eqref{EqAnn}
%does not hold for the pair $(\a,\fp)$.
%\end{Main}
{
 \begin{Main}\label{th.asym}
For any Liouville $\alpha$ there exists a dense $G_\delta$ set 
$\mathcal D (\alpha) \subset \cP^c$, such that the walk $(Z_t)_{t\in \N}$ defined by
\eqref{DefMCZ} 
with $\fp \in \mathcal D (\alpha)$, satisfies the following: 

(a)  There exists a sequence of integers $(s_n)_{n\in \N}$ 
and sequences $(b_n)_{n\in \N}$, $(\sigma_n)_{n\in \N}$,
$(\eps_n)_{n\in \N}$ with $\eps_n \to 0$, 
such that for $T=s_n$ we have for any $x\in \T$ and any $z\in \R$:
\begin{equation} \label{eq.simpl2}
\left| \Prob_x(Z_T -b_n <  \sigma_n \sqrt{t} z) - \Phi(z) \right|
<\eps_n.
\end{equation} 

\medskip

(b)  There exists a 
sequence of integers $(t_n)_{n\in \N}$, and sequences $(\cJ_n)_{n\in
  \N}$ and $(\cJ'_n)_{n\in \N}$ of measurable subsets of $\T$, 
such that the drift coefficients $b_{t_n}(\cdot)$ and $b_{s_n}(\cdot)$ of the quenched CLT \eqref{eq.asym}  satisfy: 
\begin{itemize}
\item[$(i)$] $\mu(\cJ_n)>0.8$ and $\mu(\cJ'_n)>0.1$;
\item[$(ii)$] For $x\in \cJ_n$ we have $|b_{{t_n}}(x)-b_n|<{t_n}^{1/4}$, and
  for $x\in \cJ'_n$ we have $b_{{t_n}}(x)> b_n+{t_n}^{0.9}$.
  \end{itemize}

\ignore{There exist
sequences $(t_n)_{n\in \N}$, $(s_n)_{n\in \N}$, $(b_n)_{n\in \N}$, $(b'_n)_{n\in \N}$, $s_n,t_n \to \infty$, $b_n,b'_n>0$, and sequences $(\cA_n)_{n\in \N}$, $(\cB_n)_{n\in \N}$, $(\cA'_n)_{n\in \N}$, $(\cB'_n)_{n\in \N}$ of measurable subsets of $\T$, such that the drift coefficients $b_{t_n}(\cdot)$ and $b_{s_n}(\cdot)$ of the quenched CLT \eqref{eq.asym}  satisfies 
\begin{itemize}
\item[$(a)$] $\mu(\cA_n)>0.8$ and $\mu(\cB_n)>0.1.$
\item[$(b)$] For $x\in \cA_n$, $|b_{{t_n}}(x)-b_n|<{t_n}^{1/4}$, and
  for $x\in \cB_n$, $b_{{t_n}}(x)> b_n+{t_n}^{0.9}$.
  \item[$(a')$] $\mu(\cA'_n)>0.4$ and $\mu( \cB'_n)>0.4.$
\item[$(b')$] For $x\in \cA'_n$, $|b_{{s_n}}(x)-b'_n|<{s_n}^{1/4}$, and
  for $x\in \cB'_n$, $b_{{s_n}}(x)> b'_n+{s_n}^{0.9}$.
\end{itemize}}
\end{Main}}

As a consequence we get:
\begin{Mainc}\label{cor.asym} For  $\alpha$ and $\fp$ as in Theorem \ref{th.asym}, the walk $(Z_t)_{t\in \N}$ defined by  \eqref{DefMCZ}  has no annealed limit.
\end{Mainc}

{  \begin{proof} If an annealed limit as in  \eqref{EqAnn}
holds, then by (a) the limit must be the normal distribution. On the other hand, properties $(i)$ and $(ii)$ of (b)  require that the normalizing factor at time $t_n$  should satisfy $\sigma_{t_n}
> t_n^{0.9}$. 
But then we get, again from  $(i)$ and $(ii)$, that the limit distribution must give a mass larger than $0.8$
to some point on the line, a contradiction with (a).  
\color{bleu1}  \end{proof} \black}
%{

\subsection{Erratic behavior for random walks in generic deterministic elliptic environments.} \label{SSGen}

In this section,  we consider  random walks on $\Z$ in a fixed
{\it generic} deterministic elliptic environment $(\sp(j))_{j\in \Z}$, that we define as follows.

{ 
\begin{definition}[Deterministic elliptic environments and generic walks]\label{Def_set_E}
For any $\eps \in (0,\frac{1}{2})$, we define $\cE_\eps=\{\sp:\Z\to [\eps, 1-\eps]\}$. We call 
\begin{equation}
\label{set_E}
\cE=\bigcup_{\eps>0} \cE_\eps 
\end{equation}
the set of  {\it deterministic elliptic environments}.
For every $\eps>0$, we endow $\cE_\eps$ with the product topology generated by the sets of the form
$$
W_{\delta, K}(\bar \sp)=
\{\sp: |\sp(n)-\bar \sp(n)|<\delta \text{ for }
|n|\leq K\}.
$$
A subset $\bar{\mathcal E}$ of $\cE$ is called generic if 
for each $\eps$, $\bar{\mathcal E}\cap \cE_\eps$
contains a
countable intersection of open dense sets. 
%We consider a union of such generic sets $\cup_\eps>0 \mathcal R_\eps$ and call the elements of this union a {\it generic deterministic elliptic environment}.  
\smallskip 

For $\sp \in \cE$, we let $(\barZ_t)_{t\in \N}$  be the random walk on $\Z$
 given by $\barZ_0=0$ and  
\begin{equation}
\label{DefMCbarZ}
\Prob(\barZ_{t+1}=k+1|\barZ_t=k)=\sp_k, \quad 
\Prob(\barZ_{t+1}=k-1|\barZ_t=k)=\sq_k. 
\end{equation}
{\red We say that a property is satisfied by a generic random walk on $\Z$ if it is satisfied by the walks \eqref{DefMCbarZ} for $\sp$ in a generic subset of $\cE$. }
\end{definition}
%By a generic environment we mean  a dense $G_\delta$ set in the space of
%all elliptic nearest neighbor Markov chains on $\Z$ endowed with a product topology.

In the definition of generic sets of environments we are allowed to perturb any given environment outside of a finite set, so we
can directly prescribe the asymptotic behavior of the potential to enforce a desirable behavior of the walk.
Hence the erratic behavior similar to the Liouville walks can be observed for random walks in generic deterministic elliptic environments. 
The results are even stronger in the latter case and the proofs are much easier.  
Since we could not find these results in the literature, we give
the proofs in the appendix.

% because 
The criteria that yield erratic behavior of generic walks that are included in the appendix are a source of inspiration for the criteria that we use in the Liouville walks context. However, the latter criteria
must be more sophisticated and tailored in a way that makes it
convenient to verify their validity for  quasi-periodic Liouville walks.

}

%On the other hand we will see below that the random walk in {\it generic} environments exhibit
%very erratic behavior

\begin{theorem}
\label{ThGenErr}
There exist a generic set $\bar \cE \subset \cE$ such that  
for each $\sp  \in \bar \cE$, the following properties are satisfied by the walk $(\barZ_t)_{t\in \N}$ defined in \eqref{DefMCbarZ}:

(a) {\bf (Recurrence)} The walk is recurrent. \medskip

Moreover, there exist strictly increasing sequences $r_k, s_k, t_k$ such that \smallskip

(b) {\bf (Localization)} 
For $T=r_k$

\begin{equation}
\label{EqGenLoc}
\Prob\left(|\barZ_{T}|>(\ln T)^2\right) <  T^{-1/2} \text{ and } \ 
\Var(\barZ_{T})< 2(\ln T)^{4}.
\end{equation}

(c) {\bf (One-sided drift)} \ For $T=s_n$ and some  $\mu_n, \sigma_n$
such that 
$$\liminf_{n\to\infty} \frac{\mu_n(x)}{s_n}>0, \quad
\liminf_{n\to\infty} \frac{\sigma_n(x)}{\sqrt{s_n}}>0$$
we have that for all $z$

\begin{equation}
\Prob\left( \frac{\barZ_{T}-\mu_{n}}{\sigma_n(x)} <  z\right) =\Phi(z).
\end{equation}

(d) {\bf (Two-sided drift)} For $T=t_n$ there exists $b_n$ and $\eps_n$ such that 
$\DS \liminf_{n\to\infty} \frac{b_n}{t_n}>0,$ $\DS \lim_{n\to\infty}\eps_n=0$ and

\begin{equation}
\label{EqGenTwoSideDr}
\Prob \left( |\barZ_T - b_n|<\eps_n b_n\right)>0.1, \quad   
\Prob \left( |\barZ_T + b_n|<\eps_n b_n \right) >0.1.
\end{equation}
\end{theorem}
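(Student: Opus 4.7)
The argument is a Baire category construction based on the potential $\Sigma$ defined by \eqref{Pot}. Fix $\eps\in(0,1/2)$ and work inside $\cE_\eps$ with its product topology. The key observation is that for any $\sp\in\cE_\eps$ and any finite window $[-N,N]$, the values of $\sp$ outside that window can be replaced by any sequence with range in $[\eps,1-\eps]$ without leaving $\cE_\eps$. Hence any criterion on $\Sigma$ phrased as a shape on some distant interval is an open condition in $\cE_\eps$ that can be produced by an arbitrarily small perturbation of any prescribed environment. This is the deterministic analogue, and a much simpler one, of the Liouville perturbation scheme of Section \ref{ScConstr}.

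For each scale $N$ I would formulate four open shape conditions on $\Sigma$, located on disjoint intervals so that they do not interfere. \emph{Localization} requires $\Sigma$ to have a well of depth at least $5\ln N$ centered at $0$ with shoulders of width $O(\ln N)$; \emph{one-sided drift} requires $\Sigma$ to decrease with slope in $[\kappa_1,\kappa_2]$ on a long interval $[0,cN]$; \emph{two-sided drift} requires two wells of depth $\gg\ln N$ centered at positions $\pm b$ with $b\asymp N$, joined at the origin by a small hill whose height is tuned so that the gambler's ruin probability of escaping to each well is close to $1/2$; \emph{recurrence} requires the existence, on each side of $0$, of an interval of length $N$ on which $\Sigma\ge 0$, which is sufficient to force the divergence of both $\sum_{n\ge 1}e^{\Sigma(n)}$ and $\sum_{n\le -1}e^{\Sigma(n)}$, the classical birth--death recurrence criterion. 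Each of these is an open condition on finitely many coordinates of $\sp$ and can be forced by perturbation of $\sp$ on a sufficiently large window, hence each of the corresponding sets $\bigcup_{N\ge N_0}\{\sp:\text{shape condition at scale }N\}$ is open and dense in $\cE_\eps$. Taking countable intersections over $N_0$ and diagonalizing over the four types of conditions at mutually disjoint scales yields a dense $G_\delta$ set in $\cE_\eps$; the union over a sequence $\eps_k\downarrow 0$ then gives the desired $\bar\cE\subset\cE$.

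The probabilistic conclusions at each scale follow from the shape of $\Sigma$ via the martingale \eqref{EqMart}. For (b), the well depth gives the walker an exit time from $[-(\ln T)^2,(\ln T)^2]$ of order at least $N^5\gg T^{1/2}$, so by the standard birth--death maximal estimate $\Prob(\max_{t\le T}|\barZ_t|>(\ln T)^2)\le T^{-1/2}$, and integration of the tail yields the variance bound. For (c), the linear descent of $\Sigma$ at bounded slope makes the hitting times $\tau_k=\min\{t:\barZ_t=k\}$ behave as a sum of near-independent summands with bounded variance, so the usual CLT for $\tau_{[vT]}$ inverts to the stated CLT for $\barZ_T$ with linear drift and $\sqrt{T}$ scaling. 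For (d), the gambler's ruin formula applied to the two wells gives each capture probability close to $1/2$, and the well depth forces $\barZ_T$ to concentrate within $\eps_n b_n$ of the well bottom once captured.

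The main obstacle is the quantitative passage from the potential profile to sharp probabilistic estimates: in particular the tail bound $T^{-1/2}$ and the variance bound $2(\ln T)^4$ in (b) require controlling not only the exit time but the distribution of $\barZ_T$ conditional on non-exit; the lower bound $0.1$ in (d) requires ruling out that the walk has leaked out of its captured well by time $t_n$; and the simultaneous interlacing of the four types of special intervals at mutually disjoint scales must be engineered so that what happens at one scale does not spoil another, nor the recurrence condition. None of these are conceptually difficult once the shape conditions are chosen, but it is the careful bookkeeping for the birth--death estimates that justifies relegating the proof to the appendix.
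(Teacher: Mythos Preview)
Your Baire-category skeleton is correct and matches the paper's strategy, but your realization of each open-dense condition is considerably more complicated than needed, and this extra complexity creates the very ``obstacles'' you worry about at the end. The paper does not build elaborate potential profiles (wells of prescribed depth, slopes in a given range, hills tuned for gambler's ruin). Instead it simply redefines $\sp$ to be \emph{constant} outside a large window: for (a) and (b) one takes $\tilde\sp(n)=\tfrac13$ for $n>K$ and $\tfrac23$ for $n<-K$, so $|M(n)|\ge C_1e^{C_2|n|}$ and both recurrence and the localization bound \eqref{EqGenLoc} follow immediately; for (c) one takes $\tilde\sp(n)=\tfrac23$ for $|n|>K$, so after a finite transient the walk is a simple biased walk with drift $\tfrac13$ and variance $\tfrac89$, and the CLT is the classical one---no hitting-time CLT or inversion is needed; for (d) one first balances $\Sigma$ on $[-K_1,K_1]$, then sets $\tilde\sp=\tfrac12$ on $[K_1,K_2]$ and finally $\tilde\sp=\tfrac13$ on $(-\infty,-K_2)$, $\tilde\sp=\tfrac23$ on $(K_2,\infty)$, so that $|M_-|/|M_+|\approx 1$ and \eqref{EscRight} gives probability close to $\tfrac12$ of escaping to each side, after which the walk again behaves like a biased simple walk.

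Your concern about ``interlacing the four types of special intervals at mutually disjoint scales so that one does not spoil another'' is also unnecessary. Each of the four conditions is separately open and dense in $\cE_\eps$, and the Baire category theorem delivers the intersection automatically; there is no need to engineer a single environment realizing all four behaviors simultaneously by hand. Dropping this bookkeeping, and replacing your shape conditions by the constant-value modifications above, the ``quantitative passage'' you flag as the main obstacle disappears: everything reduces to elementary facts about simple biased random walks.
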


As mentioned above, the proof of this theorem %is not difficult. %We will give its proof  in 
%Section \ref{ScGeneric}, since generic environments provide a useful comparison with Liouville
%environments. In particular,
involves similar ideas, albeit simpler, as the ones  used to prove the analogous results 
for Liouville  walks. Therefore some readers may prefer to go through the proofs of Theorem \ref{ThGenErr} before
going over the proofs of the results for the Liouville walks.

We note several interesting consequences of Theorem \ref{ThGenErr}.

We say that the walk satisfies a limit theorem if there 
exist sequences $(b_t)_{t\in \N}$ and $(\sigma_t)_{t\in \N}$ and a proper 
(that is, not concentrated on a single point)
distribution $\cD(\cdot)$ such that for any $z\in \R$
\begin{equation*} 
\lim_{t\to\infty} \Prob (\barZ_t-b_t <  \sigma_t z)=\cD(z). 
\end{equation*}

\begin{corollary} There exist a generic set $\bar \cE \subset \cE$ such that  
for each $\sp  \in \bar \cE$, the following properties are satisfied by the walk $(\barZ_t)_{t\in \N}$:

\label{CrGenNoLT}  
  (a) $\DS \liminf_{t\to \infty} \frac{\ln |\EXP(\barZ_t)|}{\ln t}=0, $
$  \DS \limsup_{t\to \infty} \frac{\ln |\EXP(\barZ_t)|}{\ln t}=1; $
    $$\liminf_{t\to \infty} \frac{\ln (\Var(\barZ_t))}{\ln t}=0, 
  \quad \limsup_{t\to \infty} \frac{\ln (\Var(\barZ_t))}{\ln t}=2. $$
(b) $(\barZ_t)_{t\in \N}$ does {\bf not} satisfy a limit theorem.  
\end{corollary}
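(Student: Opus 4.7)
The plan is to read both statements off Theorem~\ref{ThGenErr}, exploiting the three disjoint subsequences $r_k,s_n,t_n$ that witness, respectively, localization, one-sided drift and two-sided drift. All arguments will also use the trivial universal bounds $|\EXP(\barZ_t)|\leq t$ and $\Var(\barZ_t)\leq t^2$ coming from $|\barZ_t|\leq t$. These already give that the $\limsup$ of the log-mean is at most $1$, the $\limsup$ of the log-variance is at most $2$, and both liminfs are at least $0$.

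For part (a) the matching extremal bounds come from the four substatements of Theorem~\ref{ThGenErr}. The $\liminf$ of the log-variance is immediate from localization: $\Var(\barZ_{r_k})<2(\ln r_k)^4$. The $\limsup$ of the log-variance uses the two-sided drift via the elementary two-point lower bound on variance: if a random variable takes values in two disjoint intervals of probabilities $p_1,p_2$ separated by a distance $\Delta$, then its variance is at least $p_1p_2\Delta^2/(p_1+p_2)$, so (d) yields $\Var(\barZ_{t_n})\gtrsim b_n^2\gtrsim t_n^2$. The $\limsup$ of the log-mean uses the one-sided drift: since $\mu_n/\sigma_n\to\infty$, a standard Gaussian truncation combining the pointwise CLT in (c) with the trivial bound $|\barZ_{s_n}|\leq s_n$ yields $\EXP(\barZ_{s_n})\gtrsim s_n$. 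The subtlest bound is the $\liminf$ of the log-mean: I plan to pair the tail estimate and variance bound in (b) via Chebyshev, as follows. If $|\EXP(\barZ_{r_k})|>3(\ln r_k)^2$, then the localization event $\{|\barZ_{r_k}|\leq(\ln r_k)^2\}$, which has probability $\geq 1-r_k^{-1/2}$, forces $|\barZ_{r_k}-\EXP(\barZ_{r_k})|>2(\ln r_k)^2$, which via Chebyshev contradicts $\Var(\barZ_{r_k})<2(\ln r_k)^4$. Hence $|\EXP(\barZ_{r_k})|\leq 3(\ln r_k)^2$.

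For part (b) I argue by contradiction: assume that $(\barZ_t-b_t)/\sigma_t$ converges weakly to some proper distribution $\cD$. Restricting to $t=s_n$ and comparing with the Gaussian limit in (c), the Convergence of Types theorem (Gnedenko) forces $\cD$ to be a non-degenerate normal distribution, in particular atomless. Restricting instead to $t=t_n$, the two intervals of probability $>0.1$ provided by (d) must, after the affine rescaling by $\sigma_{t_n}$, stay bounded (by tightness of $\cD$), hence $\sigma_{t_n}\gtrsim b_n\gtrsim t_n$; then the rescaled interval lengths $\varepsilon_n b_n/\sigma_{t_n}$ go to zero, so $\cD$ would need to carry mass $\geq 0.1$ at each of two distinct points, contradicting atomlessness.

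The main obstacle, and the key idea of part (b), is this two-scale matching: reconciling a Gaussian limit at the scale $s_n$ with an atomic-type limit at the scale $t_n$ under a single normalization $(b_t,\sigma_t)$, for which the Convergence of Types theorem is the essential tool. Among the bounds in (a) the delicate case is the $\liminf$ of the log-mean, where the Chebyshev-plus-localization coupling is required; the other three extremal bounds are more direct.
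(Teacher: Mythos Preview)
Your argument is essentially the paper's: part (b) reproduces exactly the reasoning the paper points to (the proof of Corollary~\ref{cor_nolimit}), and your treatment of part (a) fleshes out the paper's one-line ``follows from (b) and (d)''. In fact you are more careful than the paper here, since from the stated form of (d) alone one cannot read off that $|\EXP(\barZ_{t_n})|$ is of order $t_n$ (the two-sided drift is compatible with a nearly symmetric distribution and small mean); your recourse to the one-sided drift (c) to force $\EXP(\barZ_{s_n})\gtrsim s_n$ is the right fix.

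One genuine slip: your claim that ``both liminfs are at least $0$'' from the trivial bound $|\barZ_t|\leq t$ is wrong for the mean. The bound $|\EXP(\barZ_t)|\leq t$ gives $\limsup\leq 1$, not $\liminf\geq 0$; there is no universal lower bound on $|\EXP(\barZ_t)|$. For the variance your instinct is fine (ellipticity gives $\Var(\barZ_t)\geq 4\eps_0(1-\eps_0)$ via the law of total variance conditioned on $\barZ_{t-1}$), but for the mean your Chebyshev-plus-localization step only yields $\liminf\leq 0$. This matches the paper, whose one-line proof does not address $\liminf\geq 0$ for the mean either; the equality should be read as ``subpolynomial along a subsequence'' rather than as a two-sided bound.
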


\begin{proof}
(a) Follows from parts (b) and (d) of Theorem \ref{ThGenErr}.
%{To prove part (b) we observe that
 % by Theorem \ref{ThGenErr}(c), if a limit theorem holds, 
%the limit should be normal. On the other hand, 
%let $t_n$ be as in Theorem \ref{ThGenErr}(d). 
%In this case the distance between $-b_n$ and $b_n$  grows linearly in $t_n.$ 
%Hence to get a limit distribution along the sequence $t_n$, the 
%normalizing factor should be of order   $t_n$.
%But then the limit distribution should give a probability 
%larger than $0.1$ to two intervals, each one having  size at most $\eps_n $.
%This implies that any limit point along the sequence $t_n$ 
%has non-trivial atoms, so it cannot be normal, giving
%a contradiction.}\footnote{\red We should probably just say :  
Part (b) %of the Corollary 
follows exactly as the absence of quenched limit in corollary \ref{cor_nolimit} follows from Theorem \ref{ThFlMV}.
\color{bleu1}  \end{proof} \black

\subsection{Open questions.}
\label{SSOpen}

We close Section \ref{sec.statements} with two 
open questions  about the Liouville one dimensional  walks.

Comparing Theorem \ref{ThAC} with Corollary  \ref{CrNoIM}  leads to the following natural question. 

\begin{question}
Suppose that the walk is symmetric and $\alpha$ is Liouville. 
{ By compactness, the walk 
has at least one stationary measure.} Is the stationary measure unique? Are ergodic stationary measures mixing?
\end{question}

We also note that the maximal growth exponent for the variance of a generic walk 
obtained in Corollary \ref{CrGenNoLT}(a) is optimal, since the variance
of $Z_T$ is at least $\cO(1)$ and at most $T^2.$ However for Liouville  walks we can only show that the
variance grows along a subsequence at a rate that is  not slower than $T^{1-\eps},$ see the discussion after Theorem 
\ref{Th_B}. We believe that the optimal result should be of the same order as for the generic walks. 
Thus we formulate

\begin{conjecture} For generic quasi-periodic symmetric walks, for almost every $x$
$$ \lim\sup_{T\to\infty} \frac{\ln \Var_x(Z_T)}{\ln T}=2.$$
\end{conjecture}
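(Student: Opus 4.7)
The plan is to strengthen the two-sided drift construction behind part~(c) of Theorem~\ref{ThFlMV} so that the walker is pushed to distances of order $T^{1-\eps_n}$ rather than $T^{1/5}$, with $\eps_n\to 0$. Once this is in place, a direct Chebyshev-type bound yields $\Var_x(Z_T) \geq c\,T^{2(1-\eps_n)}$ along the subsequence, which is the conjecture.

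First I would isolate an abstract potential criterion that refines the one used in Section~\ref{ScAbstPot}. Fix a scale $M=T^{1-\eps}$ and assume that $\Sigma_x(n)$, restricted to $[-M,M]$, has exactly two minima $m^-\in[-M,-M/2]$ and $m^+\in[M/2,M]$ of nearly equal depth, that the barriers separating $0$ from each $m^\pm$ are bounded by an absolute constant $C$, and that $\Sigma_x$ exceeds $C+\log T$ on $|n|=M$. Under these hypotheses the walker selects one of the two sides in time polynomial in $M$, descends ballistically into the chosen well, and is trapped there until time $T$, so that $\Prob_x(|Z_T-m^\pm|\leq M/\log M)\approx 1/2$ on each side. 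The proof should follow the martingale and exit-time framework of Sections~\ref{ScExit}--\ref{ScCrit}, with constants tracked more carefully because the relevant wells now sit at distance comparable to $T$ rather than~$T^{1/5}$.

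Second, I would realise this criterion through a Liouville perturbation scheme extending Proposition~\ref{PropCond}. Taking a convergent $p_n/q_n$ of $\alpha$ with $\|q_n\alpha\|$ very small, the environment is almost $q_n$-periodic on scale $q_n$, so the profile of $\Sigma_x$ on $[-q_n,q_n]$ is controlled by a single period of $\fp$ modulo $o(1)$ errors. Inside one period I would prescribe an $O(1)$ bump at the origin followed by a nearly linear descent of very gentle slope $\sim q_n^{-\eps_n}$ down to a single deep minimum near $q_n/2$ and then a sharp return to the mean. The symmetry condition \eqref{eq_sym} automatically generates a mirrored profile on the preceding period, placing a comparable minimum near $-q_n/2$. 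Choosing $T_n = q_n^{1+\delta_n}$ with $\delta_n\to 0$ matches $M=q_n$ to $T_n^{1-\eps_n}$, and since the perturbation of $\fp$ can be made arbitrarily $C^\infty$-small, the criterion is open at each scale. Intersecting the corresponding countable family of open dense subsets of $\cP$ yields the desired residual set.

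The main obstacle, and almost surely the reason the conjecture is still open, is a tension between three competing demands: (i)~the slope must be gentle, of order $q_n^{-\eps_n}$, so the walker can cover distance $q_n$ in the allotted time~$T_n$; (ii)~the minimum must nonetheless be deep, with depth diverging in~$n$, to trap the walker for the remaining steps; (iii)~the global symmetry \eqref{eq_sym} must be preserved while two nearly symmetric wells are created for almost every starting point $x\in\T$. An additional arithmetic complication is that for $x$ in a small exceptional set the discrepancy of the orbit $\{x+j\alpha\}_{j\leq q_n}$ spoils the nearly periodic profile, forcing one to work along a subsequence of denominators where the admissible set of $x$ has measure tending to one. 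The analogue of this obstruction is handled in the $T^{1/5}$-drift construction of the present paper, but it becomes markedly more severe when the drift scale $M$ approaches the orbit length $T$.
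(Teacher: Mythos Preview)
The paper does not prove this statement: it is explicitly a \emph{conjecture}, formulated in \S\ref{SSOpen} (Open questions) and left open. The authors remark that their methods give at best $\Var_x(Z_T)\gtrsim T^{1-\eps}$ along a subsequence (via the two-sided drift at scale $T^{1/5}$ in part~(c) of Theorem~\ref{ThFlMV}, or via the discussion after Theorem~\ref{Th_B}), and state the conjecture as the expected sharp answer by analogy with Corollary~\ref{CrGenNoLT}(a) for generic deterministic environments. There is therefore nothing in the paper to compare your argument against.

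Your proposal is a reasonable research plan rather than a proof, and you yourself identify its gap: the tension between a slope of order $q_n^{-\eps_n}$ (needed so the walker covers distance $q_n$ in time $T_n\approx q_n^{1+\delta_n}$) and a well depth that diverges (needed to trap the walker for the remaining time). With slope $q_n^{-\eps_n}$ over length $q_n$, the depth of the well is $\sim q_n^{1-\eps_n}$, which is fine; but the exit-time analysis of Lemma~\ref{LmExitExp} and the proof of Proposition~\ref{prop.nolimit} give exit times polynomial in the box length \emph{times} $e^A$, where $A$ bounds the local barriers. To have $T_n\approx M^{1+o(1)}$ you need the exit time from $[-M,M]$ to be $M^{1+o(1)}$, which forces $A=o(\ln M)$; yet you also want $\Sigma$ to rise by more than $\log T_n\approx \log M$ beyond the wells to trap the walker. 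Reconciling these bounds, and doing so simultaneously for a full-measure set of $x$ under the symmetry constraint \eqref{eq_sym}, is precisely the unresolved point, and your sketch does not close it.
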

%}

\subsection{Random walks in independent random environments. A brief literature review.}
\label{SSIID}
 A random environment is defined by a sequence  
$\{ \sp_n \}$ of random variables. We can generate it as 
$\sp_n=\fp(\cT^n \omega)$ where $\cT$ is a map of a space $\Omega$
onto itself, and $\fp:\Omega\to [0,1]$ is a 
measurable function. It is usually assumed that $\cT$ preserves a probability measure $\mu$.
The most studied system in this class are iid environments where $\{\sp_n\}$ are independent 
for different $n$.
We consider a random walk in this environment where
the walker moves to the right with probability $\sp_n$ and 
to the left with probability $\sq_n=1-\sp_n$.
The case where $\omega$ is distributed according to $\mu$ is called {\it annealed}, and the case where
$\omega$ is fixed and we wish to obtain the results for $\mu$ almost every $\omega$ is called
{\it quenched}.
Quasi-periodic random walks are examples of random walks in random
environment.

%Quasi-periodic random walks are examples of random walks in random environment where
%the walker moves to the right with probability $\sp_n$ and to the
%left with probability $\sq_n=1-\sp_n$
%and $\sp_n=\fp(\cT^n \omega)$ where $\cT$ is a map of a space $\Omega$, and $\fp:\Omega\to \R$ is a 
%measurable function. It is usually assumed that $\cT$ preserves a probability measure $\mu$.
%The case where $\omega$ is distributed according to $\mu$ is called {\it annealed}, and the case where
%$\omega$ is fixed and we wish to obtain the results for $\mu$ almost every $\omega$ is called
%{\it quenched}.
%The most studied system in this class are iid environments where $\{\sp_n\}$ are independent 
%for different $n$. 

Since there is a vast literature on this subject, we will just briefly discuss the iid
walks here, referring the readers to \cite[Part I]{Z} for more
information.
Let 
$\Delta=\EXP(\ln \sp_n-\ln \sq_n)$. We call the walk {\it symmetric} if $\Delta=0$ and 
{\it asymmetric} otherwise. Let the walk start at the origin, 
and denote by $Z_t$ the position of the walker at time $t$. Here are some results.

According to \cite{Sol}, the walk is recurrent iff it is symmetric. Moreover, if $\Delta>0$, 
 then 
$Z_t\to+\infty$ with probability 1,  and if $\Delta<0$ then $Z_t\to -\infty$ with probability 1.
Surprisingly, in the transient case the walk can escape to infinity
with zero speed or have positive speed and superdiffusive fluctuations.
We also note that, unless the walk satisfies the classical Central Limit Theorem,
there is no quenched limits.\footnote{We refer the readers to \cite{P08, PZ, DG1, ESTZ, PS} for more discussion of the quenched behavior of the walk.}

%${\tiny
%To fix our notation,
%let us suppose that $\Delta>0$. Let $s$ be the unique positive solution of 
%$\EXP((\sq/\sp)^s)=1$ 
%if the solution exists and $s=\infty$ otherwise ($s=\infty$ iff $\sp_n\geq \sq_n$ with
%probability 1). It is shown in \cite{Sol} that $\DS \bv:=\lim_{t\to\infty} \frac{Z_t}{t}$ always
%exists, and $\bv>0$ iff $s>1$. 
%Thus, depending on the parameter $s$,...} 

Thus, the behavior of the random walk in the independent asymmetric
environment can be very different from that of a simple random walk. 
The difference is even more startling in 
the symmetric case where it was shown by Sinai (\cite{S1}) that
$\frac{Z_t}{\ln^2 t}$ converges to
a non-trivial limit (the density of the limit distribution is obtained in \cite{K86}).
The quenched distribution has even stronger localization properties, namely, most of the 
time the walker is localized on the scale $\cO(1)$ 
\cite{G84}. More precisely, given $\eps>0$, we can find an integer 
$N(\eps)$ such that for each $n\in \N$,
for a set of environments $\omega$ of measure more than $1-\eps$, there is a subset 
$\mathfrak{T}_t(\omega)\subset \Z$ of cardinality $N(\eps)$ such that 
$\Prob_\omega(Z_t\in \mathfrak{T}_t(\omega))>1-\eps$.  
This strong localization could be used to show that the symmetric walk does not satisfy 
a quenched limit theorem.
The fact that the fluctuations of the walk in both annealed and quenched case are subpolynomial 
is referred to as {\it Sinai-Golosov localization}.

To understand different behaviors in different regimes in the asymmetric case,
one needs a notion of a {\it trap}. Informally, a trap is a short
segment $I$ where for most of the sites the drift is pointing in the 
direction opposite to the one in which the walker is going. The most convenient way
to do this (\cite{S1}) is in terms of the {\it potential}, defined in
\eqref{Pot}. 
Namely, a segment $I$ is a trap if  the minimal value of the potential 
inside $I$ is much smaller than both boundary values.
The creation of traps  is our main tool for proving the localization of the walker 
in the  Liouville case.

\section{Preliminaries}
\label{ScPrel}
\subsection{Stationary measures.} 
\label{SSIM}
Here we comment on the relation between our findings and the question of the existence of 
absolutely continuous stationary measures for the quasi-periodic walks. We consider a  walk given by a pair $(\a,\fp)$  with $\a \notin \Q$ and $\fp \in C^\infty(\T,(0,1))$. The associated process $(X_t)_{t\in \N}$ has an absolutely continuous stationary measure with density  $\rho(\cdot)$ iff for Haar a.e. $x \in \T$ 
\begin{equation}
\label{EqIM}
\rho(x)=\fp(x-\alpha)\rho(x-\alpha)+\fq(x+\alpha) \rho(x+\alpha). 
\end{equation}
A direct computation shows that the flux
$$ f(x)=\fp(x) \rho(x)-\fq(x+\alpha)\rho(x+\alpha) $$
is constant along the orbit of the rotation and by ergodicity $f(x)\equiv f.$
Now there are two cases 

(I) The walk is not symmetric. We may assume (applying a reflection if necessary)
that 
$$ \int \ln \fp(x) dx>\int \ln \fq(x) dx. $$
In this case we can take (rescaling $\rho$ if necessary)
$f=1$ so that
$$ \rho(x)=\frac{1}{\fp(x)}+\frac{\fq(x+\alpha)}{\fp(x)}\rho(x+\alpha). $$
Iterating further one can obtain following \cite{S2} a smooth solution\footnote{The convergence of the sum defining the solution $\rho$ is guaranteed by the asymmetry condition and the fact that $\a \notin \Q$.} 
\begin{equation}
\label{IMDrift}
\rho(x)=\frac{1}{\fp(x)} \sum_{k=0}^\infty \left(\prod_{j=1}^k
  \frac{\fq(x+j\alpha)}{\fp(x+j\alpha)} \right). 
\end{equation}

(II) The walk is symmetric. In this case using recursive analysis similar to case (I)
one can see that there are no solutions with $f\neq 0$ (see \cite{S2}). In the case $f=0$
the equation reduces to
\begin{equation}
\label{ZeroFlux}
\fp(x) \rho(x)=\fq(x+\alpha) \rho(x+\alpha). 
\end{equation}
Introducing 
$$ g(x)=\fq(x)\rho(x), $$
we see that \eqref{ZeroFlux} reduces to
\begin{equation}
\label{MultCoB}
\frac{\fq(x)}{\fp(x)}=\frac{g(x)}{g(x+\alpha)}. 
\end{equation}
We call  the function $\fp(\cdot)$ such that 
\eqref{MultCoB} has a measurable  solution $g$ a 
{\it (multiplicative) coboundary above $\a$}, and $g$ its  corresponding {\it
  transfer function}\footnote{\red  A coboundary $\fp(\cdot)$ above $\a$ is necessarily symmetric.}. In conclusion, we have the following proposition.

\begin{proposition}
\label{PrIM}
(a) (\cite[Theorem 3.1]{CG}, \cite[Theorem 1.8]{Br1}).
The Markov chain $(X_t)_{t\in \N}$ defined by $(\a,\fp)$ as in \eqref{DefMC_X} has a stationary measure which is absolutely continuous
with respect to the Lebesgue measure iff either the walk is asymmetric or if it is symmetric and $\fp$ is a coboundary above $\a$. 

(b) (\cite[Corollary 6.2]{DG6}) In the case $\a \in \R \setminus \Q$ and $\fp$ is a coboundary, the walk $(Z_t)_{t\in \N}$  defined by $(\a,\fp)$ as in \eqref{DefMCZ} satisfies the CLT.
That is, there exists a constant $D^2>0$ such that for all $x$ all $z$
$$ \Prob_x\left(Z_n\leq  z D\sqrt{n}\right)=\Phi(z). $$
\end{proposition}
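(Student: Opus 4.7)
The plan has two parts. For (a), most of the necessity is already contained in the computation preceding the statement; what remains is to verify the converse implications and then handle (b).

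\textbf{Part (a), necessity.} Assume $\rho\in L^1(\T)$ is a stationary density. From \eqref{EqIM}, the flux $f(x)=\fp(x)\rho(x)-\fq(x+\a)\rho(x+\a)$ is $R_\a$-invariant, hence constant by unique ergodicity of an irrational rotation. If $f\neq 0$, say $f>0$ after the reflection $\fp\leftrightarrow \fq$, then pointwise $\fp(x)\rho(x)>\fq(x+\a)\rho(x+\a)$ on the support of $\rho$; taking logarithms and integrating over $\T$, the $\rho$-terms cancel by translation invariance of Lebesgue, leaving $\int \ln \fp>\int\ln \fq$, so the walk is asymmetric. If $f=0$, \eqref{ZeroFlux} is exactly the multiplicative cohomological equation \eqref{MultCoB} with transfer function $g=\fq\rho$, so $\fp$ is a coboundary.

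\textbf{Part (a), sufficiency.} In the asymmetric case assume (after a reflection) $\int\ln(\fq/\fp)\,dx<0$. Uniform Birkhoff convergence on the uniquely ergodic rotation $R_\a$ applied to $\ln(\fq/\fp)$ gives uniform exponential decay $\prod_{j=1}^k(\fq/\fp)(x+j\a)\le C e^{-ck}$. Differentiating term by term, the exponential decay of the product dominates the at-most-polynomial growth in $k$ of the derivatives of the logarithmic sum, so the series \eqref{IMDrift} converges in $C^\infty(\T)$ to a smooth positive $\rho$; one then verifies \eqref{EqIM} by direct substitution and telescoping. In the symmetric coboundary case, a measurable positive solution $g$ of \eqref{MultCoB} defines $\rho=g/\fq$; then \eqref{MultCoB} gives \eqref{ZeroFlux}, which in turn yields \eqref{EqIM} with $f\equiv 0$, and one normalizes $\rho$ to a probability density.

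\textbf{Part (b).} The crucial point is that when $\fp$ is a smooth symmetric coboundary with transfer function $g$ (which, by a Gottschalk--Hedlund type argument, may be taken continuous and bounded away from $0$), equation \eqref{MultCoB} rewrites $\ln(\fq/\fp)=\ln g-\ln g\circ R_\a$ as a continuous additive coboundary over $R_\a$. Hence the Birkhoff sums of $\ln(\fq/\fp)$ telescope and the potential $\Sigma_x(n)$ of \eqref{Pot} is uniformly bounded in $n\in\Z$ and $x\in\T$. I would then run the standard one-dimensional random-walk analysis via the martingale \eqref{EqMart}: introduce the harmonic function $h_x(n)=\sum_{k=0}^{n-1} e^{\Sigma_x(k)}$ (with the analogous definition for $n<0$); by construction $h_x(Z_t)$ is a $\Prob_x$-martingale, and since $e^{\Sigma_x(\cdot)}$ is uniformly bounded above and below, $h_x$ is bi-Lipschitz comparable to the identity uniformly in $x$. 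Applying the martingale CLT to $h_x(Z_t)$ and pulling back through $h_x^{-1}$ yields
\[
\lim_{n\to\infty}\Prob_x\bigl(Z_n\le zD\sqrt{n}\bigr)=\Phi(z),
\]
with $D^2$ a Green--Kubo-type variance expressible through $\fp$ and $g$, independent of $x$ by the uniformity of the comparison.

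The main obstacle is the sufficiency half of (a) in the asymmetric case: promoting a.s. summability of \eqref{IMDrift} into uniform $C^\infty$ convergence to a strictly positive density requires a quantitative control of Birkhoff sums of $\ln(\fq/\fp)$ and of their derivatives, leveraging both the strict asymmetry \eqref{eq_sym} and the smoothness of $\fp$. Once this density is in hand, (a) follows, and (b) reduces to a classical martingale CLT in the bounded-potential regime, which is precisely the ``Diophantine template'' whose breakdown in the Liouville regime drives the erratic behavior analyzed in the rest of the paper.
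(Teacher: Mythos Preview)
The paper does not give a self-contained proof of this proposition: part (a) is quoted from \cite{CG,Br1} with the flux/recursive computation of \S\ref{SSIM} serving as a sketch, and part (b) is simply cited from \cite{DG6}. Your outline of (a) sufficiency and of (b) is in the same spirit as those references (uniform exponential decay of the products via unique ergodicity for \eqref{IMDrift}; bounded potential plus a martingale CLT for (b)), so there is no real divergence of approach there.

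There is, however, a genuine gap in your necessity argument for (a) in the case $f\neq 0$. You write ``take logarithms and integrate, the $\rho$-terms cancel''. This presupposes that $\ln\rho\in L^1(\T)$, which is not guaranteed for an $L^1$ density $\rho$; nothing you have said rules out $\rho$ vanishing, or being very small, on a set of positive measure. The paper (and \cite{S2}) avoids this by the recursive route you allude to only in the sufficiency direction: from $\fp(x)\rho(x)=f+\fq(x+\alpha)\rho(x+\alpha)$ one iterates to get
\[
\rho(x)=\frac{f}{\fp(x)}\sum_{k\ge 0}\prod_{j=1}^{k}\frac{\fq(x+j\alpha)}{\fp(x+j\alpha)}\;+\;\text{remainder},
\]
and then argues that for a symmetric walk the partial products do not tend to zero (Birkhoff averages of $\ln(\fq/\fp)$ vanish), so the series diverges and no $L^1$ density can exist with $f\neq 0$. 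That is the step you should replace your log-and-integrate argument with. A smaller point in the same vein: in the symmetric sufficiency case you set $\rho=g/\fq$ with $g$ merely a measurable solution of \eqref{MultCoB}; you should say why $g$ can be taken positive and why $g/\fq$ is integrable (in the paper's applications $g$ is in fact smooth, which settles both issues).
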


{\red Observe that for asymmetric walks, the existence of an absolutely continuous stationary measure for  $(X_t)_{t\in \N}$, or equivalently the existence of a measurable solution to \eqref{IMDrift}, is behind the validity of the quenched central limit of Theorem \ref{ThQLT}, proved in \cite{DG4}.}

We denote by $\cB_\al \subset \cP$ the subset of functions $\fp(\cdot)$ such that 
\eqref{MultCoB} has a smooth solution $g$. Such $\fp$ is called smooth {\it (multiplicative) coboundary above $\a$}.

%We are now in position to define the function $b_n(x)$ appearing in Theorem \ref{ThQLT}.
%Namely$$ b_n(x)=\sum_{j=1}^{n} \rho(x+j\alpha) $$ where $\rho$ is defined by \eqref{IMDrift}.
For every $x\in\T$, denote $\Sigma_x(0)=0$,
\begin{equation} \label{eq.sigma.x} 
\Sigma_x(n)=
\begin{cases}
\sum_{j=1}^{n} \ln \fq(x+j\alpha)-\ln\fp(x+j\alpha), &  n\geq 1, \\
\sum_{j=n+1}^{0} \ln \fp(x+j\alpha) -\ln \fq(x+j\alpha), &  n\leq -1.
\end{cases}
\end{equation}
Notice that if  $\fp\in \cB_\al$, then $\Sigma_x(n)$ has an easy expression
$$
\Sigma_x(n)=\ln g(x+\a)-\ln g(x+(n+1)\a),
$$ 
where $g$ is as in \eqref{MultCoB}.
This behavior of  $\Sigma_x(n)$, as we will
see in \S \ref{subsec_martingale}, renders the walk very similar
to the simple random walk.
%{\blue In fact, existence of an absolutely continuous stationary measure in this case follows from the Central Limit Theorems \ref{ThAnn} and \ref{ThQLT}. 
%I DO NOT UNDERSTAND THIS SENTENCE.}

In the symmetric case of  Theorem \ref{ThAnn}, a crucial observation
is that for $\a$ Diophantine, any smooth $\fp \in \cP$ is a smooth coboundary
above $\a$. As explained in the introduction, the fact that for a Liouville $\a$, the generic function $\fp \in \cP$ displays very different behaviors of
$\Sigma_x(n)$ at different time scales $n$ and different initial
condition  $x$ is the key behind the phenomena described in Theorems
\ref{ThFlMV}, \ref{Th_B}, and \ref{th.asym} and 
Corollaries \ref {CrNoIM}, \ref{cor_nolimit} and~\ref{cor.asym}.

%%%%%%%%%%%%%%%%%%%%%%%%%%%%%%%%%%%%%%%%%%%%%%%%%%%%%
%%%%%%%%%%%%%%%%%%%%%%%%%%%%%%%%%%%%%%%%%%%%%%%%%%%%%
%%%%%%%%%%%%%%%%%%%%%%%%%%%%%%%%%%%%%%%%%%%%%%%%%%%%%
%%%%%%%%%%%%%%%%%%%%%%%%%%%%%%%%%%%%%%%%%%%%%%%%%%%%%

\subsection{A fundamental martingale.} \label{sec.mart}

{ Until the end of this section we will work with random walks on $\Z$
  in  deterministic elliptic environments as in Definition \ref{Def_set_E}, and
  study the dependence 
of the behavior of such a walk on the potential  defined in \eqref{Pot}.
%{EqSigma} below. 
Notice that for a fixed  triple $(\a,\fp,x)$, the quasi-periodic walk
defined by \eqref{DefMCZ} is equivalent to the walk in a deterministic   
environment defined by $\sp(j)=\fp(x+j\a)$ for all $j\in \Z$.}

{ Recall that we fix a small $\eps_0\leq 0.1$ and let 
\begin{equation}
%\label{set_E}
\cE_{\eps_0}=\{\sp:\Z\to [\eps_0, 1-\eps_0]\}. 
\end{equation}
For $\sp\in \cE_{\eps_0}$, let $\sq(\cdot)=1- \sp(\cdot).$
Consider  the nearest-neighbor random walk $(Z_t)_{t\in \N}$ on $\Z $
defined by the transition probabilities $Z(0)=0$,
%\footnote{When we consider quasi-periodic walks defined by a pair $(\a,\fp)$ as in \eqref{DefMCZ}, for each fixed $x$, we will get an environment defined by $\sp(j)=\fp(x+j\a)$ for all $j\in \Z$.} 
\begin{equation}
\label{def_walk_Z}
\Prob(Z_{t+1}=j+1|Z_t=j)=\sp(j), \quad 
\Prob(Z_{t+1}=j-1|Z_t=j)=\sq(j). 
\end{equation}
}

Denote $\Sigma(0)=0$ and
\begin{equation}
\label{EqSigma}
\Sigma(n)=
\begin{cases}
\DS \sum_{j=1}^{n} \ln \sq(j)-\ln\sp(j), &  n\geq 1, \\
\DS \sum_{j=n+1}^{0} \ln \sp(j) -\ln \sq(j), &  n\leq -1.
\end{cases}
\end{equation}
For $j<k$, we use the notation 
$$
\Sigma(j,k):= \Sigma(k)-\Sigma(j).
$$

\label{subsec_martingale} Denote $M(0)=0$, $M(1)=1$, 
\begin{equation}
\label{EqMart}
M(n)=
\begin{cases}
\DS 1+\sum_{k=1}^{n-1} \prod_{j=1}^{k} \frac{\sq(j)}{\sp(j)}, & n\geq 2, \\
\DS -\sum_{k=n+1}^{0} \prod_{j=k}^{0} \frac{\sp(j)}{\sq(j)}, &  n\leq -1.
\end{cases}
\end{equation}
Notice that 
$$
M(n)=
\begin{cases}
\DS \sum_{j=0}^{n-1} e^{\Sigma(j)}, &  n\geq 1, \\
\DS -\sum_{j=n}^{-1} e^{\Sigma(j)}, &  n\leq -1.
\end{cases}
$$

It is straightforward that $M(Z_t)$ is a martingale under $\Prob$. 
The optional stopping theorem for the martingales implies that 
if $Z_{t_0}=n$ with $a<n<b$,
and if $\tau$ is the first time such that $Z_{t_0+\tau}$ the walk reaches either $a$ or $b$,  then 
$M (Z_{\min(|t_0+t|,|t_0+ \tau|)})$ is a martingale under $\Prob$. 
In particular for any $a<n<b$
\begin{equation}
\label{LeftRight}
\Prob (Z_t\text{ reaches }b \text{ before } a|Z_0=n)=\frac{M(n)-M(a)}{M(b)-M(a)}.
\end{equation}
(see e.g. \cite[Theorem 6.4.6]{Du}). 

This formula provides a relation between the sums $\Sigma(j,k)$ and the
behavior of the walk.

We note that \eqref{LeftRight} also holds if $a=-\infty$ or $b=+\infty$ (see e.g. \cite[\S VII.3]{RY}).
In particular, 
\begin{equation}
\label{Rec}
Z_t\text{ is recurrent} \ \Leftrightarrow \ \lim_{n\to-\infty} M(n)=-\infty\text{ and }
 \lim_{n\to+\infty} M(n)=+\infty,
\end{equation}
and 
\begin{equation}
\label{EscRight}
\Prob(\lim_{t\to +\infty} Z_t=+\infty)=\frac{M_-}{M_-+M_+} \ \text{ where } \ M_\pm=\lim_{n\to\pm\infty} |M(n)|.
\end{equation}

\section{Random walks in a deterministic aperiodic medium. Diffusion and localization via 
optional stopping}
\label{ScAbstPot}

For a fixed $\eps_0\leq 0.1$ and $\cE$ as in \eqref{set_E}, fix $\sp\in \cE$ and consider  the  random walk $(Z_t)_{t\in \N}$  defined by \eqref{def_walk_Z}.

In this section we present criteria involving the sums  $\Sigma(n)$ defined in \eqref{EqSigma},
that will be used to guarantee the different behaviors in Theorem A.

Proposition \ref{prop.localization} below gives a criterion for
localization, 
Proposition \ref{prop.drift} for one-sided drift, and  
Proposition \ref{prop.nolimit} for two-sided drift.
The proofs of these propositions will be given in Section \ref{ScCrit}.

\subsection{Localization criterion.} We say that {\bf $\sp$ satisfies
condition $\mathcal C_1( N)$ }
if the following two inequalities hold:
\begin{align*}
\Sigma(N) > N^{1/2}  \tag{$\mathcal C_1a_+ $} ,\\
 \Sigma(-N) > N^{1/2} \tag{$\mathcal C_1a_- $}.
\end{align*}
%where $\Sigma(N)$ is defined in (\ref{EqSigma}).

\begin{proposition} \label{prop.localization} If $\sp$ satisfies
condition $\mathcal C_1(N)$,  then for $T=e^{\sqrt{N}/4}$ we have  

\begin{equation} \label{eq.localization}
\tag{{\bf Localization}} 
\Prob \left( \max_{t\leq T}|Z_{T}|>16(\ln T)^2 \right) <T^{-2}
      \text{ and } \
      \Var(Z_{T})< 300 (\ln T)^4 . 
%\Prob \left(|Z_{T}|>(\ln T)^2\right) <  T^{-1/2} \text{ and } \ 
%\Var(Z_{T})<  300 (\ln T)^4 .
\end{equation}
\end{proposition}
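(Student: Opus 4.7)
The strategy is to exploit condition $\mathcal C_1(N)$ as a potential barrier: the inequalities $\Sigma(\pm N)>\sqrt N$ force the martingale $M$ to grow exponentially between $0$ and $\pm N$, so every excursion of $Z$ from $0$ has probability only $\sim e^{-\sqrt N}$ to reach $\pm N$. Since $T=e^{\sqrt N/4}$ yields $16(\ln T)^2=N$, the announced localization scale coincides with $N$, and the time horizon $T$ is far too short for such rare excursions to accumulate.

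\smallskip

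\emph{Step 1 (per-excursion escape probability).} Set $C=\ln((1-\eps_0)/\eps_0)$, so that $|\Sigma(j)-\Sigma(j-1)|\leq C$ for every $j$. Applying \eqref{LeftRight} from $Z_0=1$ gives
\[
\Prob\bigl(Z\text{ reaches }N\text{ before }0\mid Z_0=1\bigr)=\frac{M(1)}{M(N)}=\frac{1}{M(N)},\qquad M(N)=\sum_{j=0}^{N-1}e^{\Sigma(j)}.
\]
Condition $(\mathcal C_1a_+)$ gives $\Sigma(N-1)\geq\Sigma(N)-C>\sqrt N-C$, hence $M(N)\geq e^{\sqrt N-C}$, and the probability above is $\leq e^{C-\sqrt N}$. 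A symmetric computation, using $|M(-N)|=\sum_{j=-N}^{-1}e^{\Sigma(j)}\geq e^{\Sigma(-N)}>e^{\sqrt N}$ together with $e^{\Sigma(-1)}\leq e^{C}$, yields the same bound for the probability of reaching $-N$ before $0$ starting from $Z_0=-1$. Averaging over the first step out of $0$, a single excursion of the walk (terminating either back at $0$ or at $\pm N$) has escape probability $\leq e^{C-\sqrt N}$.

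\smallskip

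\emph{Step 2 (union bound and variance).} Let $T_0=0$ and $T_{k+1}=\inf\{t>T_k:Z_t=0\text{ or }|Z_t|\geq N\}$. Since $T_{k+1}-T_k\geq 2$, at most $T/2$ excursions are completed by time $T$. By the strong Markov property at the $T_k$'s, the escape events of distinct excursions are i.i.d., and a union bound gives
\[
\Prob\!\left(\max_{t\leq T}|Z_t|\geq N\right)\leq \frac{T}{2}\,e^{C-\sqrt N}=\frac{1}{2}\,e^{-3\sqrt N/4+C},
\]
which is $\leq T^{-2}=e^{-\sqrt N/2}$ as soon as $N$ is large enough in terms of $\eps_0$. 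Since $16(\ln T)^2=N$, this is the first inequality in \eqref{eq.localization}. For the variance, split $\EXP(Z_T^2)$ according to whether $|Z_T|\leq 16(\ln T)^2$, and use the trivial bound $|Z_T|\leq T$ with the tail estimate just obtained:
\[
\Var(Z_T)\leq\EXP(Z_T^2)\leq\bigl(16(\ln T)^2\bigr)^2+T^2\cdot T^{-2}\leq 256(\ln T)^4+1\leq 300(\ln T)^4.
\]

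\smallskip

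\emph{Where is the difficulty?} No step is genuinely hard; the proof is essentially a single application of the martingale identity \eqref{LeftRight} combined with an excursion decomposition. The only point that merits attention is that $\mathcal C_1(N)$ prescribes $\Sigma$ only at $\pm N$ and says nothing about its interior behavior in $[-N,N]$; this is harmless because $M(N)$ and $|M(-N)|$ are sums of positive exponentials, which are bounded below by the single term closest to the endpoint, namely $e^{\Sigma(N-1)}$ and $e^{\Sigma(-N)}$ respectively. The calibration $T=e^{\sqrt N/4}$ is then forced: it is the largest horizon for which $(T/2)\,e^{C-\sqrt N}$ still beats $T^{-2}$, which in turn fixes the localization scale $16(\ln T)^2=N$.
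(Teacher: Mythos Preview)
Your proof is correct and follows essentially the same approach as the paper's: both use the martingale identity \eqref{LeftRight} to bound the per-excursion escape probability by $\sim e^{-\sqrt N}$, then count visits to $0$ (at most $O(T)$) and apply a union bound, and finally split $\EXP(Z_T^2)$ according to the event $\{|Z_T|\le N\}$ to get the variance. If anything, you are slightly more careful than the paper in tracking the ellipticity constant $C$ when lower-bounding $M(N)$ and $|M(-N)|$, and in making explicit that the inequality holds only for $N$ large in terms of $\eps_0$.
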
 

{
Condition $(\C_1)$ means that the origin is a sharp local minimum of the potential,
and as explained in the introduction, it implies that the walker spends a lot of time near the origin.}

\subsection{One-sided drift criterion.}
We say that {\bf $\sp$ satisfies condition $\mathcal C_2(N,\eps)$}
for $\varepsilon >0$ if there exist $A>100$ and  $L$ satisfying
$e^{e^A} < L \leq N^{\varepsilon^2}$, $N\leq e^{L^{0.1}}$, 
such that
the following conditions hold:
\begin{align*}\label{Cond2}
& \quad  \Sigma (-L) > \sqrt{L}; \tag{$\mathcal
                  C_2a $} \\
& \quad \Sigma (k, k') < A \ \ \text{for
                  all } k, k'\in[-N,N],
      \ \ k\leq k' ; \tag{$\mathcal C_2b $} \\
& \quad    |\sp(j + kL)-\sp(j)|<N^{-1/\varepsilon^3} 
\ \ \text{for all } (j,k)\in [0,L-1]\times[-N/L,N/L].  \tag{$\mathcal C_2c $} 
\end{align*}

\begin{proposition} \label{prop.drift} 
If $\sp$ 
satisfies condition  $\mathcal C_2(N, \eps)$ for some $\eps>0$, then 
for  $T=N $,   there exist  $\mu > T^{1-\varepsilon}$ and $\sigma$ with  
$\left|  \frac{\ln\sigma}{\ln T}-\frac{1}{2}\right|<\varepsilon $ such that  for every $z\in \R$

\begin{equation} \label{eq.onesided}
\tag{{\bf One-sided drift}}
\left| \Prob \left( \frac{Z_{T}-\mu }{\sigma } <  z\right) - \Phi(z) \right|
<\varepsilon.
\end{equation} 
\end{proposition}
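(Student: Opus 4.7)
The plan is to exploit the three parts of $\mathcal{C}_2(N,\eps)$ in a complementary way: condition $(\mathcal{C}_2 a)$ produces a steep potential barrier at $-L$ that should confine the walker to $[-L,\infty)$; $(\mathcal{C}_2 b)$ rules out any barrier to the right on $[0,N]$; and $(\mathcal{C}_2 c)$ says the environment on $[-N,N]$ is nearly $L$-periodic, so one can reduce to a truly $L$-periodic walk, for which the CLT with positive speed follows from the standard theory of finite Markov chains on $\Z/L\Z$.

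For left confinement I would apply \eqref{LeftRight} with $a=-L$, $b=N$. From $(\mathcal C_2 b)$ one gets $\Sigma(j)\le A$ on $[0,N]$, hence $M(N)\le N e^A$, while $(\mathcal C_2 a)$ gives $|M(-L)|\ge e^{\Sigma(-L)}\ge e^{\sqrt L}$. Using $e^A\le\ln L$ and $\ln N\le L^{0.1}\ll\sqrt L$,
$$\Prob_0(\tau_{-L}<\tau_N)\le \frac{M(N)}{|M(-L)|}\le N e^{A-\sqrt L}\ll T^{-2}.$$
Since $|Z_t|\le t$, any walker that hits $N$ first needs $\tau_N\ge N=T$ steps, so it cannot subsequently reach $-L$ within $[0,T]$; hence $\Prob(\min_{t\le T}Z_t\le -L)$ is bounded by the same quantity.

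Next I would set up the periodic reduction: let $\tilde\sp$ be the $L$-periodic extension of $\sp|_{[0,L-1]}$ and $\tilde Z_t$ the corresponding walk. By $(\mathcal C_2 c)$, $|\sp(n)-\tilde\sp(n)|\le N^{-1/\eps^3}$ for $|n|\le N$, and since $|Z_t|,|\tilde Z_t|\le T$, a step-by-step maximal coupling would keep $Z_t=\tilde Z_t$ for all $t\le T$ with probability at least $1-TN^{-1/\eps^3}=1-o(1)$. The projection $Y_t=\tilde Z_t\bmod L$ is a finite irreducible Markov chain on $\Z/L\Z$ with invariant measure $\pi$, and the classical functional CLT for additive functionals of finite Markov chains gives
$$\frac{\tilde Z_T-\tilde v T}{\tilde\sigma_0\sqrt T}\;\Rightarrow\;\mathcal N(0,1),\qquad\tilde v=2\EXP_\pi[\tilde\sp]-1,\quad\tilde\sigma_0^2>0.$$
Periodicity of $\tilde\sp$ forces $\tilde\Sigma(L)=-\tilde\Sigma(-L)$, and the coupling gives $\tilde\Sigma(-L)=\Sigma(-L)+O(LN^{-1/\eps^3})>\sqrt L-1$, so $\tilde\Sigma(L)<-\sqrt L+1$. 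Applying the renewal identity $\tilde v=L/\EXP_0[\tau_L^{\tilde Z}]$, the standard hitting-time formula for nearest-neighbor walks, and the bound $|\tilde\Sigma(j)-\tilde\Sigma(k)|\le A+1$ on $[0,L-1]$ from $(\mathcal C_2 b)$, would yield $\tilde v\gtrsim 1/(\sqrt L\, e^A)\ge 1/(\sqrt L\ln L)>N^{-\eps}$, so $\mu:=\tilde v T>T^{1-\eps}$. Moreover $\tilde\sigma_0$ is polynomially bounded in $L=N^{O(\eps^2)}$, which gives $|\ln\sigma/\ln T-1/2|<\eps$ for $\sigma:=\tilde\sigma_0\sqrt T$. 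Transferring the CLT from $\tilde Z$ back to $Z$ via the coupling and confinement estimates would complete the proof of \eqref{eq.onesided}.

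The principal obstacle is the quantitative lower bound on $\tilde v$: the intra-period oscillation $A$ of $\tilde\Sigma$ enters the hitting-time formula through a factor $e^A$, which could in principle dominate the per-period drop $\sqrt L$. The constraint $e^{e^A}<L$ built into $\mathcal C_2$ ensures $\sqrt L\gg e^A$, so the signal dominates the noise and produces a per-step drift of polynomial order in $T$.
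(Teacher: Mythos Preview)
Your overall strategy---reduce to an $L$-periodic environment via $(\mathcal{C}_2 c)$, use $(\mathcal{C}_2 a)$ to rule out left escapes, and prove a CLT with positive drift for the periodic walk---matches the paper's. The paper likewise first treats the exactly periodic case and then transfers to the true environment by comparing path probabilities, essentially your coupling argument.

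The substantive gap is in how you obtain and control the CLT. You project to the finite chain on $\Z/L\Z$ and invoke the ``classical functional CLT for additive functionals''; but that is a limit theorem for a \emph{fixed} chain, not a Berry--Esseen inequality with explicit constants. Here $L$ depends on $N$ (up to $N^{\eps^2}$), so you must track how the normal-approximation error depends on $L$ (through mixing time, spectral gap, or third moments) and check that it is beaten by some $N^{-\delta}$. You do not do this. The paper avoids the issue by using a renewal structure instead: it records the i.i.d.\ exit pairs $(\tau_i,U_i)$ from $[-L,L]$, bounds the first three moments of $\tau_i$ via Lemma~\ref{LmExitExp} (this is where $(\mathcal C_2 b)$ enters), shows $\Prob(U_i=L)\ge 1-e^{-0.5\sqrt L}$ from $(\mathcal C_2 a)$, and applies a Berry--Esseen theorem for the renewal counting process $M_N=\#\{i:\Theta_i\le N\}$. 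The error there is explicitly $(\hat\gamma/\hat V^{1/2})^3\sqrt{\hat\mu/N}$, polynomial in $L$ times $N^{-1/2}$, hence $<1/L$ since $L\le N^{\eps^2}$; and $Z_N\approx LM_N$ because the walker almost surely exits right at every renewal.

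Two secondary gaps. First, your speed bound $\tilde v\gtrsim 1/(\sqrt L\,e^A)$ is not what the hitting-time formula yields: the oscillation bound from $(\mathcal C_2 b)$ gives $\EXP_0[\tau_L]\lesssim L^2 e^A$ (a double sum of $e^{\tilde\Sigma(k,j)}\le e^A$), hence $\tilde v\gtrsim 1/(Le^A)$---still enough, since $Le^A\le L\ln L\le N^\eps$, but not what you wrote. Second, ``$\tilde\sigma_0$ is polynomially bounded in $L$'' is only half of what you need: to get $|\ln\sigma/\ln T-1/2|<\eps$ you also need a polynomial \emph{lower} bound on $\tilde\sigma_0$, which you do not supply. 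The paper gets both sides at once from the moment estimates $1\le\hat V\le ce^{2A}L^5$ and $L\le\hat\mu\le ce^A L^3$ in Lemma~\ref{LmExitExp}.
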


{Conditions $(\C_2(a))$ and $(\C_2(c))$ imply that on a large segment around the origin,
the potential $\Sigma$ is decreasing on scale $L$, while $(\C_2(b))$
means 
that there are no deep wells of the potential (also known as traps)
on smaller scales. Thus, Proposition \ref{prop.drift} confirms the heuristic arguments described in the 
introduction after \eqref{Pot}.}

\smallskip

\subsection{Two-sided drift criterion.} 
%Let $\varepsilon >0$. 
We say that {\bf $\sp$ satisfies
condition $\mathcal C_3(N,\eps)$ }  for $\varepsilon >0$
if there exist $A>100$, $e^{e^A}< Q < N^{1/2}$ 
and numbers $u, v, w_\pm, u', v', w'_\pm$ 
such that
$v, v' \in [0.3,0.4]$,
\begin{align*}
0.225 <u <  v-\eps <  w_-  < v < w_+ < v+\eps < 0.5, \\
 0.225 <u' <  v'-\eps < w_-' < v' < w'_+ < v'+\eps < 0.5,
\end{align*} 
and
\begin{align*}
 & \quad \Sigma(v N, w_+ N) > N^{1/2}, \quad 
\Sigma(vN,w_- N) > N^{1/2},  \tag{$\mathcal C_3a $}  \\
& \quad \Sigma(-v'N, -w'_- N) > N^{1/2}, \quad 
\Sigma(-v'N, -w'_+ N) > N^{1/2}, \\
 &  \quad  \Sigma (k,k') < A  \ \ \text{ for }
                    \ \ k, k' \in [-u'N ,  vN ], \quad k' \geq k, \tag{$\mathcal C_3b $}  \\
 &  \quad  \Sigma (k, k') < A  \ \ \text{ for } \
                    \ k, k' \in [-v'N ,  uN ], \quad  k' \leq k, \\
& \quad \Sigma(k) = \BS(k) + B(k), \quad k\in [-v'N,  vN], \tag{$\mathcal C_3c $}
\end{align*}

where $\BS$ and $B$ satisfy 
\begin{equation}\label{C3per}
|\BS(k) - \BS (k+lQ)| <Q^{-1/2}
  \  \text{ for } \    k\in [0,Q], \  l\in [-v'N/Q,  v N/Q] 
\end{equation}
and 
\begin{equation}\label{C3B}
B(k) 
\begin{cases}
=0 &\text{ for } \
                k\in [-u'N, uN], \\
\leq 0 &  \text{ for } k\in  [-v'N ,vN].
\end{cases}
\end{equation}

Figure \ref{figC3} illustrates the behavior of $\Sigma(k)$ for $k\in (-(v'+\eps)N,(v+\eps)N)$ due to $(\mathcal
C_3)$. In the figure we assumed $\bar{\Sigma}\equiv 0$ since \eqref{C3per} and \eqref{C3B}
imply that the effect of $\bar{\Sigma}$ is not important in the behavior of  $\Sigma(k)$.

We note that condition $(\mathcal C_3a)$ implies localization of the
walk around the points $- v'N$ and $vN$ (compare with conditions of Proposition
\ref{prop.localization}). 
%{  \begin{figure}[thb]\label{figC3}
%        \psfrag{-w'+}{\tiny $-w'_+N$}
%         \psfrag{-w'-}{\tiny $-w'_-N$}
% 	 \psfrag{-v'}{\tiny $-v'N$}
% 	 \psfrag{-u'}{\tiny $-u' N$}
%	 \psfrag{u}{\tiny $u N$}
%	 \psfrag{w-}{\tiny $w_- N$}
%	 \psfrag{v}{\tiny $vN$}
%	 \psfrag{w+}{\tiny $w_+ N$}
%	 \psfrag{N}{\tiny $\sqrt N$}
{  \begin{figure}[thb]
        \psfrag{-w'+}{$-w'_+N$}
         \psfrag{-w'-}{$-w'_-N$}
 	 \psfrag{-v'}{$-v'N$}
 	 \psfrag{-u'}{$-u' N$}
	 \psfrag{u}{$u N$}
	 \psfrag{w-}{$w_- N$}
	 \psfrag{v}{$vN$}
	 \psfrag{w+}{$w_+ N$}
	 \psfrag{N}{$\sqrt N$}
%	 \psfrag{Tu}{\tiny $-u'N$}
% 	 \psfrag{Tu'}{\tiny $-u'N$}
    	\center{ \includegraphics[height=1.0in]{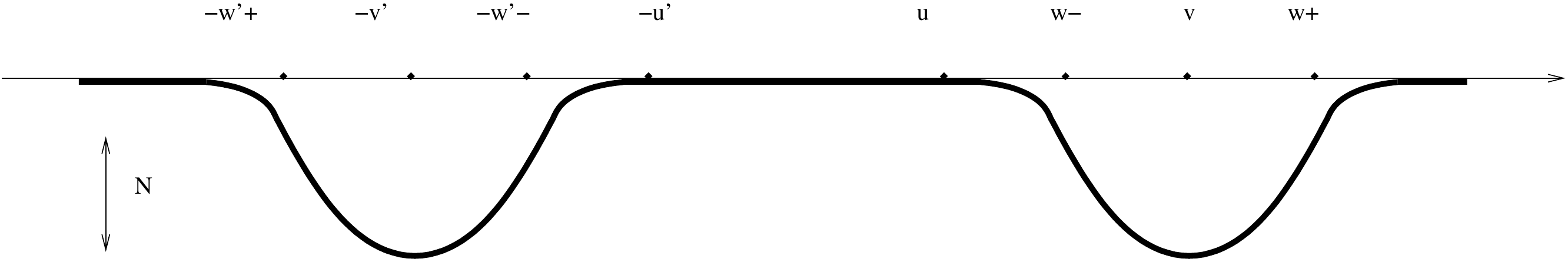}}
		\caption{\small A sketch of the graph of $\Sigma(k)$ under condition $\mathcal C_3(N,\eps)$, in which it is assumed that $\bar{\Sigma}\equiv 0$.}
\label{figC3}
%%%%%    	\label{condition3}
\end{figure}}

Conditions $(\mathcal C_3b_\pm)$ imply that the random walk starting at
zero exits the interval $[- v'N , vN]$ before time $N^5$ with
probability almost one (see Lemma \ref{LmExitExp} below). 

Condition $(\mathcal C_3c)$ compares the walk on $[-v'N ,vN]$
to a $Q$-periodic walk. This condition, combined with $(\mathcal
C_3b_\pm)$, makes sure that for the random walk starting at
zero, both the probability of reaching $- v'N$ before time $N^5$, 
and  the probability of reaching $v N$ before time $N^5$, are
not too small. Since  $(\mathcal C_3a)$ implies localization  around $- v'N$ and $vN$ we get the following.

\begin{proposition} \label{prop.nolimit}
If $\sp \in \mathcal C_3(N,\eps)$, then  for any $T\in[N^{5}, e^{N^{1/4}}]$ we have:
\begin{equation}\label{eq.drift}
\tag{{\bf Two-sided drift}}
\begin{cases}
\Prob\left( Z_T \in [w_-N,w_+N]   \right) > 0.1, \\ 
\Prob\left( Z_T \in [-w'_+N,-w'_-N]   \right) > 0.1.
\end{cases}
\end{equation}
%Moreover, with probability close to 1, the walk is either concentrated
%around $bN$ or $- bN$. 
\end{proposition}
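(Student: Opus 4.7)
The plan is to decompose the trajectory into two phases: a fast macroscopic excursion from $0$ to one of the boundaries $\{-v'N,\,vN\}$, followed by localization near that boundary for all remaining time up to $T$. For the first phase, condition $(\mathcal{C}_3b)$ bounds the oscillations of $\Sigma$ by $A$ on the two half-intervals, so the exit time estimates from Section~\ref{ScExit} (Lemma~\ref{LmExitExp}) give that $\tau=\inf\{t:Z_t\in\{-v'N,vN\}\}$ satisfies $\Prob(\tau>N^{3})=o(1)$, comfortably inside $T\geq N^{5}$. To show each exit side has probability bounded below, I use the optional stopping formula \eqref{LeftRight}
$$
\Prob(Z_\tau=vN)=\frac{M(0)-M(-v'N)}{M(vN)-M(-v'N)},
$$
and decompose $\Sigma=\bar\Sigma+B$ as in $(\mathcal{C}_3c)$. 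Since $B\equiv 0$ on $[-u'N,uN]$, $\bar\Sigma$ is $Q$-periodic up to error $Q^{-1/2}$, and $N/Q\to\infty$ (because $Q<N^{1/2}$), the interior sums
$$
\sum_{j=-u'N}^{-1}e^{\Sigma(j)}=(1+o(1))\tfrac{u'N}{Q}\bar S,\qquad \sum_{j=0}^{uN-1}e^{\Sigma(j)}=(1+o(1))\tfrac{uN}{Q}\bar S,
$$
with $\bar S=\sum_{j=0}^{Q-1}e^{\bar\Sigma(j)}$, dominate the outer sums over $[-v'N,-u'N)$ and $(uN,vN]$, each of which is at most $\tfrac{(v'-u')N}{Q}\bar S$ and $\tfrac{(v-u)N}{Q}\bar S$ respectively, since $B\leq 0$ there. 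Plugging these bounds in yields
$$
\Prob(Z_\tau=vN)\ \geq\ \frac{u'}{v+v'}\ \geq\ \frac{0.225}{0.8}\ >\ 0.28,
$$
and symmetrically $\Prob(Z_\tau=-v'N)>0.28$.

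For the second phase I invoke the strong Markov property at $\tau$. Conditional on $\{Z_\tau=vN\}$, condition $(\mathcal{C}_3a)$ says that $vN$ is a sharp local minimum of $\Sigma$, with walls of height exceeding $\sqrt{N}$ at distances $(w_+-v)N$ and $(v-w_-)N$. Translating the origin to $vN$, this is exactly criterion $\mathcal{C}_1(\tilde N)$ for $\tilde N=\min(w_+-v,\,v-w_-)\cdot N$, since $\sqrt{N}>\sqrt{\tilde N}$. Proposition~\ref{prop.localization} then gives
$$
\Prob\Bigl(\max_{r\leq T-t^*}|Z_{t^*+r}-vN|>16(\ln T)^2\ \Big|\ Z_{t^*}=vN\Bigr)\ <\ T^{-2},
$$
valid as long as $T\leq e^{\sqrt{\tilde N}/4}$; this range covers $T\leq e^{N^{1/4}}$ because $\tilde N$ is a fixed positive fraction of $N$. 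Since $16(\ln T)^2\leq 16 N^{1/2}\ll (w_+-v)N\wedge(v-w_-)N$ for large $N$, this localization ball is contained in $[w_-N,w_+N]$. Combining the two phases, $\Prob(Z_T\in[w_-N,w_+N])>0.28-o(1)>0.1$, and the symmetric argument at $-v'N$ yields the second inequality in \eqref{eq.drift}.

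The main technical obstacle is the lower bound on the exit probabilities in the first phase. One must rule out the possibility that the outer regions $[-v'N,-u'N)$ and $(uN,vN]$ (where $B$ can be very negative but \emph{a priori} also contain tall spikes of $\bar\Sigma$) contribute a dominant share to $M(vN)-M(-v'N)$ compared to the periodic interior $[-u'N,uN]$. The sign restriction $B\leq 0$ is precisely what prevents the outer sums from exceeding their periodic counterparts, and the smallness $Q<N^{1/2}$ ensures that many periods fit inside the interior, so that the interior contributions are macroscopically comparable with the whole denominator. All other steps use the martingale machinery \eqref{EqMart}--\eqref{LeftRight} and the already-established criteria of Sections~\ref{ScAbstPot}--\ref{ScExit}.
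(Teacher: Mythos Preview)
Your three-phase decomposition (fast exit from $(-v'N,vN)$, lower bound on each exit side, then localization near the exit point) is exactly the paper's Steps 1--3, and your exit-side estimate via $(\mathcal C_3c)$ is correct and in fact sharper than the paper's (you get $\Prob(Z_\tau=vN)>0.28$ whereas the paper only shows $\Prob(\text{exit left})<0.89$, i.e.\ $>0.11$ for exit right).

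There is one genuine gap in your localization phase. You invoke Proposition~\ref{prop.localization} with parameter $\tilde N=\min(w_+-v,v-w_-)\cdot N$ and read off the validity range $T\le e^{\sqrt{\tilde N}/4}$, then claim this covers $T\le e^{N^{1/4}}$ because ``$\tilde N$ is a fixed positive fraction of $N$''. But condition $\mathcal C_3$ gives \emph{no} lower bound on $w_+-v$ or $v-w_-$ beyond positivity (only the upper bound $<\eps$), so $\tilde N$ need not be comparable to $N$. The only implicit lower bound comes from ellipticity and $(\mathcal C_3a)$: since each increment of $\Sigma$ is $O(1)$, the inequality $\Sigma(vN,w_+N)>\sqrt N$ forces $(w_+-v)N\gtrsim\sqrt N$, i.e.\ $\tilde N\gtrsim\sqrt N$; this gives $e^{\sqrt{\tilde N}/4}\gtrsim e^{cN^{1/4}}$ with a constant $c$ that is not obviously $\ge 1$.

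The fix, which is what the paper does in its Step~3, is to bypass Proposition~\ref{prop.localization} and apply Lemma~\ref{lem_loc} directly. The proof of that lemma shows that the trapping time scale is governed by the barrier \emph{height}, not the barrier distance: if $\Sigma(vN,w_+N)>h$ then $M(w_+N)-M(vN)\ge e^{h-O(1)}$ and the walk started at $vN$ stays left of $w_+N$ for time $e^{h/2}$ with probability $1-O(e^{-h/2})$. Here $h=\sqrt N$ by $(\mathcal C_3a)$, so the localization holds for $T\le e^{\sqrt N/2}$, comfortably covering $e^{N^{1/4}}$ regardless of how small $w_+-v$ is. With this correction your argument goes through.
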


\section{Exit time estimates}
\label{ScExit}
In this section we derive the key estimates used in the proof of the
above propositions for $\sp \in \cE$, see \eqref{set_E}. Recall 
the definitions of $\Sigma(n)$ and $M(n)$ given in \eqref{EqSigma}
 and \eqref{EqMart}.

\subsection{Traps.} 
\begin{lemma}
\label{lem_loc}
Suppose that 
for some $N$ condition $(\mathcal Ca_+)$ holds, i.e.,
\begin{equation}
\Sigma(N) > \sqrt{N} \tag{$\mathcal Ca_+ $} .
\end{equation}
Then 
%there exists $c>0$ such that 
for $T=e^{\sqrt{N}/2}$ we have
$$
\Prob(\max_{t\leq T}  Z_t > N)<  \exp(-\sqrt{N}/2) . 
$$
In the same way, if 
\begin{equation}
\Sigma(-N) > \sqrt{N}  \tag{$\mathcal Ca_- $} ,
\end{equation}
then %there exists $c>0$ such that 
for  $T=e^{\sqrt{N}/2}$  we have
$$
\Prob(\max_{t\leq  T}  Z_t <  -N)<  \exp(-\sqrt{N}/2) . 
$$
Moreover, if for some $N$ both $(\mathcal Ca_+)$ and $(\mathcal Ca_-)$
hold, then for  $T_1=e^{\sqrt{N}/4}$ we have:
%{\bgreen 
$$
\Var(Z_{T_1})<  300 (\ln T_1)^4 .
$$
%HERE I TOOK AWAY THE CONSTANT $c$; HOPE IT IS OK.}
\end{lemma}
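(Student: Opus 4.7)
The plan is to exploit the martingale $M(Z_t)$ from \eqref{EqMart} together with the optional stopping identity \eqref{LeftRight}. The hypothesis $\Sigma(N)>\sqrt N$ will force $M(N)$ to be at least of order $e^{\sqrt N}$, so by \eqref{LeftRight} the probability that a single excursion from $0$ reaches level $N$ is only of order $e^{-\sqrt N}$; a union bound over the $\cO(T)$ visits to $0$ during time $T=e^{\sqrt N/2}$ then gives the tail estimate. The variance bound will follow by combining the two one-sided localization estimates with the deterministic upper bound $|Z_t|\le t$.

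\textbf{Key steps for the tail bound.} First I lower-bound $M(N)=\sum_{j=0}^{N-1}e^{\Sigma(j)}$. The ellipticity $\sp\in[\eps_0,1-\eps_0]$ gives $|\Sigma(j+1)-\Sigma(j)|\le C_0:=\ln\frac{1-\eps_0}{\eps_0}$, so $\Sigma(j)\ge \sqrt N - C_0(N-j)$; summing the resulting geometric series over the window $N-\lfloor\sqrt N/C_0\rfloor\le j\le N-1$ yields
\[
M(N)\ \ge\ c_0\, e^{\sqrt N}, \qquad c_0=c_0(\eps_0)>0.
\]
Second, by \eqref{LeftRight} with $(a,b)=(0,N)$, $\Prob_1(\tau_N<\tau_0)=M(1)/M(N)=1/M(N)$, so from $0$ each excursion reaches $N$ with probability at most $\sp(0)/M(N)\le c_0^{-1}e^{-\sqrt N}$. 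Third, each excursion from $0$ takes at least two steps, so there are at most $T/2+1$ visits to $0$ in $[0,T]$; the strong Markov property and a union bound then give
\[
\Prob_0\!\left(\max_{t\le T} Z_t>N\right)\ \le\ \frac{T+2}{2}\cdot\frac{\sp(0)}{M(N)}\ \le\ c_1(\eps_0)\,e^{-\sqrt N/2},
\]
and the multiplicative constant $c_1$ is absorbed into the exponent for $N$ large (the claim is essentially vacuous for small $N$, since $Z_t$ cannot reach $N$ before time $N$). The companion bound for $\Prob(\max_{t\le T} Z_t<-N)$ is obtained identically using the left half of the martingale $M$ and the hypothesis $\Sigma(-N)>\sqrt N$.

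\textbf{Variance and main obstacle.} Assuming both hypotheses and using $T_1=e^{\sqrt N/4}<T$, the two one-sided estimates yield $\Prob(\max_{t\le T_1}|Z_t|>N)<2e^{-\sqrt N/2}$. Splitting the expectation,
\[
\EXP[Z_{T_1}^2]\ \le\ N^2\ +\ T_1^2\cdot 2e^{-\sqrt N/2}\ =\ N^2+2,
\]
and since $\ln T_1=\sqrt N/4$ we have $300(\ln T_1)^4=\frac{300}{256}N^2$, which exceeds $N^2+2$ for $N$ large. The main obstacle is the first step: the hypothesis $\Sigma(N)>\sqrt N$ only controls the endpoint of the potential, not the partial sums $\Sigma(j)$ for $j<N$, so the elliptic Lipschitz bound on $\Sigma$ is essential for converting endpoint information into a lower bound on the entire sum $M(N)$. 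A secondary nuisance is tracking constants to obtain the strict inequality $<e^{-\sqrt N/2}$, which amounts to routine bookkeeping (tuning the window size in Step~1 or treating small $N$ separately).
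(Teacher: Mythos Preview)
Your proof is correct and follows essentially the same route as the paper: lower-bound $M(N)$ using $\Sigma(N)>\sqrt N$, apply the optional stopping formula \eqref{LeftRight} to bound the probability that a single excursion from $0$ reaches $N$, union-bound over the at most $O(T)$ visits to $0$ up to time $T=e^{\sqrt N/2}$, and finally split $\EXP[Z_{T_1}^2]$ on the event $\{|Z_{T_1}|\le N\}$ using $|Z_t|\le t$. The paper simply asserts $M(N)\ge e^{\sqrt N}$ without comment, so your explicit use of the ellipticity Lipschitz bound $|\Sigma(j)-\Sigma(j-1)|\le C_0$ to pass from endpoint information $\Sigma(N)>\sqrt N$ to a lower bound on the sum $M(N)=\sum_{j<N}e^{\Sigma(j)}$ is actually a point the paper glosses over; in fact a single term $e^{\Sigma(N-1)}\ge e^{\sqrt N-C_0}$ already suffices, so your window-sum is more than enough.
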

\begin{proof}
%Letter $c$ below denotes a universal positive constant that may change from
%line to line.
Recall the notations and the background material from \S
\ref{subsec_martingale}.   If $(\C a_+)$ holds, then we have $M(N)\geq  e^{\sqrt{N}}$, and  (\ref{LeftRight})
implies that 
$$
\begin{aligned} 
\Prob(Z_t \text{ visits } N \text{ before } 0|Z_0=1)& =\frac{M(1)-M(0)}{M(N)-M(0)}
= \frac{1}{M(N)} \\
&\leq  \exp\left(-\sqrt{N}\right).  
\end{aligned}
$$
Hence, for $L =o(e^{\sqrt{N}})$ we have:
$$ 
\Prob(Z_t\text{ visits } N \;\; \text{ before visiting } 0 \;\; L \text{ times})
\leq 1- \left(1-e^{-\sqrt{N}}\right)^L\leq 2L \exp\left(- \sqrt{N}\right).  
$$
Choosing $L=\exp(\sqrt{N}/2)$, we obtain:
$$ 
\Prob\left(\max_{t \leq  \exp(\sqrt{N}/2)}
Z_t> N\right)<  2\exp(-\sqrt{N}/2).
$$
The  case of $(\C a_-)$ is exactly similar.

Assume now that both $(\C a_+)$ and $(\C a_-)$ hold.
Then for  $T_1=\exp(\sqrt{N}/4)$  we have $N=16(\ln T_1)^2$, and $\DS 
\Var \left(Z_{T_1}\right) \leq  \EXP(Z_{T_1}^2) \leq  N^2 + 2T_1^2
\exp(-\sqrt{N}/2)<  
N^2 + 2 <  300(\ln T_1)^4.$ \indent $\ \hfill $  \color{bleu1}  \end{proof} \black

%%%%%%%%%%%%%%%%%%%%%%%%%%%%%%%%%%%%%%%%%%%%%%%%%
%%%%%%%%%%%%%%%%%%%%%%%%%%%%%%%%%%%%%%%%%%%%%%%%%

\subsection{Exit time in the absence of traps.}
 
Let $L\in \NN$. For an arbitrary choice of $k_0 \in (-L,L)$, let $\tau$ be the first time the walk that starts at $k_0$ hits $L$ or $-L$. 
\begin{lemma}\label{LmExitExp}
Suppose that there exist $A>100$ and  $L$ satisfying $e^{e^A} < L$ such that 
for each $k \in[-L,L]$ either $(\mathcal Cb_+)$ or $(\mathcal Cb_-)$ holds: 
\begin{equation}
\Sigma(k,k') < A \text{
  for all } k' \in[-L,L], \quad k' \geq  k;
\tag{$\mathcal Cb_+ $} .
\end{equation}
\begin{equation}
\Sigma (k,k') < A \text{
  for all } k' \in[-L,L], \quad k' \leq  k
\tag{$\mathcal Cb_- $} .
\end{equation}
%$$
%(\mathcal Cb_-)  \quad \quad  &\Sigma_{x} (k,\bar k) \geq -A \text{
%  for all } k, \ \bar k \in[-L,L], \quad \bar k < k \\
%$$ 
Then there is a constant $c>0$ such that for $s\in \{1,2,3\}$ we have 
\begin{equation}
\label{M1-3}
\EXP(\tau^s)\leq c e^{sA} L^{2s+1}.
\end{equation}
Moreover $\EXP(\tau) \geq L$ and $\Var(\tau)\geq 1$. 
\end{lemma}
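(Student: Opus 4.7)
The lemma has three assertions: the moment bound \eqref{M1-3}, $\EXP(\tau)\ge L$, and $\Var(\tau)\ge 1$. The latter two reduce to elementary facts: the walk takes $\pm1$ steps, so from $k_0$ it needs at least $\min(L-k_0,L+k_0)$ steps to exit (giving $\EXP(\tau)\ge L$ for interior starting points), and ellipticity $\sp,\sq\ge \eps_0$ prevents $\tau$ from being concentrated on a single value. My focus is on \eqref{M1-3}.

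The strategy is to establish a uniform $L^1$ estimate
$$
\EXP_{k_0}(\tau)\le Ce^A L^2\qquad \forall\, k_0\in(-L,L),
$$
and then bootstrap to $s=2,3$ via Markov plus the strong Markov property. For the $L^1$ bound, my plan is to use the Green's function representation $\EXP_{k_0}(\tau)=\sum_{j=-L+1}^{L-1}g(k_0,j)$, where $g(k_0,j)$ is the expected number of visits to $j$ before $\tau$. The standard birth--death chain formula (derivable from \eqref{LeftRight} and reversibility, with speed measure weighted proportionally to $e^{-\Sigma(j)}/\sp(j)$) yields
$$
g(k_0,j)=\frac{1}{\sp(j)\,e^{-\Sigma(j)}}\cdot\frac{[M(k_0\wedge j)-M(-L)]\,[M(L)-M(k_0\vee j)]}{M(L)-M(-L)}.
$$
This expression is tailor-made for the hypothesis: at every $j\in[-L,L]$ at least one of $(\mathcal{C}b_+)$ or $(\mathcal{C}b_-)$ holds, which via $M(k')-M(j)=\sum_{i=j}^{k'-1}e^{\Sigma(i)}$ translates into
$$
M(L)-M(j)\le L\,e^{\Sigma(j)+A}\quad\text{or}\quad M(j)-M(-L)\le L\,e^{\Sigma(j)+A}.
$$
Whichever inequality is available at $j$, one factor in the numerator of $g(k_0,j)$ is bounded by $Le^{\Sigma(j)+A}$; the remaining factor is at most the denominator $M(L)-M(-L)$, and the $e^{\Sigma(j)}/\sp(j)$ prefactor cancels the $e^{\Sigma(j)}$ part. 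Ellipticity absorbs $\sp(j)^{-1}$ into a universal constant, giving $g(k_0,j)\le Ce^A L$ uniformly. Summing over the $O(L)$ sites produces the claimed $\EXP_{k_0}(\tau)\le Ce^AL^2$.

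The main obstacle will be carrying out this Green's function estimate site-by-site: the applicable condition $(\mathcal{C}b_+)$ or $(\mathcal{C}b_-)$ may be different at different $j$, and one must verify that the factor not controlled by hypothesis is indeed absorbed by the denominator $M(L)-M(-L)$ up to a universal constant. Once the uniform $L^1$ bound is in place, the bootstrap is standard: with $T_*=2Ce^AL^2$, Markov gives $\Prob_{k_0}(\tau>T_*)\le 1/2$ uniformly in $k_0$, and iterating via the strong Markov property at times $T_*,2T_*,\ldots$ (using that a non-exited walker is still in $(-L,L)$ where the same estimate applies) gives $\Prob_{k_0}(\tau>nT_*)\le 2^{-n}$. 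Plugging this into the identity $\EXP(\tau^s)=s\int_0^\infty T^{s-1}\Prob(\tau>T)\,dT$ produces $\EXP(\tau^s)\le cT_*^s\le ce^{sA}L^{2s}\le ce^{sA}L^{2s+1}$ for $s\in\{1,2,3\}$, the extra factor of $L$ appearing as harmless slack.
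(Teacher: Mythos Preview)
Your strategy is sound and genuinely different from the paper's, but your displayed Green's function formula has the sign of $\Sigma(j)$ wrong. You correctly say in the parenthetical that the speed measure is proportional to $e^{-\Sigma(j)}/\sp(j)$, yet the displayed prefactor $\frac{1}{\sp(j)e^{-\Sigma(j)}}=\frac{e^{\Sigma(j)}}{\sp(j)}$ is its reciprocal. With your formula, the claimed ``cancellation'' would produce $e^{2\Sigma(j)}$, not $1$. The correct identity (which you can read off from $g(j,j)=1/r_j$ and the gambler's ruin formula \eqref{LeftRight}) is
\[
g(k_0,j)=\frac{e^{-\Sigma(j)}}{\sp(j)}\cdot
\frac{[M(k_0\wedge j)-M(-L)]\,[M(L)-M(k_0\vee j)]}{M(L)-M(-L)},
\]
and with this prefactor your argument runs exactly as written: under $(\mathcal Cb_+)$ at $j$ one has $M(L)-M(j)\le 2Le^{\Sigma(j)+A}$, the $e^{-\Sigma(j)}$ now genuinely cancels the $e^{\Sigma(j)}$, and $g(k_0,j)\le 2Le^A/\eps_0$ follows. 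The bootstrap via the Markov property at multiples of $T_*$ is fine and in fact yields $\EXP(\tau^s)\le c\,e^{sA}L^{2s}$, one power of $L$ better than the stated \eqref{M1-3}.

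For comparison, the paper avoids the Green's function formula altogether. It writes $\tau=\sum_k\eta_k$ with $\eta_k$ the number of visits to $k$, bounds $\tau^s\le L^s\sum_k\eta_k^s$ by the power--mean inequality, and then controls $\EXP(\eta_k^s)$ directly: $\eta_k$ is stochastically dominated by a geometric variable with success probability $r_k=\Prob_k(\text{no return to }k)$, and the hypothesis $(\mathcal Cb_\pm)$ at $k$ together with \eqref{LeftRight} gives $r_k\ge \eps_0/(Le^A)$. This handles all three moments in one stroke, at the cost of the extra factor $L$. Your route trades that directness for a two--stage argument (first moment via Green's function, higher moments via exponential tails), which is slightly more work but slightly sharper. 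Both hinge on the same core estimate, namely that the expected occupation time at any site is at most $C e^A L$.
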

\begin{proof} For every $k \in I=[-L+1,L-1]$,  let $\eta_k$ be the total 
time the walker (starting at $0$) spends at site $k$ before reaching $-L$ or $L$.   Then
$\DS
\tau =  \sum_{k\in  I} \eta_k.
$
Hence, for any $s \in \N$ 
$$
\tau^s  \leq L^s \sum_{k \in I} \eta_k^s.
$$

Thus, it suffices to show that for $s\in \{1,2,3\}$ and for any $k \in I$ 
\begin{equation*}\label{eq.eta} 
\EXP(\eta_k^s)\leq  c e^{sA} L^{s}.
\end{equation*}
For $k \in I$, let  $\bar \eta_k$ be the total 
time a walker starting at site $k$ spends at site $k$ before reaching $-L$ or $L$.

Note that $\bar \eta_k$ has geometric distribution with parameter
$$
r_k=\Prob(Z\text{ starting at $k$ does not return to }k\text{ before exiting }I) .
$$ 
Since $\EXP(\eta_k^s) \leq \EXP(\bar \eta_k^s)$, we finish 
{the proof of \eqref{M1-3}} once we prove the following 

\medskip 

\noindent {\sc Claim. } {If either $(\mathcal Cb_+)$ or
  $(\mathcal Cb_-)$ holds, we have for every $k \in I$
$$
r_k\geq \frac{c}{L e^A}.
$$}

\medskip 
{\bf \color{bleu1} Proof of the Claim.} Fix $k \in[-L,L] $. Observe that since the walk is elliptic ($\sp \in \cE$), then there exists $c>0$ such that 
\begin{align*}
r_k\geq \eps_0 \max \{    &\Prob(Z\text{ visits }L\text{ before }k|Z_0=k+1), \\
&\Prob(Z\text{ visits }(-L)\text{ before }k|Z_0=k-1)  \}.
\end{align*}
Now, if $(\mathcal Cb_+)$ holds, then \eqref{LeftRight} implies
\begin{align*}
\Prob(Z\text{ visits }L\text{ before
 }k|Z_0=k+1) =&\frac{M(k+1)-M(k)}{M(L)-M(k)}= 
\frac{e^{\Sigma(k+1)}}{\sum_{j=k+1}^{L-1}e^{\Sigma(j)} } \\ 
= &\frac{ 1 }{1+ \sum_{j=k+2}^{L-1}e^{\Sigma(k+1, j)} }
> \frac{1}{L e^A}.
\end{align*}
In the same way, if $(\mathcal Cb_+)$ holds, then
$$
\Prob(Z\text{ visits }(-L)\text{ before }k|Z_0=k-1)
> \frac{1}{L e^A},
$$ 
and the claim is proved. 
\color{bleu1}  \end{proof} \black 
Since the walker moves one step at a time, %it is clear that
$\EXP(\tau)\geq L$. The lower bound on the variance of $\tau$ is
obvious due to the ellipticity condition on the walk.  $\hfill \Box$

%%%%%%%%%%%%%%%%%%%%%%%%%%%%%%%%%%%%%%%%%%%%%55
%%%%%%%%%%%%%%%%%%%%%%%%%%%%%%%%%%%%%%%%%%%%%%%
%%%%%%%%%%%%%%%%%%%%%%%%%%%%%%%%%%%%%%%%%%%%%%

\section{Proofs of the criteria} 
\label{ScCrit}
In this section, we prove Propositions \ref{prop.localization}--\ref{prop.nolimit}.

 Proposition \ref{prop.localization} 
follows directly from Lemma
\ref{lem_loc}.

\bigskip \noindent {\bf \color{bleu1}  Proof of Proposition \ref{prop.drift}.}
Fix $N \in \N$ and $\eps>0$ and $A>100$ and  $L$ satisfying
$e^{e^A} < L \leq N^{\varepsilon^2}$, and $N\leq e^{L^{0.1}}$.

{To make the argument easier to follow, we first  consider the periodic case, i.e., we assume that 
 the environment satisfies  
\begin{equation}
\label{PeriodicL}
\sp(k+L)=\sp(k) \text{ for any }k\in \integers.
\end{equation} }
If we run the walk starting from $0$ and stop it at the
time  $\tau$ when it hits either $L$ or $-L$, we get two random
variables: $\tau$ and {$U=Z_\tau$} (thus, $U$ takes values $L$ or $-L$). 
Let us consider iid copies $(\tau_i,U_i)$ of such pairs.
Denote 
$$ 
\hat \mu=\EXP(\tau_i), \quad \hat{V} = \EXP((\tau_i-\hat \mu)^2),
\quad \hat \gamma=\mathbb E(|\tau_i-\hat \mu|^3).
$$
By Lemma  \ref{LmExitExp}
we have the following estimates:
\begin{equation}\label{taui}
L\leq  \hat \mu \leq c e^{A} L^3, \quad 
1\leq\hat V\leq c e^{2A} L^5, \quad 
\hat \gamma  \leq c e^{3A} L^7.
\end{equation}

Note that  
$$
\Prob (U=L)\geq 
1-e^{-0.5 \sqrt{L}} 
$$
by condition $(\mathcal C_2a)$ (cf. the proof of Lemma \ref{lem_loc}).

For $M\in \N$, denote
$$
\Theta_M:=\sum_{i=1}^M \tau_i.
$$   
For $M\leq N^2$ we have that  $\DS Z_{\Theta_M} = \sum_{i=1}^M U_i $ satisfies 
\begin{equation} \label{THM} {\mathbb P}\left(Z_{\Theta_M} = ML\right)\geq (1-e^{-0.5 \sqrt{L}})^{N^2}\geq 1-
e^{-0.1 \sqrt{L}},
\end{equation}  
if $N$ is sufficiently large, since  $N\leq e^{L^{0.1}}$.

Define the stopping time $M_N$ as the first integer such that $\Theta_{M_N}\geq N$. 
By %a theorem of Lorden 
\cite[Theorem 1]{lorden}, we have that the "residual lifetime" or "excess over the boundary",
$\Theta_{M_N}- N$ satisfies 
\begin{equation} \label{eqexcess} 
{\mathbb E}(\Theta_{M_N}- N) \leq \hat
V/\hat \mu \leq c e^{2A} L^4,
\end{equation}
from \eqref{taui}. Since $L>e^{e^A}$ Markov inequality implies that
\begin{equation} \label{eqLL} 
{\mathbb P} \left(\Theta_{M_N}\in [N,N+L^{6}]\right)\geq 1-\frac{1}{L}. \end{equation} 

Thus, combining \eqref{THM} and \eqref{eqexcess}, we get
\begin{equation} \label{eqZN} 
{\mathbb P} \left(
|Z_N-M_N L|\leq 2L^{6}\right) \geq 1-\frac{2}{L}.
\end{equation}

By the Berry-Esseen theorem for renewal counting processes
{ \cite[Theorem 2.7.1]{Gu} }
we have
$$
\left| \mathbb P \left(
    \frac{M_N-\frac{N}{\hat \mu}}{\sqrt{\frac{\hat V}{\hat \mu^3} N}} <z
  \right)-\Phi(z)\right| 
<  4\left(\frac{\hat \gamma}{\sqrt{\hat V}}\right)^3
\sqrt{\frac{\hat \mu}{N}}<  \frac{1}{L}
$$ 
if $N$ (and therefore $L$ since  $N\leq e^{L^{0.1}}$) is sufficiently large. Hence 
\begin{equation} \label{eqclt} \left| \mathbb P \left(
    \frac{Z_N - \frac{LN}{\hat \mu}}{L\sqrt{\frac{\hat V}{\hat \mu^3} N}}<z
  \right)-\Phi(z)\right| <  \frac{1}{\sqrt{L}}.
\end{equation}
Since $L\leq N^{\varepsilon^2}$,  (\ref{taui}) implies that
$$
\mu:=\frac{LN}{\hat \mu}>\frac{N}{c e^AL^2}>N^{1-\varepsilon},
$$
and 
$$ 
\sigma: =  L \sqrt{\frac{\hat V}{\hat \mu^3} N}
$$
satisfies $|\ln\sigma /\ln N -1/2|<\varepsilon $.

This completes the proof in the periodic case \eqref{PeriodicL}. 
Now let the periodicity assumption \eqref{PeriodicL} be replaced by the
weaker condition $(\C_2c)$. In this case we 
consider a new periodic environment
$\bar \sp_n$, where $\bar \sp_n=\sp_n$ for each $n\in [0, L]$, and
$\bar \sp_n$ is periodic with period $L$. 
Let $\overline{\Prob}$ denote the corresponding probability for the paths of the walk $(\bar Z_t)_{t\in \N}$ defined by $\bar \sp$. By $(\mathcal C_2c)$, the conditions $(\mathcal C_2a)$ and $(\mathcal C_2b)$ are valid for  $(\bar Z_t)_{t\in \N}$, thus 
the CLT limit \eqref{eqclt} holds for $\bar Z_N$, with the corresponding $\overline{\hat{\mu}}$, $\overline{\hat{V}}$.

Now, for any path of length $N$ for the walks : $(z_1,\ldots,z_N)$, $z_{j+1}=z_j\pm1$,  we have from  $(\C_2c)$ that 
$$ \left|{\Prob}\left(Z_1=z_1,\ldots,Z_N=z_N\right)-\overline{\Prob}\left(\bar Z_1=z_1,\ldots,\bar Z_N=z_N\right)\right|<N^{-1/\eps^2} {\Prob}\left(\bar Z_1=z_1,\ldots,\bar Z_N=z_N\right).$$ 
This shows that $|\overline{\hat{\mu}}-{\hat{\mu}}|<N^{-1}$ and  $|\overline{\hat{V}}-{\hat{V}}|<N^{-1}$, and also that one can replace $\bar Z_N$ by $Z_N$ in \eqref{eqclt}.  Hence, the general case follows from the periodic one. $\hfill \Box$

\ignore{
{\red Reference: 
Berry-Esseen theorem for renewal counting processes

Let $N(T)=\max \{ n: \Theta (n) <  T\}$. 

Theorem: Let $\Theta(n)=\sum_{j=1}^n\tau_j$, where $\tau_j$,
$j=1,2,\dots$, are iid random variables. 
Suppose that $\mu=\EXP(\tau_1)$, $\sigma^2=\Var \tau_1$ and
$\gamma^3=E|\tau_1-\mu|^3$ are all finite. Then
$$
\sup_n\left| P(N(T)<n) -\Phi \left( \frac{(n\mu -T)\sqrt\mu}{\sigma\sqrt{T}}\right)\right|
<  4\left( \frac{\gamma}{\sigma}\right)^3 \sqrt{\frac{\mu}{T}}.
$$
}}

%%%%%%%%%%%%%%%%%%%%%%%%%%%%%%%%%%%%%%%%%%%
%%%%%%%%%%%%%%%%%%%%%%%%%%%%%%%%%%%%%%%%%%%%%%
%%%%%%%%%%%%%%%%%%%%%%%%%%%%%%%%%%%%%%%%%%%%

\bigskip \noindent {\bf \color{bleu1} Proof of Proposition \ref{prop.nolimit}.} \ 
We divide the proof into three steps.

%\noindent {\bf Step 1.} 

%By Lemma
%\ref{LmExitExp}, under condition $(\mathcal C_3b)$ there exists
%$c>0$ such that for any $k\in (-v'N,vN)$
%\begin{equation*} 
%\Prob (Z_t \text{ reaches $-v'N$ or $vN$ before time $N^4$}| Z_0=k) 
%> \frac{ce^{-A}}{N}. 
%\end{equation*} 
%This easily implies that
%\begin{equation}\label{exit} 
%$$
%\Prob (Z_t \text{ reaches $-v'N$ or $vN$ before time $N^6$}) 
%> 1-e^{-N}.
%$$ 
%\end{equation} 

\medskip

\noindent {\bf Step 1.  }
For an arbitrary choice of $k\in (-v'N,vN)$, denote by $\tau$ the
exit time from $(-v'N,vN)$ (while starting at $k$).
We need to estimate
$$
\Prob (Z_t \text{ reaches $-v'N$ or $vN$ before time
$N^5$}| Z_0=k) =\Prob(\tau < N^5) .
$$

By Lemma
\ref{LmExitExp}, under condition $(\mathcal C_3b)$ there exists
$c>0$ such that for any $k\in (-v'N,vN)$ we have:
$\EXP (\tau) < ce^A N^3$. Then
$\Prob(\tau > N^4) N^4 < \EXP (\tau) <ce^A N^3$, so
$$
\Prob(\tau > N^4) < ce^A / N .
$$
This implies
$$
\Prob(\tau > N^5) < (ce^A / N )^N <e^{-N}.
$$
Hence,
$$
\Prob (Z_t \text{ reaches $-v'N$ or $vN$ before time $N^5$})
> 1-e^{-N}.
$$

\medskip

\noindent {\bf Step 2.  } We have the following two inequalities: 
\begin{align*} \Prob(Z_t  \text{ visits } -v'N \text{ before visiting } vN ) 
&\leq 0.89, \\
\Prob(Z_t  \text{ visits } vN \text{ before visiting } -v'N ) 
&\leq 0.89.
\end{align*} 
We prove the first estimate, the second one can be proved
similarly. 
By \eqref{LeftRight}
$$
\Prob(Z  \text{ visits } (-v'N)\text{ before visiting }  vN ) =
{\frac{M(vN)}{M(vN)-M(-v'N)}}.
$$
Using  ($\mathcal C_3c$), we get:
%\begin{align*}
$$
M(vN)=
\sum_{j=1}^{vN}e^{\Sigma (j)} \leq
\sum_{j=1}^{vN} e^{\BS (j)} \leq 
\sum_{l=1}^{ vN/Q+1 } \sum_{j=1}^{Q} e^{\BS (j+(l-1)Q)} 
%\approx lM(Q) e^{Q^{-1/2}}
\leq \left(\frac{vN}{Q}+1\right)M(Q) (1+2Q^{-1/2}). $$
%\end{align*}
In the same way,
$$
M(uN)=
\sum_{j=1}^{uN}e^{\Sigma (j)} =
\sum_{j=1}^{uN} e^{\BS (j)} \geq
\sum_{l=1}^{ uN/Q } \sum_{j=1}^{Q} e^{\BS (j+(l-1)Q)} 
%\approx lM(Q) e^{Q^{-1/2}}
\geq  \frac{uN}{Q}M(Q) (1-2Q^{-1/2}). $$
%\end{align*}

Hence

$$M(vN)\geq M(uN)\geq \frac{uN}{Q}M(Q) (1-2Q^{-1/2}).$$

Similarly 
$$M(-v'N)\leq M(-u'N) \leq -\left(\frac{u'N}{Q}-1\right)M(Q)(1-2Q^{-1/2}).$$

Hence,  
$$
\frac{M(vN)-M(x) }{M(vN)-M(-v'N)} <  \frac{v}{u+u'}+0.01 < 
\frac{0.4}{0.45}+0.01 < 0.89.
$$

\medskip

\noindent {\bf Step 3.  }  By Step 1, with probability $1-e^{-N}$, the
walk starting at 0 reaches 
either $vN$  or $-vN'$ before time $N^5$. By Step 2, it reaches $vN$
before time $N^5$ with probability larger than $0.1$.  
The first part of $(\mathcal C_3a)$ states that
$\Sigma(vN,w_+N) > N^{1/2}$ 
and $\Sigma(vN,w_-N) > N^{1/2 }$. Under this condition, 
Lemma \ref{lem_loc} implies that the walk starting
at $vN$ satisfies  
$$
\Prob\left( Z_T \in [w_-N, w_+N] \right) > 0.1
$$
for all $T\in [N^5, e^{N^{1/4}}]$, 
which implies the desired result. The same argument holds for $-v'N$.  $\hfill \Box$

%%%%%%%%%%%%%%%%%%%%%%%%%%%%%% 
%%%%%%%%%%%%%%%%%%%%%%%%%%%%%%
%%%%%%%%%%%%%%%%%%%%%%%%%%%%%%
%%%%%%%%%%%%%%%%%%%%%%%%%%%%%%

\section{Quasi-periodic environments}
\label{ScConstr}

In this section we return to the study of quasi-periodic random walks. Fix a Liouville number $\a \in  \R \setminus \Q$ and  $\fp \in \cP$, and for $x \in \T$ consider the 
corresponding environment: 
\begin{equation*}
\label{QPEnv}
\sp(j): =\fp(x+j\alpha),
\end{equation*}
as well as the random walk  $(Z_t)_{t\in \N}$ on it, defined by (\ref{DefMCZ}). 

It will be convenient to reformulate the main conditions $\mathcal
C_j$, $j=1,2,3$, in this new context. 
First of all, notice that
condition $\mathcal C_1=\mathcal C_1(N)$ does not depend on $\eps$,
while the other two conditions do. For the uniformity of
notations, we formally include an $\eps$ in all the
three conditions. 
We say that 
$$
x\in \mathcal C_j(\fp, N,\eps)\ \text{ if and only if } 
\sp \in \mathcal C_j(N,\eps), \quad j=1,2,3.
$$

The goal of this section is to prove the following statement.

\begin{theorem}
\label{Cond_to_behaviors}
For any Liouville $\alpha$ there exists a dense $G_\delta$ set 
$\cR \subset \cP$ with the following property: for any $\fp\in \cR$,  
for almost every $x\in \T$,  there are strictly increasing sequences 
of numbers $N_{j,n}$, such that for all $j=1,2,3$, $n\in \NN$ we have  
$$
x\in \mathcal C_j(\fp, N_{j,n}, 1/ n) .  
$$ 
\end{theorem}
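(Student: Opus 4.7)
The plan is to realize $\cR$ as a countable intersection of explicit dense open subsets of $\cP$, then extract the required sequences $N_{j,n}$ on an almost-sure set of $x$ via Borel--Cantelli. For $j \in \{1,2,3\}$ and $n, K \in \N$, set
\[
\cU_{j,n,K} \;=\; \bigl\{ \fp \in \cP \,:\, \exists\, N \geq K \text{ with } \Leb\bigl(\{x \in \T : x \in \mathcal{C}_j(\fp, N, 1/n)\}\bigr) > 1 - 2^{-K} \bigr\}.
\]
Each condition $\mathcal{C}_j(\fp, N, 1/n)$ is a finite conjunction of strict inequalities involving the ergodic sums $\Sigma_x(k)$ for $|k|$ at most polynomial in $N$; these depend continuously on $\fp$ in $C^0$, so for fixed $N$ the admissible $x$-set is open and its Lebesgue measure is lower semicontinuous in $\fp$. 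Hence $\cU_{j,n,K}$ is open in $\cP$, and I define $\cR := \bigcap_{j,n,K} \cU_{j,n,K}$, a $G_\delta$; it remains to prove that it is dense.

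Once density is established, fix $\fp \in \cR$. For each triple $(j,n,K)$ choose $N_{j,n,K} \geq K$ realizing the defining property, so the complement of the good $x$-set has measure less than $2^{-K}$. Since $\sum_K 2^{-K} < \infty$, Borel--Cantelli gives for each $(j,n)$ a full-measure set $S_{j,n}(\fp) \subset \T$ on which $x \in \mathcal{C}_j(\fp, N_{j,n,K}, 1/n)$ for all sufficiently large $K$. Intersecting the countably many $S_{j,n}(\fp)$ produces a full-measure $S(\fp) \subset \T$, and for each $x \in S(\fp)$ a diagonal choice of $K_n(x)$ growing fast enough in $n$ yields strictly increasing sequences $N_{j,n}(x) := N_{j,n,K_n(x)}$ with $x \in \mathcal{C}_j(\fp, N_{j,n}(x), 1/n)$ for all $j$ and $n$.

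The heart of the argument is density of each $\cU_{j,n,K}$, and this is where the Liouville nature of $\al$ enters. Given $\fp_0 \in \cP$ and a $C^\infty$-neighborhood $V$ of $\fp_0$, the plan is twofold. First, approximate $\fp_0$ in $V$ by a smooth multiplicative coboundary $\bar\fp \in \cB_\al$, using the standard fact that $\cB_\al$ is $C^\infty$-dense in $\cP$ for every irrational $\al$. For such $\bar\fp$ the ergodic sums $\Sigma^{\bar\fp}_x(k) = \ln g(x+\al) - \ln g(x+(k+1)\al)$ are uniformly bounded in $k$ and $x$, providing a controlled baseline on top of which to superimpose the desired behavior. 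Second, perturb $\bar\fp$ by a small $C^\infty$ function $e = e_{j,n,K}$, keeping $\bar\fp + e \in \cP \cap V$, chosen so that the ergodic sums of $\ln \fq - \ln \fp$ for $\bar\fp + e$ exhibit, at some scale $N \geq K$, the geometric profile required by $\mathcal{C}_j$ on a set of $x$ of measure $> 1 - 2^{-K}$: a sharp local minimum of the potential at the origin for $j=1$, a uniform decrease on a long range dominating its small-scale oscillations for $j=2$, and the two-well configuration of Figure~\ref{figC3} for $j=3$. The construction of $e$ is a typical Liouville argument: using a very good rational approximant $p_m/q_m$ of $\al$, take $e$ essentially a trigonometric polynomial with spectrum concentrated near $q_m$ and amplitude tuned against $|\al - p_m/q_m|$, so that its Birkhoff sums $\sum_{k<N} e(x+k\al)$ acquire the prescribed monotonicity and sharp extrema at scales $N$ of order $q_{m+1}$. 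This is the content of the forthcoming Proposition~\ref{PropCond} and constitutes the principal obstacle: one must simultaneously control the $C^\infty$ size of $e$, tune the shape of the potential on a large proportion of $x$, meet the almost-periodicity clauses $(\mathcal{C}_2 c)$ and $(\mathcal{C}_3 c)$, and preserve both the symmetry condition \eqref{eq_sym} and the range $(0,1)$.
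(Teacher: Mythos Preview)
Your scheme has a genuine obstruction at the very first step: the sets $\cU_{1,n,K}$ are \emph{empty} as soon as $K\geq 2$, so their intersection cannot be dense. The reason is the telescoping identity
\[
\Sigma_{x+N\alpha}(-N)\;=\;\sum_{j=-N+1}^{0}\bigl(\ln\fp-\ln\fq\bigr)(x+N\alpha+j\alpha)
\;=\;-\,\Sigma_x(N),
\]
valid for every $\fp$ and every $N$. Hence the set $\{x:\Sigma_x(-N)>\sqrt{N}\}$ is disjoint from the rotate by $N\alpha$ of $\{x:\Sigma_x(N)>\sqrt{N}\}$, and by translation invariance of Lebesgue measure the two sets together have total measure at most $1$. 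Consequently
\[
\Leb\bigl(\{x:\ x\in\mathcal C_1(\fp,N)\}\bigr)\;\leq\;\tfrac12
\]
for \emph{every} $\fp$ and \emph{every} $N$, so no $\fp$ can satisfy your requirement $\Leb(\cdot)>1-2^{-K}$ once $K\geq 2$. The first (convergent) Borel--Cantelli route you planned is therefore unavailable in principle, not merely hard.

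The paper's proof confronts exactly this: it only asks that the good set of $x$'s have measure $\geq 0.01$, by taking for it an explicit union $U_{j,n}$ of $q_n$ equally spaced arcs of length $\geq 0.01/q_n$ (Proposition~\ref{PropCond}). One then defines $\cR_j=\bigcap_m\bigcup_{n\geq m}\cR_{j,n}$ with $\cR_{j,n}=\{\fp: U_{j,n}\subset\mathcal C_j(\fp,N_{j,n},1/n)\}$, and the almost-sure statement comes from the \emph{second} (divergent) Borel--Cantelli lemma in the conditional form of Neveu: along a subsequence $l_n$ with $q_{l_{n+1}}\gg q_{l_n}$ the arc systems $U_{j,l_n}$ at different scales are approximately independent, so that
\[
\Leb\Bigl(U_{j,l_n}\,\Big|\,\bigcap_{i<n}U_{j,l_i}^{\,c}\Bigr)\;\geq\;\tfrac12\,\Leb(U_{j,l_n})\;\geq\;0.005,
\]
and the divergence of this series forces a.e.\ $x$ to lie in infinitely many $U_{j,l_n}$. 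The density step (coboundary approximation plus a Liouville perturbation $e_n$) is as you outlined, but the measure you can hope to capture at any single scale is bounded away from $1$, and the quasi-independence across scales is the missing ingredient in your argument.
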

By the results of Propositions
\ref{prop.localization}--\ref{prop.nolimit}, 
this will suffice, 
to prove Theorem~\ref{ThFlMV}, see \S \ref{Sec_A} for details.

\subsection{The $G^\delta$ argument.} 
\label{SSRedCob}
In \S \ref{SSIM} we denoted by $\cB_\al \subset \cP$ the set of  
(multiplicative) coboundaries, i.e., 
functions $\bar \fp(x)$ such that 
\eqref{MultCoB} has a solution with a smooth transfer function $g$. 
We noticed after formula \eqref{eq.sigma.x} that
if  $\bar \fp\in \cB_\al$, 
then%$\Sigma_x(n) $ can be expressed as
$$
\Sigma_x(n)= \ln g(x+\a)- \ln g(x+(n+1)\a).
$$
In particular, for all $n$, $|\Sigma_x(n)|$ is bounded by a constant
independent of $n$, hence none of the criteria from the previous section
holds for $\bar \fp\in \cB_\al.$
%(In fact, the motion of the walker in bounded potential is diffusive, see \cite{DG4, DG6}.)
The advantage of coboundaries however, is
that due to the simple formulas for their ergodic sums one can develop a reasonable 
perturbation theory and check our criteria for perturbations $\bar\fp$ of $\fp$ on appropriate
subsets of $\T.$

%it is reasonable to try to perturb a smooth coboundary  $\bar \fp$ into $\fp$
%for which we can check, for some good choices of $x \in \T$, the criteria involving $\Sigma_x(n)$ that lead
%to the erratic behavior of Theorem~\ref{ThFlMV}.

Thus, we start by proving that the set of  coboundaries
are dense in $\mathcal P$. %In the rest of the section we will construct arbitrarily small perturbations of coboundaries that do satisfy conditions $C_j(N,\eps)$, $j=1,2,3$, and hence exhibit erratic behavior.

\begin{lemma} \label{coboundaries} For any $\alpha\in \R\setminus
 \QQ$, the set $\cB_\al$ of smooth multiplicative coboundaries
is dense in $\cP$ for the $C^\infty$ topology.

Similarly, any  $\fp \in C^\infty(\Tor,(0,1)) \cap \cP^c$ can be approached in the $C^\infty$ topology by $\bar \fp$ such that for $\bar \fq=1-\bar \fp$ and some constant $c\neq 0$, there exists $\psi \in  C^\infty(\T,\R)$ such that 
$$\ln \bar \fq(x) -\ln \bar \fp(x) = c+\psi(x+\a)-\psi(x).$$
\end{lemma}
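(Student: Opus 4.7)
Both statements will be reduced to a single $C^\infty$-approximation question for the function $\phi := \ln \fq - \ln \fp$ and then solved by Fourier truncation.

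Set $c_0 := \int_\T \phi \in \R$, so that $\fp \in \cP$ corresponds to $c_0 = 0$ and $\fp \in \cP^c$ to $c_0 \neq 0$. Since the map $\bar \phi \mapsto 1/(1 + e^{\bar \phi})$ is smooth from $C^\infty(\T,\R)$ to $C^\infty(\T,(0,1))$, any $C^\infty$-approximation of $\phi$ by some $\bar\phi_N$ produces a $C^\infty$-approximation of $\fp$ by $\bar \fp_N := 1/(1+e^{\bar\phi_N})$, with $\ln(1-\bar\fp_N) - \ln \bar\fp_N = \bar\phi_N$. Moreover, the multiplicative coboundary condition $\bar\fq/\bar\fp = g(x)/g(x+\a)$ translates, via $\psi := -\ln g$, into the additive relation $\ln\bar\fq - \ln\bar\fp = \psi(x+\a) - \psi(x)$. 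Both parts of the lemma therefore reduce to the following claim: \emph{for any irrational $\a$ and any smooth real $\phi$ on $\T$, one can approximate $\phi$ in $C^\infty$ by functions of the form $c_0 + \psi_N(x+\a) - \psi_N(x)$ with $\psi_N \in C^\infty(\T,\R)$.}

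For this I would take $\bar \phi_N$ to be the $N$-th Fourier truncation of $\phi$. Smoothness of $\phi$ implies that its Fourier coefficients $\hat\phi(k)$ decay faster than any polynomial, so the partial sums $\bar \phi_N(x) := c_0 + \sum_{0 < |k| \leq N} \hat\phi(k) e^{2\pi i k x}$ converge to $\phi$ in $C^\infty(\T)$. Since $\a \notin \Q$, the numbers $e^{2\pi i k \a} - 1$ are nonzero for $0 < |k| \leq N$, so the trigonometric polynomial
\[
\psi_N(x) := \sum_{0 < |k|\leq N} \frac{\hat\phi(k)}{e^{2\pi i k \a}-1}\, e^{2\pi i k x}
\]
is well defined, and pairing the terms $k$ and $-k$ with $\overline{\hat\phi(k)} = \hat\phi(-k)$ shows that $\psi_N$ is real-valued. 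A direct computation gives $\psi_N(x+\a) - \psi_N(x) = \bar\phi_N(x) - c_0$. Setting $\bar\fp_N := 1/(1+e^{\bar\phi_N})$ then completes the proof: in the case $c_0 = 0$ we get $\bar\fp_N \in \cB_\a$ with transfer function $g_N := e^{-\psi_N}$, and in the case $c_0 \neq 0$ we obtain $\ln\bar\fq_N - \ln\bar\fp_N = c_0 + \psi_N(x+\a) - \psi_N(x)$ with $c := c_0 \neq 0$, as required.

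The reason Liouville frequencies pose no obstruction here — unlike in most of the rest of the paper — is worth flagging. For Liouville $\a$, the formal infinite sum $\sum_{k\neq 0} \hat\phi(k)/(e^{2\pi i k\a}-1)\, e^{2\pi i k x}$ typically diverges, so the cohomological equation $\psi(x+\a) - \psi(x) = \phi - c_0$ generically admits no smooth (or even measurable) solution. Each truncation $\psi_N$, however, is a \emph{finite} sum and so is automatically smooth regardless of how small the denominators become, while the required $C^\infty$-convergence $\bar\phi_N \to \phi$ depends only on the smoothness of $\phi$. Thus there is no real obstacle in this lemma; the arithmetic of $\a$ will only enter later, when one tries to engineer quantitative properties of such approximations against the criteria of Section \ref{ScAbstPot}.
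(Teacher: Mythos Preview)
Your proof is correct and follows essentially the same approach as the paper: approximate $\ln(\fq/\fp)$ in $C^\infty$ by its Fourier truncations, solve the additive cohomological equation explicitly on each finite truncation, and pull back to $\bar\fp$ via $\bar\fp = 1/(1+e^{\bar\phi})$. Your version is more explicit (giving the formula for $\psi_N$ and verifying it is real-valued) and your closing remark on why the small divisors do not obstruct this particular step is a worthwhile addition.
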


\begin{proof}
Let $\alpha\in \R\setminus \QQ$. By truncating the Fourier series of $\ln
(\fq/\fp)$ it is possible to approximate it in the $C^\infty$ topology by
coboundaries of the form $\psi(\cdot)-\psi(\cdot+\a)$ where $\psi \in
C^\infty(\T,\R)$. Hence $F(\cdot)= g(\cdot)/g(\cdot+\a)$ where $g=e^{\psi}$
%will approach 
can be made arbitrary close to $\fq/\fp$. Now define $\bar \fp = 1/(1+F)$ and observe that
$\bar \fp$ approximates $\fp$ and $\bar \fp \in \cB_\al$. 

 The case $\fp \in C^\infty(\Tor,(0,1)) \cap \cP^c$ is treated similarly except that $\ln
(\fq/\fp)$ is approximated by  $c+ \psi(\cdot)-\psi(\cdot+\a)$ with $c\neq 0$. 
\color{bleu1}  \end{proof} \black

To prove Theorem \ref{Cond_to_behaviors}, we will construct 
explicit sequences of  
functions $e_n$, numbers $N_{j,n}$ and sets $U_{j,n}$ such that  
the following statement holds true:
\begin{proposition}\label{PropCond}
For any Liouville  number $\alpha$ there exists
\begin{itemize}
\item A strictly increasing sequence of integers $q_n$, 
\item  An explicit sequence of {$C^\infty$}
functions $e_n$ satisfying 
$|e_n|_{C^{n-1}}<1/n$ (see \S \ref{Sec_en}),
 
\item  For every $j \in \{1,2,3\}$,  a sequence of numbers $N_{j,n}$, 

\item For every $j \in \{1,2,3\}$, a sequence of sets 
$\DS U_{j,n}=\bigcup_{i
    \in [0,q_n-1]} (I_{j,n} +i/q_n) \subset \T$, where $I_{j,n}$ is
  an interval in $[0,1/q_n]$ of size $|I_{j,n}|> \frac{0.01}{q_n}$
\end{itemize}
with the following property. For every $\bar \fp\in \cB_\al$, for
every sufficiently large $n$ 
$$
U_{j,n} \subset \C_j\left(\bar\fp+e_n, N_{j,n}, \frac{1}{n}\right) .  
$$
\end{proposition}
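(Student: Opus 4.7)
The strategy is driven by the Liouville hypothesis: pick a rapidly growing sequence $q_n$ along the denominators of the continued fraction of $\alpha$, with $\|q_n\alpha\|<q_n^{-\tau_n}$ for a freely chosen $\tau_n\to\infty$. This makes the orbit $\{x+k\alpha\}_{k=0}^{K}$ behave, for $K$ any power of $q_n$, like an almost $q_n$-periodic lattice on $\T$: writing $k=rq_n+s$ with $|s|<q_n$ gives $x+k\alpha\approx x+s\alpha\pmod 1$ up to an error bounded by $|r|\|q_n\alpha\|$, which is negligible on any polynomially large window. Consequently, if a perturbation $e_n$ is supported on scale $1/q_n$, its Birkhoff sum along $R_\alpha$ is essentially the product of a ``single-loop contribution'' times the number of loops, up to Liouville-controlled errors. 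This is the mechanism by which small perturbations of a bounded-potential coboundary can push the sums $\Sigma_x(k)$ into any of the regimes demanded by $\C_1,\C_2,\C_3$.

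\medskip

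\textbf{Construction of $e_n$.} I would write $e_n=e_n^{(1)}+e_n^{(2)}+e_n^{(3)}+\delta_n$ where each $e_n^{(j)}$ targets condition $\C_j$ at its own scale $N_{j,n}$ (chosen as distinct, well-separated powers of $q_n$ so that the three contributions do not interfere), and $\delta_n$ is a small zero-averaging correction that restores symmetry, i.e.\ $\bar\fp+e_n\in\cP$. Each $e_n^{(j)}$ is built from smooth bump functions of width comparable to $1/q_n$ on $\T$, with prescribed signs and amplitudes so that the induced increment $\phi_{e_n^{(j)}}(y)=\ln\frac{1-\bar\fp(y)-e_n(y)}{\bar\fp(y)+e_n(y)}-\bar\phi(y)$ has the exact mean-shape required to build the potential profile of the corresponding criterion. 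The amplitudes are tuned to satisfy $|e_n|_{C^{n-1}}<1/n$; since derivatives of a bump of width $1/q_n$ and height $\eta_n$ of order $k$ are of size $\eta_n q_n^k$, this forces $\eta_n\le q_n^{-(n-1)}/n$, which is harmless because the number of orbit-loops inside $[0,N_{j,n}]$ can be made as large as any positive power of $q_n$ thanks to the Liouville choice.

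\medskip

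\textbf{Verification of the three conditions and choice of $U_{j,n}$.} Since $\bar\fp$ is a coboundary, its contribution to $\Sigma_x$ is bounded by $\|\psi\|_\infty$ uniformly in $x$, so all the action happens in the $e_n$-part. For a starting point $x$ in a translate $I_{j,n}+i/q_n$, the orbit piece $\{x+k\alpha\}_{k=0}^{N_{j,n}}$ fills the approximate lattice in a predictable way, and I would choose $I_{j,n}\subset[0,1/q_n]$ of length $>0.01/q_n$ so that the orbit enters the prescribed bumps of $e_n^{(j)}$ in the correct pattern. Concretely: for $\C_1$, one sees a V-shaped well at the origin, making $\Sigma_x(\pm N_{1,n})>\sqrt{N_{1,n}}$ forward and backward; for $\C_2$, one sees a monotone descent at scale $L=q_n$ with the approximate periodicity $(\C_2 c)$ following directly from $\|q_n\alpha\|$ being small; for $\C_3$, one manufactures a double-well shape by placing two localized ``bump-packages'' symmetrically, the periodic background $\bar\Sigma$ of $(\C_3 c)$ arising precisely from the $q_n$-periodic structure inherited from $e_n^{(3)}$ via the Liouville estimate, and the residual $B(k)$ encoding the two wells.

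\medskip

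\textbf{Main obstacle.} The key difficulty is the combination of three constraints on the single function $e_n$: (a) it must be $C^{n-1}$-small, (b) $\bar\fp+e_n$ must remain in $\cP\subset C^\infty(\T,(0,1))$, and (c) its ergodic sums must exhibit three qualitatively different shapes on three different scales $N_{1,n}, N_{2,n}, N_{3,n}$, all uniformly in $\bar\fp\in\cB_\alpha$. Disjointness of the supports of the $e_n^{(j)}$ handles (c), while the Liouville estimate $\|q_n\alpha\|\ll q_n^{-\tau_n}$ absorbs both the drift amplification needed for (c) and the smallness required for (a). The symmetry constraint (b) is enforced by the auxiliary correction $\delta_n$, which must itself be made small in $C^{n-1}$; this is possible because $\int_\T\phi_{e_n}$ can be controlled by choosing the bumps in $e_n^{(j)}$ with carefully balanced signs on the nose, leaving only a perturbatively small residual to be cancelled by $\delta_n$. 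Once the bookkeeping is arranged so that all three scales fit under the Liouville gap of the \emph{same} denominator $q_n$, the construction yields all conclusions simultaneously, uniformly in $\bar\fp$.
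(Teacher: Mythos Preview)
Your strategy is essentially the paper's: exploit the Liouville approximation to make a $1/q_n$-periodic perturbation whose Birkhoff sums are computable as Riemann sums, choose amplitude $\sim q_n^{-n}$ to meet the $C^{n-1}$ constraint, and restore symmetry by a small auxiliary correction. The paper, however, does \emph{not} decompose $e_n$ into three pieces with disjoint supports. It uses a single profile $e_n(x)=q_n^{-n}\tilde e_{n,\delta_n}(q_nx)$ where $\tilde e_n$ vanishes on $[-\tfrac14,\tfrac14]$ and equals $\sin 8\pi y$ on most of the complement; the three conditions are then verified by placing the starting intervals $I_{1,n},I_{2,n},I_{3,n}$ at phases $3/8$, $5/16$, $0$, so that the orbit of length $N_{j,n}$ sees the appropriate portion of the \emph{same} graph.

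Your ``disjoint supports handle non-interference'' claim has a real gap at the largest scale. For the $\C_3$ profile to emerge, the orbit of length $N_{3,n}$ must sweep a phase window of order one (this forces $N_{3,n}\sim 1/\|q_n\alpha\|$, not merely a polynomial in $q_n$ as you suggest), and hence it necessarily crosses the supports of your $e_n^{(1)}$ and $e_n^{(2)}$. A V-well planted by $e_n^{(1)}$ then violates the no-trap inequality $(\C_3 b)$, and a net drift from $e_n^{(2)}$ breaks the decomposition required in $(\C_3 c)$. The paper's single-function design is precisely what makes these constraints compatible: the zero region on $[-\tfrac14,\tfrac14]$ gives the balanced middle of $(\C_3 c)$, the positive lobe on $[\tfrac14,\tfrac38]$ simultaneously furnishes the $\C_2$ drift and one side of the $\C_3$ double well, and the sign change at $\tfrac38$ produces the $\C_1$ trap. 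As a secondary point, your choice $L=q_n$ in $\C_2$ is too small: one gets $|\Sigma_x(-L)|\lesssim L\cdot q_n^{-n}=q_n^{1-n}\ll\sqrt L$; the paper takes $L=q_n^{n^2}$.
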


%{\blue IN THE PROOF $e_n$ DEPENDS ON $\overline\fp$ (BUT $U_{j,n}$ DO NOT).}

Before proving  Proposition \ref{PropCond} we show how it
implies  Theorem \ref{Cond_to_behaviors}.

\bigskip \noindent {\bf \color{bleu1}  Proof of Theorem \ref{Cond_to_behaviors}.} \ 
Fix a  Liouville  $\al\in\R\setminus\QQ$. Fix any $j \in \{1,2,3\}$. 
{Let $U_{j,n}$ and  $N_{j,n}$ be as in  Proposition \ref{PropCond}. Denote}
$$
{\cR}_{j,n} = \{ \fp \in  \mathcal P \mid  U_{j,n} 
\subset  \mathcal C_j (\fp, N_{j,n}, 1/ n)\}.
$$
By the definitions of the conditions $\mathcal C_j$, the sets $
{\cR}_{j,n} $ are open.   Lemma \ref{coboundaries} and Proposition \ref{PropCond} show that for any 
$m\in \NN$ the set 
 $\bigcup_{n\geq m} \cR_{j,n}$
 is dense.
Hence the set
$$
\cR_j=\bigcap_{m\in \NN} \bigcup_{n\geq m} \cR_{j,n}
$$
is a dense $G_\delta$  set (in $C^r$ topology for any $r\in \naturals$).

Observe now that for $\fp \in \cR_j$ we have that there exists a
strictly increasing sequence $l_n$ such that 
$\fp \in \cR_{j,l_n}$. Recall that $U_{j,l_n}$ has Lebesgue measure
larger than $0.01$ for every $l_n$. Moreover, up to extracting a
subsequence we may assume that  $q_{l_{n+1}} \gg {q_{l_n}}$, so  that 
 $$
\lambda \left(U_{j,l_n} \cap \bigcap_{i=1}^{n-1} U_{j,l_i}^c\right)
\geq \frac{1}{2} 
\lambda (U_{j,l_n})\cdot \lambda \left( \bigcap_{i=1}^{n-1}
  U_{j,l_i}^c\right).
$$ 
Now, an enhanced version of the Borel-Cantelli Lemma 
(see \cite[Chapter IV]{Neveu}) states that if
%if $(\Omega,\mathcal F, \mathbb P)$ is a probability space and if 
events $C_n$ are such that for each $k\geq 1$ 
$$\sum_{n=k}^\infty \mathbb P\left(C_n \left| \bigcap_{j=k}^{n-1} C_j^c\right.\right)=+\infty,$$ 
then with probability 1, infinitely many of those events occur. 
%$\limsup_{n \to \infty} C_n = \Omega ({\rm mod} \mathbb P)$.}
We thus conclude that 
a.e. $x$ belongs to infinitely many $U_{j,l_n}$, thus to infinitely
many $\mathcal C_j(\fp, N_{j,l_n}, 1/n)$.  In conclusion, the set  
$\DS \cR=\bigcap_{j=1}^{3} \cR_j$ satisfies the property required in Theorem \ref{Cond_to_behaviors}. $\hfill \Box$

The rest of Section \ref{ScConstr} is devoted to the proof of Proposition  \ref{PropCond}.

\subsection{Perturbation of a smooth coboundary. The Main construction.}\label{Sec_main_constr}
\subsubsection {\bf Coboundaries.} \label{sec.cob}
Given a smooth coboundary  $\bar \fp\in \cB_\al $ with a transfer function $g(x)$,
for $M\in \NN$, let $\BS_x(M)$ 
be the ergodic sum of $\bar \fp$ defined by formula (\ref{eq.sigma.x}) with
$\fp$ replaced by $\bar \fp$. Thus, for all  $k\leq
k'$ and all $M\in \NN$ we denote:
\begin{equation*}\label{eqA}
\begin{aligned}
&\BS_x(M) =\ln g(x+\a)-\ln g(x+(M+1)\alpha), \\
&\BS_x(k, k') =\ln g(x+(k + 1)\a)-\ln g(x+(k' + 1)\alpha), \\
&A:=\ln  \|g\|+\ln \|1/g\|.
\end{aligned}
\end{equation*}
Then, 
\begin{equation*} \label{sigbar}
 \forall x\in \Tor, \forall M\in \NN
: |\BS_x(M)|\leq A, \quad |\BS_x(k, k')|\leq A, 
\end{equation*}

Moreover, for any smooth coboundary  $\bar \fp$, there exists  $0<\kappa \leq 1/2$ such that 
\begin{equation*}
\label{EllConst}
\kappa \leq \bar\fp(x) \leq 1-\kappa \quad \text{for all }\quad x\in \Tor. 
\end{equation*}
Define
\begin{equation}\label{eq_K}
K(x)=\frac{1}{\bar\fp(x  )}+\frac{1}{\bar\fq(x )},
\end{equation}
and observe that  \begin{equation*}\label{eq_KK}
K(x)\in (2,2/\kappa]. \end{equation*}

\subsubsection {{\bf The sequences} $q_n$ {\bf and} $N_{j,n}$}
Given a Liouville  number $\al\in\R\setminus\QQ$, let 
$q_n$ be a sequence of integers satisfying
\begin{equation}\label{def_qn}
\eta_n:= |q_n\al| < q_n^{-n^n},
\end{equation}
where $|\cdot|$ denotes the closest distance to integers.
Moreover, for each $n$ we will need to choose 
$q_n$ sufficiently large for our arguments to hold.

Denote the integer closest to $q_n\al$ by $s_n$. In the constructions below
we assume that  $q_n\al>s_n$ for all $n$. If $q_n\al < s_n$, 
the arguments are the same up to a suitable change of signs.

Denote 
\begin{equation}\label{def_Tn}
\begin{aligned}
&N_{n}:=\left[  (q_n\eta_n)^{-1} \right]q_n, \\
&N_{1,n}:=\left[N_n/20 \right],\quad N_{2,n}:= q_n^{n^5}, \quad N_{3,n}: = N_{n},
\end{aligned}
\end{equation}
were $[a]$ stands for the integer part of $a$.

We make the following useful observation on the combinatorics of the irrational rotation $R_\a$ on the circle. The orbit of any fixed point $x$ of the circle  under the
rotation by $\alpha$ on $\T$
is essentially distributed in the following way. The points $x,x+\alpha,
x+2\alpha,\dots x+(q_n-1)\alpha$ are very close (closer than $\eta_n$) to $x,x+\frac{p_n}{q_n},
x+2\frac{p_n}{q_n},\dots x+(q_n-1)\frac{p_n}{q_n}$.   Hence, there will be one point of this $q_n$ piece of orbit in each {\it basic interval} $[k/q_n, (k+1)/q_n]$, $k=0,\dots q_n-1$. Moreover, the first return of $x$ to its basic interval will be shifted by $\eta_n$. The next return will thus be shifted by one more $\eta_n$. Finally, the orbit $x,x+\alpha, x+2\alpha,\dots, x+N_{n}\alpha$ will form an 
 $\eta_n$-grid inside each basic interval, and thus in the whole circle.

\subsubsection  {{\bf The functions} $e_n$} \label{Sec_en} 
In this section, the names of functions with the
shortest 
period 1 are
marked with a tilde, while $\frac{1}{q_n}$-periodic functions have no
tilde in their name. 

Let  $\widetilde e_n(x)\in
C^{\infty}$ be a 1-periodic function satisfying 
$\int_\Tor \widetilde e_n(x) dx=0$ and such that
%{ DO WE NEED TO REQUIRE $\|\widetilde e_n\|_{C^k}\leq D_k n^{2k}$
%FOR $k\leq n$ OR SOMETHING LIKE THAT???}
%{\blue I think that there is no need for that, because $q_n$ is chosen after $n%$ is fixed and can be arbitrarily large}

$$
\widetilde e_n(x)=
\begin{cases} 
\sin 8\pi x  & \text{for } x\in  [-\frac12+ \frac{1}{n^2} ,-\frac38- \frac{1}{n^2} ]
\cup [-\frac38+ \frac{1}{n^2}, -\frac14-\frac{1}{n^2} ] \cup \\
    & \quad \quad  \  \quad[\frac14+ \frac{1}{n^2}, \frac38-
    \frac{1}{n^2} ]
\cup[\frac38+ \frac{1}{n^2} ,\frac12-\frac{1}{n^2} ] 
  , \\
0  & \text{for } x\in  [-\frac14, \frac14], \\
\text{increasing } & \text{on the intervals \ } [-\frac{1}{2},-\frac{1}{2}+ \frac{1}{n^2}], [-\frac{1}{4}- \frac{1}{n^2},-\frac{1}{4}], [\frac{1}{4},\frac{1}{4}+ \frac{1}{n^2}], [\frac{1}{2}- \frac{1}{n^2},\frac{1}{2}], \\
\text{decreasing } & \text{on the intervals \ } [-\frac38- \frac{1}{n^2},-\frac38+ \frac{1}{n^2}], 
[\frac38- \frac{1}{n^2},\frac38+ \frac{1}{n^2}],\\ 0 \text{ and } 
\infty\text{-flat}
&\text{at } x=\pm\frac38. \end{cases}
$$
Observe that $\widetilde e_n$ is also flat at $\pm\frac14$ since it is smooth.
Figure \ref{function_e} represents the function~
$\widetilde e_n$.

{  \begin{figure}[thb] 
        \psfrag{I1}{$I_3$}
 	 \psfrag{I2}{ $I_2$}
 	 \psfrag{I3}{ $\ \, I_1$}
%%%%	 \psfrag{I2-}{\small $I_2'$}
	 \psfrag{1}{\small $1$}
	 \psfrag{12}{\small $\frac12$}
	 \psfrag{-12}{\small $\! \! -\frac12$}
	 \psfrag{14}{\small $\frac14$}
 	 \psfrag{38}{\small $\frac38$}
    	\center{ \includegraphics[height=1.6in]{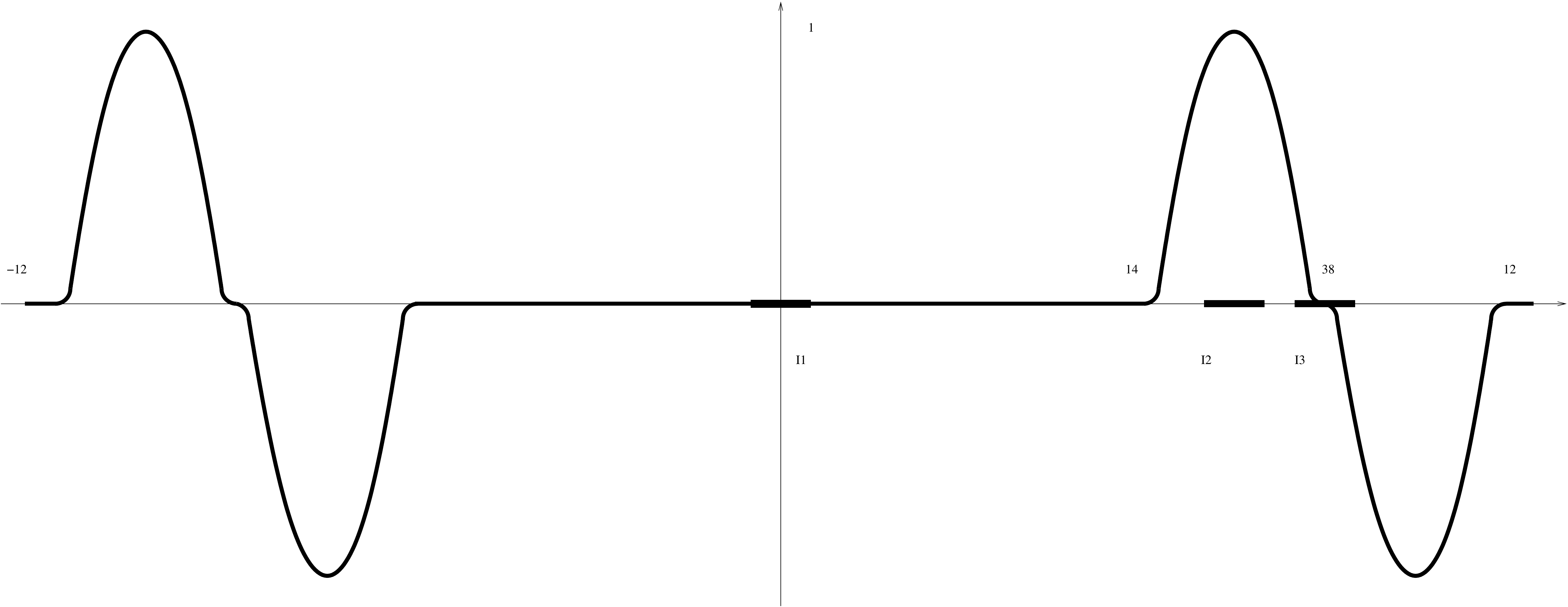}}
    	\caption{\small Graph of $\widetilde e_n(x)$. The intervals $I_1$, $I_2$ and $I_3$ are the sets for which the conditions $\mathcal C_1$, $\mathcal C_2$ and $\mathcal C_3$ hold (Lemmas \ref{lem_cond1}--\ref{lem_cond3}).}
    	\label{function_e}
\end{figure}}

The idea is to perturb a given smooth coboundary  $\bar \fp$ 
by a function of the form $q_n^{-n}\widetilde e_n(q_nx) $   
to produce the desired behavior of the walk. For each $n$ we will
choose $q_n$ satisfying (\ref{def_qn}) and sufficiently large. 
In particular, although the $C^k$ norms of 
$\widetilde e_n$ may grow fast as $n$ grows, we can still guarantee that 
$\|q_n^{-n}\widetilde e_n(q_nx)\|_{C^{n-1}}
<  \frac{1}{n}$. 

 A small problem is that the
perturbed function  
$\fp(x) = \bar \fp (x)+ q_n^{-n}\widetilde e_n(q_nx)$ may not satisfy
the symmetry
condition (\ref{eq_sym}).  Below we modify $\widetilde e_n(x)$ in order
to assure condition (\ref{eq_sym}) for $\fp$. 
Let $\widetilde e_n^+(x)$ and $\widetilde e_n^-(x)$ be the positive and
negative  parts of $\widetilde e_n(x)$:
$$
\widetilde e_n^+(x) = 
\begin{cases}
\widetilde e_n(x) \quad  \text{ if } \widetilde  e_n(x) \geq 0, \\
0, \quad  \hspace{0.55cm}  \text{ otherwise,}
\end{cases}  \quad
\widetilde e_n^-(x) = 
\begin{cases}
-\widetilde e_n(x) \quad  \text{ if } \widetilde  e_n(x) < 0, \\
0, \quad  \quad \hspace{0.6cm}  \text{otherwise. }
\end{cases}
$$
For $\delta \in [-1,1]$, define $\widetilde e_{n,\delta}(x)$:
\begin{equation}\label{def_endn}
\widetilde e_{n,\delta}(x) = 
\begin{cases}
\widetilde  e_n(x) + \delta \widetilde e_n^+(x)  \text{ if } \delta \in [0,1],\\
\widetilde  e_n(x)+\delta \widetilde e_n^-(x)  \text{ if } \delta \in [-1,0).
\end{cases}
\end{equation}
{Note that, since $\widetilde e_n$ is flat at $\pm
  \frac38$ and $\pm\frac14$, where it actually changes the sign, 
the functions  $\widetilde e_{n,\delta}$ are also smooth.  
This is the only reason why we need $\widetilde e_n$ to be flat at those points.}

The following lemma introduces the function $e_n$ that will be the main building block of our
construction. 

\begin{lemma}\label{Lem_est_cn} Given any  $\bar \fp \in \cB_\al$, if $q_n$ is sufficiently large, %and$n>0$, there exists $Q=Q(n)$ such that for each $q_n>Q$ 
there exists $\delta_n \in [-\frac1n,\frac1n] $ satisfying
\begin{equation}\label{def_pn}
\fp_n(x):= \bar \fp (x) + e_n(x) \in \mathcal P \quad  \text{ with } \quad 
e_n(x) = q_n^{-n} \widetilde e_{n,\delta_n}(q_n x) .
\end{equation}
%Moreover, $\lim_{q_n\to\infty} \delta_n=0$.
\end{lemma}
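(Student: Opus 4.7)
The plan is to apply the intermediate value theorem to
\[
F(\delta) := \int_\T \bigl[\ln(\bar\fp(x) + e_{n,\delta}(x)) - \ln(1 - \bar\fp(x) - e_{n,\delta}(x))\bigr]\, dx,
\]
where $e_{n,\delta}(x) = q_n^{-n}\, \widetilde e_{n,\delta}(q_n x)$. Since $\bar\fp \in \cB_\al \subset \cP$ already satisfies $\int_\T[\ln \bar\fp - \ln \bar\fq]\,dx = 0$, the condition $\fp_n := \bar\fp + e_{n,\delta} \in \cP$ is equivalent to $F(\delta) = 0$. For $q_n$ large the perturbation is tiny, namely $\|e_{n,\delta}\|_\infty \leq 2 q_n^{-n} \|\widetilde e_n\|_\infty$, so $\bar\fp + e_{n,\delta}$ stays in $[\kappa/2,\, 1-\kappa/2]$; $F$ is therefore well defined and continuous on $[-1/n, 1/n]$, and the resulting $\fp_n$ automatically lies in $C^\infty(\T,(0,1))$.

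The main step is to estimate $F(\pm 1/n)$. A first-order Taylor expansion of the logarithm, together with the definition \eqref{eq_K} of $K(x) = 1/\bar\fp(x) + 1/\bar\fq(x)$, gives
\[
F(\delta) \;=\; q_n^{-n} \int_\T K(x)\, \widetilde e_{n,\delta}(q_n x)\, dx \;+\; O(q_n^{-2n}).
\]
I would then decompose $\widetilde e_{n,\delta} = \widetilde e_n + \delta\, \widetilde e_n^{\pm}$ according to the sign of $\delta$ (cf.\ \eqref{def_endn}) and handle the two pieces separately. Because $\widetilde e_n$ is smooth with mean zero, a single integration by parts against the smooth $1$-periodic $K$ yields $\int_\T K(x)\, \widetilde e_n(q_n x)\, dx = O(q_n^{-1})$, with constant controlled by $\|K\|_{C^1}$ and the $C^0$ bound on the antiderivative of $\widetilde e_n$. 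For $\widetilde e_n^{\pm}$, the crucial observation is that these are \emph{also smooth}---this is exactly why $\widetilde e_n$ was built to be $\infty$-flat at $\pm 3/8$ and at $\pm 1/4$. The standard Riemann-sum asymptotic for smooth periodic functions then gives
\[
\int_\T K(x)\, \widetilde e_n^{\pm}(q_n x)\, dx \;=\; \Bigl(\int_\T K\Bigr)\Bigl(\int_\T \widetilde e_n^{\pm}\Bigr) \;+\; O(q_n^{-1}).
\]

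Combining these estimates,
\[
F(\pm 1/n) \;=\; \pm\, \frac{1}{n}\, q_n^{-n}\, c_n^{\pm} \;+\; O(q_n^{-n-1}) \;+\; O(q_n^{-2n}),
\]
where $c_n^{\pm} = (\int_\T K)(\int_\T \widetilde e_n^{\pm})$. Since $\widetilde e_n$ coincides with $\sin(8\pi x)$ except on a set of measure $O(1/n^2)$ (on which it remains uniformly bounded), $\int_\T \widetilde e_n^{\pm}$ converges to $\int_0^1 \max(\pm \sin 8\pi x, 0)\,dx > 0$, so $c_n^{\pm} \geq c_0 > 0$ uniformly for $n$ large. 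Choosing $q_n$ sufficiently large (e.g., $q_n \geq n^2$), the leading term dominates both error terms, whence $F(-1/n) < 0 < F(1/n)$, and the intermediate value theorem produces the required $\delta_n \in [-1/n, 1/n]$. The only point requiring vigilance is that the error constants must not depend on growing $C^k$ norms of $\widetilde e_n$; since the argument uses only one integration by parts (involving $\|K\|_{C^1}$, fixed by $\bar\fp$) and $C^0$ bounds on $\widetilde e_n^{\pm}$, this is automatic and the choice of $q_n$ can be made effective for each $n$.
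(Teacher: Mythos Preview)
Your proof is correct and follows essentially the same approach as the paper: Taylor-expand $F(\delta)$ to isolate the linear term $q_n^{-n}\int_\T K(x)\,\widetilde e_{n,\delta}(q_nx)\,dx$, show the $\delta=0$ piece is negligible, and show the $\delta\,\widetilde e_n^{\pm}$ piece has a definite sign, then apply the intermediate value theorem.

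One small simplification worth noting: for the $\delta$-term you invoke the Riemann-sum asymptotic $\int_\T K(x)\,\widetilde e_n^{\pm}(q_nx)\,dx = (\int K)(\int \widetilde e_n^{\pm}) + O(q_n^{-1})$, but the paper just uses pointwise positivity: since $K>2$ and $\widetilde e_n^{\pm}\ge 0$, one has $\int_\T K(x)\,\widetilde e_n^{\pm}(q_nx)\,dx \ge 2\int_\T \widetilde e_n^{\pm}(q_nx)\,dx = 2\int_\T \widetilde e_n^{\pm}$, which is bounded below uniformly in $n$. This bypasses any regularity of $\widetilde e_n^{\pm}$ in the estimate. (The $\infty$-flatness at $\pm 3/8,\pm 1/4$ is really there to make $\widetilde e_{n,\delta}$---and hence $\fp_n$---smooth, not for the integral bounds; your Riemann-sum step in fact only needs $K\in C^1$, since the averaging is over $K$, not over $\widetilde e_n^{\pm}$.)
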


\begin{proof}
In this proof, we will use the notation  
$$
e_{n,\delta}(x): = q_n^{-n} \widetilde e_{n,\delta}(q_n x).
$$

We are looking for $\delta \in [-1/n,1/n]$ such that $\fp_n(x)$ satisfies the symmetry
condition (\ref{eq_sym}), i.e.,
$$
I_{n,\delta}:=\int_{\T} \ln (\fp_n(x))-\ln (1-\fp_n(x))dx= 
\int_{\T} \ln \left(1+ \frac{ e_{n,\delta}(x)}{\bar \fp(x)} \right)- 
\ln \left(1 -\frac{ e_{n,\delta}(x)}{1-\bar \fp(x)} \right)dx=0
$$ 
We will approximate $I_{n,\delta}$ with 
$$J_{n,\delta} := \int_{\T}  e_{n,\delta}(x) K(x)dx$$
where $K$ is given by \eqref{eq_K}.

\noindent{\sc Claim.} There exists a constant $c>0$ that does not depend on $n$ or $\delta$ such that  
\begin{align}
\label{ll1} |I_{n,\delta}-J_{n,\delta}|&<cq_n^{-2n}\\
\label{ll2} |J_{n,0}|&<c\frac{1}{n^2} q_n^{-n}\\\label{ll3} 
\text{For } \delta>0,   \quad J_{n,\delta} -J_{n,0} &>c \delta q_n^{-n}\\
\label{ll4} \text{For } \delta<0,   \quad J_{n,\delta} -J_{n,0} &<-c \delta q_n^{-n}
\end{align}

\medskip 
From the continuity of $I_{n,\delta}$ and $J_{n,\delta}$ in $\delta$, it follows directly from the claim that there exists $\delta \in (-1/n,1/n)$ such that   $I_{n,\delta}=0$. 

\medskip 

\noindent{\sc Proof of the Claim.}
\eqref{ll1} follows from the fact that    $\max |e_{n,\delta}(x)|\leq
2q_n^{-n}$. 
\eqref{ll2} follows from the fact that the average of $ \widetilde e_n$ is zero and from the fact that $K$ is almost constant on intervals of size $1/q_n$. 
As for \eqref{ll3}, it follows from the fact that $K>2$, and that the average of $ \widetilde e_n^+$ is larger than some positive constant independent of $n$.  \eqref{ll4} is proved similarly. \hfill $\Box$

%Hence we can use Taylor
%expansion: if $q_n$ is large
%enough, the integral above equals 
Lemma \ref{Lem_est_cn} is thus proved. \color{bleu1}  \end{proof} \black

\subsubsection {\bf The sets $U_{j,n}$}
Consider the following subintervals of $[0,1]$: 
$$
I:=\left(-\frac1{200},\frac1{200} \right),
$$
$$
I_{1}:=\frac3{8}+I, \quad
I_{2}:=\frac5{16}+I, \quad I_{3}:=I, 
$$
$$
I'_{1}:=-\frac3{8}+I, \quad I'_{2}:=-\frac7{16}+ I, 
$$
and let  $I_{j,n}=I_j/q_n$, $I'_{j,n}=I_j'/q_n$ for $j=1,2,3$,
\begin{equation}\label{int_U3}
U_{j,n}= \bigcup_{k=0}^{q_n-1} \left( I_{j,n}\cup I'_{j,n}
  +\frac{k}{q_n}\right) , \quad 
j=1,2,\quad
U_{3,n} = \bigcup_{k=0}^{q_n-1} I_{3,n} +\frac{k}{q_n}.
\end{equation}
Notice the total measure of $U_{j,n}$: $|U_{j,n}|=0.02$,  and
$|U_{3,n}|=0.01$.

\subsection{Estimates of ergodic sums.}
\label{SSErgSum}

Recall that  $\fp(x)=\bar\fp(x)+e_n(x)$  (see \eqref{def_pn}), 
and that $N_{j,n}$ is defined by  \eqref{def_Tn}. 

For $\fp(\cdot)$ and $\bar\fp(\cdot)$, we denote by $\Sigma_x(n)$ and $\BS_x(n)$ the potential functions defined in \eqref{eq.sigma.x}.  

{The next statement represents the sums $\Sigma_x(M)$ for a
  large $M$ in the form  \\$\Sigma_x(M)=${\it Main term}$(M)+${\it
    Rest}$(M)$. 
Notice that the {\it Rest}$(M)$ may not be small, see \eqref{est_int}
and \eqref{est_int-}. Nevertheless, these estimates are sufficient for
the proofs in the next subsection. There we will show that, for certain
  values of $x$, 
  the {\it Rest}$(M)$ is asymptotically smaller than
  the {\it Main term}$(M)$ provided that $M$ and $q_n$ are sufficiently large.}

\begin{proposition}[Main technical lemma]\label{prop_estimates} 
Given $\al\in\R\setminus
\QQ$ and a smooth coboundary  
$\bar \fp \in \cB_\al $, let   $A,\kappa$, and $K(x)$ be as in \S
\ref{sec.cob}, and let $\widehat K= \int_\T K(x)dx$. 
Then for all $x\in \Tor$
and all $M\in [0,N_n]$ there exist functions $R(x,M)$ and $R'(x,M)$ satisfying
\begin{equation} \label{eq.R} 
|R(x,M)|,|R'(x,M)|\leq 4\kappa^{-2} M q_n^{-2n}
\end{equation}
such that
\begin{align}\label{main_est}
\Sigma_x(M)&=  - \sum_{m=1}^{M} e_n(x+m\a )K(x+m\a)
+ \BS_x(M) +R(x,M), \\
\label{main_est-}
\Sigma_x(-M)&= \sum_{m=-M+1}^{0} e_n(x+m\al )K(x+m\a)
+ \BS_x(-M)
+R'(x,M) ,
\end{align}
where
{
\begin{equation}\label{est_int}
\sum_{m=1}^{M} e_n(x+m\al ) K(x+m\a) = \widehat K q_n^{-n} N_n \int_{q_n x}^{q_n
  x + M/N_n} \widetilde e_{n,\delta_n}(t) \, dt +M o\left(q_n^{-n}\right), 
\end{equation}
\begin{equation}\label{est_int-}
\sum_{m=-M+1}^{0} e_n(x+m\al ) K(x+m\a) = \widehat K q_n^{-n} N_n \int_{q_n x - M/N_n}^{q_n
  x } \widetilde e_{n,\delta_n}(t) \, dt +M o\left(q_n^{-n}\right), 
\end{equation}
}
Moreover,
\begin{equation} \label{est_cond_b} 
\begin{aligned}
e_n(x+m\alpha)\geq 0 \ \text{ for all } m=k,\dots, k' \ \Rightarrow 
\Sigma_x(k, k')\leq \BS_x(k, k')\leq A  
%\ \text{ for all }   0\leq k \leq \bar k\leq M
\end{aligned}
\end{equation}
\begin{equation} \label{est_cond_b-} 
\begin{aligned}
e_n(x+m\alpha)\leq 0 \ \text{ for all } m=k',\dots , k \ \Rightarrow
\Sigma_x(k, k')\leq \BS_x(k, k')\leq A . 
%\ \text{ for all } -M \leq \bar k \leq k\leq 0
\end{aligned}
\end{equation}

%\end{itemize}

\end{proposition}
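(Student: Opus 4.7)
The approach is a two-step reduction: first Taylor-expand $\ln\fq - \ln\fp$ around the coboundary $\bar\fp$ to isolate a cocycle term plus a Birkhoff sum of $e_n\cdot K$, and then convert that Birkhoff sum into an integral via a two-scale Riemann-sum argument that exploits the periodicity of $e_n$ together with the tiny drift $\eta_n = |q_n\alpha - s_n|$. For the pointwise expansion, writing $\fp = \bar\fp + e_n$ and $\fq = \bar\fq - e_n$ gives
\begin{equation*}
\ln\fq(y) - \ln\fp(y) = [\ln\bar\fq(y) - \ln\bar\fp(y)] - e_n(y)K(y) + \rho(y),
\end{equation*}
with Lagrange remainder $|\rho(y)| \leq 4\kappa^{-2} e_n(y)^2 \leq 4\kappa^{-2} q_n^{-2n}$ (valid since $|e_n|\leq \kappa/2$ for large $q_n$). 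Summing this identity at $y=x+m\alpha$ for $m=1,\ldots,M$ produces \eqref{main_est} with the bound \eqref{eq.R}; the negative-index version \eqref{main_est-} is obtained by the symmetric sum.

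For the integral representation, I use $e_n(y) = q_n^{-n}\widetilde e_{n,\delta_n}(q_n y)$ together with $q_n\alpha \equiv \eta_n \pmod 1$ to rewrite
\begin{equation*}
\sum_{m=1}^M e_n(x+m\alpha)K(x+m\alpha) = q_n^{-n}\sum_{m=1}^M \widetilde e_{n,\delta_n}(q_n x + m\eta_n)K(x+m\alpha),
\end{equation*}
then partition the indices into $\lfloor M/q_n\rfloor$ blocks of length $q_n$ (plus a remainder of length $<q_n$). Within one block the three-distance theorem makes $\{x+m\alpha\}$ an $O(1/q_n)$-grid on $\T$, so $\sum_{m\in\mathrm{block}} K(x+m\alpha) = q_n\widehat K + O(1)$; simultaneously the slow argument $q_nx + m\eta_n$ drifts by at most $q_n\eta_n$ across the block, so $\widetilde e_{n,\delta_n}$ can be pulled out of the block sum with negligible error. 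Assembling the blocks gives a Riemann sum for $\int\widetilde e_{n,\delta_n}$ with mesh $q_n\eta_n$; using $1/\eta_n = N_n(1 + O(q_n\eta_n))$ to identify the endpoints and rewriting the right endpoint as $q_n x + M/N_n$ yields \eqref{est_int}, and the time-reversed manipulation yields \eqref{est_int-}.

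The sign inequalities \eqref{est_cond_b}--\eqref{est_cond_b-} are immediate from monotonicity: if $e_n(x+m\alpha)\geq 0$ on $m\in[k,k']$ then $\fp \geq \bar\fp$ and $\fq \leq \bar\fq$ at each sampled point, so $\ln\fq-\ln\fp \leq \ln\bar\fq-\ln\bar\fp$ there; summing and using the telescoping representation $\bar\Sigma_x(k,k') = \ln g(x+(k+1)\alpha) - \ln g(x+(k'+1)\alpha) \leq \ln\|g\|+\ln\|1/g\| = A$ finishes the bound, and the sign-reversed case is identical. The main obstacle is the two-scale Riemann-sum control of Step 2: the block-level equidistribution of $\{x+m\alpha\}$ against the smooth factor $K$ coexists with the much slower variation of $\widetilde e_{n,\delta_n}(q_nx + m\eta_n)$, which only completes one period after $N_n \approx 1/\eta_n \gg q_n$ steps. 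The in-block error from the smoothness of $\widetilde e_{n,\delta_n}$ scales like $\|\widetilde e_{n,\delta_n}\|_{C^1}\cdot q_n\eta_n$, and although $\|\widetilde e_{n,\delta_n}\|_{C^1}$ grows polynomially in $n$, the Liouville bound $\eta_n < q_n^{-n^n}$ defeats it with wide margin. This is precisely why \eqref{def_qn} is imposed so strongly; the rest of the proof is bookkeeping.
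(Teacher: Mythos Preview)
Your proposal is correct and follows essentially the same route as the paper: Taylor-expand $\ln\fq-\ln\fp$ around $\bar\fp$ to isolate $-e_nK$ plus a quadratic remainder, then block the Birkhoff sum into pieces of length $q_n$, use the near-rational structure $q_n\alpha\approx s_n$ to average $K$ to $\widehat K$ while freezing the slowly-varying $\widetilde e_{n,\delta_n}$, and finally assemble the blocks into a Riemann sum. The only cosmetic difference is that you invoke the three-distance theorem for the in-block equidistribution whereas the paper just compares directly with the grid $\{x+jp_n/q_n\}_{j<q_n}$; the substance is identical.
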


The statement of this lemma covers  several different situations that
will be useful in 
checking all the conditions $\C_j$. 
%In particular, for $x\in I_{2,n}$ and $M= N_{2,n}$, the main term in the right hand side of (\ref{main_est}) is the sum, which is estimated by the integral in (\ref{est_int}). We shall see that in this case $\widetilde e_{n,\delta_n}(t)\geq 0.5$ on the whole interval of integration, and the integral is very large.  
%On the other hand,
%for $x\in I_{3,n}$ and $m \leq  N_n/20$ we have $e_n(x+m\alpha)=
%0$, and hence $\Sigma_x(m)= \BS_x(m)$. 
%This will be important for condition $(\mathcal C_3c)$. %In particular, in this case $\Sigma_x(M)$ is bounded and quasi-periodic by (\ref{eqA}). 

\medskip 

\begin{proof} Recall the notations:  $\fp(x)=\bar\fp(x)+e_n(x)$,
  $K=\frac{1}{\bar\fp(x)}+\frac{1}{\bar\fq(x)}$. Omitting the
  argument $x$, we can write:
$$
\begin{aligned}
&\ln (1-\fp)-\ln \fp = \ln(1-\bar \fp -e_n)  - \ln(\bar \fp +e_n) = \ln (1-\bar
\fp)-\ln \bar \fp + \ln (1-\frac{e_n}{1-\bar \fp})\\ 
&-\ln (1+\frac{e_n}{\bar \fp})= 
\ln \bar \fq -\ln \bar \fp - e_n \left(\frac{1}{1-\bar \fp}+ \frac{1}{\bar \fp}\right)
+ r_n
= \ln \bar \fq -\ln \bar \fp - e_n K + r_n,
\end{aligned}
$$ 
where $|r_n|\leq 2 e_n^2(\bar q^{-2}+\bar p^{-2})\leq \|K^2\| \leq 2
\kappa^{-2}$. The estimate of $|r_n|$
follows from the Taylor expansion of $\ln(1+y)$ for small $y$.

Now we estimate $\Sigma_x(M)$ for any $x\in \T$ and any $M>0$:
\begin{equation}\label{eq_long}
\Sigma_x(M)= \sum_{m=1}^{M} \ln (\fq_n(x+m\al ))-\ln (\fp_n(x+m\al ))= 
\end{equation}
$$
\sum_{m=1}^{M} \ln (\bar\fq(x+m\al ))-\ln (\bar\fp (x+m\al )) - 
\sum_{m=1}^{M} e_n(x+m\al ) K(x+m\a)
+R(x,M)
$$
%where $K(x)=\frac{1}{\bar\fp(x  )}+\frac{1}{\bar\fq(x )} \in(2,2/\kappa]$ for all $x\in \Tor$, 
%because $\bar \fp \in [\kappa,1-\kappa]$, 
and     
$$
|R(x,M)|\leq \left|\sum_{m=0}^{M} e_n^2(x+m\al )
\left(\bar\fq^{-2}(x+m\al  )+\bar\fp^{-2}(x+m\al ) \right) \right| 
\leq 4 \kappa^{-2} M q_n^{-2n},
$$
since $\|e_n\|\leq 2 q_n^{-n}$ and $\bar\fp, \bar\fq \in [\kappa, 1-\kappa]$.
%Since $\bar \fp$ is a smooth coboundary  with the transfer function $g$, by (\ref{MultCoB}) with $\fp$ replaced by $\bar\fp$, we have:$$
%\begin{aligned}
%\BS_x(M)&=\sum_{m=1}^{M} \ln (\bar\fq (x+m\al ))-\ln (\bar\fp (x+m\al ))
%=\ln \frac{g(x+\al)}{g(x+(M+1)\al)} \\
%&\leq  \ln  \|g\|+\ln \|1/g\|=A. 
%\end{aligned}$$
This gives
 \eqref{main_est} and \eqref{eq.R}. The proof of \eqref{main_est-} is similar.

Let us prove \eqref{est_int}. {
By (\ref{def_qn}) and (\ref{def_Tn}), we have $\eta_n=|q_n\al| <
q_{n}^{-n^n}$, and $\eta_n \approx N_{n}^{-1}$. By definition,
$e_n(x+\a)=q_n^{-n} \widetilde e_n(q_n(x+\a))= q_n^{-n} \widetilde
e_n(q_nx +\eta_n ) $. 
For each $j\leq q_n$ we have: 
$$
|e_n(x+j\a) - e_n(x)|= q_n^{-n} |\widetilde
e_n(q_nx+j\eta_n)-\widetilde e_n(q_nx))| \leq q_n^{-n}\eta_n q_n
\text{max}_\T \,|\widetilde e'_n(x)| = o(q_n^{-n}), 
$$
since $\eta_n=q_n^{-n^n}$ by \eqref{def_qn},
and  $\widetilde e_n(x)$ does not depend on $q_n$, see
\S \ref{Sec_en} for the definition of $\widetilde e_n(x)$.
%Due to the $1/q_n$-periodicity of $e_n$ for any $x \in \T$, 
%$$ 
%\widetilde e_n(q_n(x+m\a))=\widetilde e_n(q_n x+m(q_n\a))=
%\widetilde e_n(q_n x)
%+\cO\left(q_n\eta_m \|\widetilde e_n\|_{C^1}\right)=
%\widetilde e_n(q_n x)+o\left(q_n^{-n}\right). 
%$$
Since $\a$ is close to $p_n/q_n$, we get: 
\begin{align*} \sum_{m=1}^{q_n} e_n(x+m\a)K(x+m\a) &=  
(e_n(x)+o\left(q_n^{-n}\right)) \sum_{m=1}^{q_n} K(x+m\a) \\
&= \widehat K  q_n (e_n(x) + o\left(q_n^{-n}\right)).
\end{align*}
Hence, for $M \gg q_n$ we have
\begin{align*}
\sum_{m=1}^{M} e_n(x+m\a) K(x+m\a)&=
\widehat K q_n  \sum_{m=1}^{M/q_n} e_n(x+mq_n\al ) +M \, o(q_n^{-n}) \\
=
\widehat K  q_n N_{n} \sum_{m=1}^{M/q_n} e_n(x+m|q_n\al| )\eta_n +
  M\, o\left(q_n^{-n} \right)
&=
\widehat K  q_n N_{n}\int_{x}^{x+M/(q_n N_{n})}
e_n(t) dt +M\, o\left(q_n^{-n}\right)\\
&= 
\widehat K  q_n^{-n} N_{n} \int_{q_n x}^{q_n x+M/N_{n}}
\widetilde e_{n,\delta_n}(t) dt+M \, o\left( q_n^{-n}\right). 
\end{align*}  
}
 
The proof of \eqref{est_int-} is
similar.

To show (\ref{est_cond_b}), notice that under the assumption
$e_n(x+m\alpha)\geq 0 \ \text{ for all } m=k,\dots, k'$  
we have for these $m$ that  $\fp(x+m\alpha)\geq \bar \fp(x+m\alpha)$, hence
$$
\Sigma_x(k, k')=\sum_{m=k}^{k'} \ln\frac{\fq(x+m\alpha)}{\fp(x+m\alpha)}\leq 
\sum_{m=k}^{k'}\ln\frac{\bar \fq(x+m\alpha)}{\bar \fp(x+m\alpha)} =
\BS_x(M)\leq A.
$$
Estimate (\ref{est_cond_b-}) is proved in the same way.
\color{bleu1}  \end{proof} \black

%The first equality comes from the definition of $\delta$, 
%the second one uses periodicity of $e_n$
%and brings the sum to the form of a Riemann sum. 
%Notice that points $x+m|q_n\al|$, $m=1,\dots, M/q_n$ 
%cover an interval of size $\eta_n M/q_n = M/(q_nN_n)$. 
%So this sum is a Riemann sum for the integral in the following line.
%The last equality comes from the definition of $e_n$ via $\widetilde e_{n,\delta_n}$
%}

\subsection{Proof of Proposition \ref{PropCond}.}\label{Sec_loc}
%{To shorten the notations, we shall sometimes write 
%$h_1(n)\approx h_2(n)$ if  $h_1(n)/h_2(n) \to 1$ when $n\to \infty$.}

\begin{lemma}\label{lem_cond1} 
For 
$n$ sufficiently large, we have: 
$$
U_{1,n}\subset  \mathcal C_1(\fp, N_{1,n}).
$$
\end{lemma}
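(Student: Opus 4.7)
The plan is to apply the Main Technical Lemma (Proposition~\ref{prop_estimates}) with $M=N_{1,n}$ and verify, for every $x\in U_{1,n}$, the two inequalities $\Sigma_x(\pm N_{1,n})>\sqrt{N_{1,n}}$. Set $y:=q_n x\bmod 1$; then $x\in U_{1,n}$ exactly means that $y$ lies within $1/200$ of $+3/8$ or of $-3/8$. Since $N_{1,n}/N_n=1/20+O(1/N_n)$, formulas \eqref{main_est}--\eqref{est_int} and \eqref{main_est-}--\eqref{est_int-} yield
$$\Sigma_x(N_{1,n})=-\widehat K\,q_n^{-n}N_n\,J_+(y)+\mathrm{Err}_+,\qquad \Sigma_x(-N_{1,n})=+\widehat K\,q_n^{-n}N_n\,J_-(y)+\mathrm{Err}_-,$$
where $J_+(y):=\int_y^{y+1/20}\widetilde e_{n,\delta_n}(t)\,dt$, $J_-(y):=\int_{y-1/20}^{y}\widetilde e_{n,\delta_n}(t)\,dt$, and $\mathrm{Err}_\pm$ collects $\BS_x(\pm N_{1,n})$, $R$ (resp.\ $R'$), and the $N_{1,n}\,o(q_n^{-n})$ discretization term.

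The first step is an explicit sign/size analysis of $J_\pm(y)$. For $n$ large enough that $1/n^2<1/200$, both intervals $(y-1/20,y)$ and $(y,y+1/20)$ lie in regions where $\widetilde e_n(t)=\sin(8\pi t)$, so modulo the $\delta_n\widetilde e_n^\pm$ correction (whose contribution to each integral is at most $|\delta_n|/20\leq 1/(20n)$) the identity $\cos A-\cos B=2\sin\tfrac{A+B}{2}\sin\tfrac{B-A}{2}$ gives
$$J_+(y)=\frac{\sin(\pi/5)}{4\pi}\sin(8\pi y+\pi/5)+O(1/n),\qquad J_-(y)=\frac{\sin(\pi/5)}{4\pi}\sin(8\pi y-\pi/5)+O(1/n).$$
At $y=\pm 3/8$ these evaluate to $\mp\sin^2(\pi/5)/(4\pi)$ and $\pm\sin^2(\pi/5)/(4\pi)$ respectively (using $\sin(\pm 3\pi+\pi/5)=\mp\sin(\pi/5)$ etc.), so by continuity there is an absolute constant $c_0>0$ with $J_+(y)\leq -c_0$ and $J_-(y)\geq c_0$ uniformly for $y\in I_1\cup I_1'$ and all $n$ sufficiently large. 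Hence the main contribution to each of $\Sigma_x(\pm N_{1,n})$ is at least $c_0\widehat K\,q_n^{-n}N_n$.

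The second step is to compare this main term with $\sqrt{N_{1,n}}$ and dominate the errors, with everything driven by the Liouville condition \eqref{def_qn}. By \eqref{def_qn}--\eqref{def_Tn}, $N_n\approx\eta_n^{-1}$ with $\eta_n\leq q_n^{-n^n}$, so the main term is at least $c_0\widehat K\,q_n^{-n}\eta_n^{-1}\geq c_0\widehat K\,q_n^{n^n-n}$, while $\sqrt{N_{1,n}}\approx \eta_n^{-1/2}/\sqrt{20}$; the ratio is $\gtrsim q_n^{-n}\eta_n^{-1/2}\geq q_n^{n^n/2-n}\to\infty$. The error terms are swamped: $|\BS_x(\pm N_{1,n})|\leq A$ is bounded; $|R|,|R'|\leq 4\kappa^{-2}N_{1,n}q_n^{-2n}$ is a factor $O(q_n^{-n})$ smaller than the main term; and the $N_{1,n}\,o(q_n^{-n})$ discretization term is similarly negligible via the bound $|e_n(x+j\a)-e_n(x)|\lesssim q_n^{-n}\cdot q_n\eta_n\cdot\|\widetilde e_n'\|_\infty$ used in the derivation of \eqref{est_int}. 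This establishes $\Sigma_x(\pm N_{1,n})>\sqrt{N_{1,n}}$ for every $x\in U_{1,n}$ and every sufficiently large $n$, i.e.\ $U_{1,n}\subset\mathcal C_1(\fp,N_{1,n})$. The main obstacle in writing the proof in full is the bookkeeping of four simultaneous error sources (coboundary bound $\BS$, Taylor remainder $R$, discretization error $o(q_n^{-n})$, and $\delta_n$-perturbation), each of which is dominated only because the Liouville smallness of $\eta_n$ blows up the main term to polynomial size in $q_n^{n^n}$.
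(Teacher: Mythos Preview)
Your approach is essentially the same as the paper's: apply Proposition~\ref{prop_estimates} with $M=N_{1,n}$, reduce the estimate to the sign and size of the integral $\int \widetilde e_{n,\delta_n}$ over an interval of length $1/20$ starting (or ending) near $\pm 3/8$, and then note that the main term $q_n^{-n}N_n$ dominates both $\sqrt{N_{1,n}}$ and all error contributions because of the Liouville bound on $\eta_n$. The paper treats only $y$ near $3/8$ explicitly and bounds the integral numerically by $-0.001$; you compute it in closed form via trigonometric identities and attempt to cover $y\in I_1\cup I_1'$ simultaneously.

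There is, however, a sign slip in your evaluation at $y=-3/8$. You assert $\sin(\pm 3\pi+\pi/5)=\mp\sin(\pi/5)$, but in fact $\sin(-3\pi+\pi/5)=\sin(-\pi+\pi/5)=-\sin(\pi/5)$ as well, so both values of $J_+(\pm 3/8)$ equal $-\sin^2(\pi/5)/(4\pi)$, and likewise both $J_-(\pm 3/8)=+\sin^2(\pi/5)/(4\pi)$. As written, your intermediate claim that $J_+(-3/8)>0$ would directly contradict your conclusion $J_+\le -c_0$ on $I_1'$. Fortunately the \emph{correct} signs are exactly what you need, so once the arithmetic is fixed the argument goes through unchanged. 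The rest of your bookkeeping (the $\delta_n$-correction bounded by $1/(20n)$, the bounds on $\BS_x$, $R$, $R'$, and the discretization error) is accurate and matches the paper's treatment.
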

\begin{proof}
Fix $x\in I_{1,n}$ (the same argument holds for all $x\in U_{1,n}$).
Then $q_nx$ lies in an
interval of size 0.01 around the point $3/8$.  
Since $\widetilde{e}_{n, \delta_n}$ is smaller or equal to $\sin(8\pi x)$ for most of the above interval, we have that for large $n$  
$$
\int_{q_n x}^{q_n x+1/20}  \widetilde e_n(t) \leq 
\int_{3/8-0.01}^{3/8+0.04 }\sin 8 \pi t < -0.001.
$$
By (\ref{est_int}) with $M=N_{1,n}=N_{n}/20$, we have
\begin{align*}
\sum_{m=1}^{N_{1,n}} e_n(x+m\al ) K(x+m\a)  & = \widehat K q_n^{-n} N_{1,n} \int_{q_n x}^{q_n x+1/20}
\widetilde e_{n,\delta_n} (t) dt + { N_0 \,o(q_n^{-n})}\\   &< -0.001 \widehat K q_n^{-n} N_{1,n}  
\end{align*}
Since $|R(x,N_{1,n} )|\leq 4\kappa^{-2} N_{1,n} q_n^{-2n}$, and
$\kappa$ and $A$ are independent of
$N_n$, we get from (\ref{main_est}) for any $n$ sufficiently large:
$$
\Sigma_x( N_{1,n}) > 0.001 \widehat K  N_{1,n} q_n^{-n}.$$

{
Recall that, by \eqref{def_qn} and \eqref{def_Tn}, $N_n$ is of order
$q_n^{n^n}$ and $N_{1,n}=[N_n/20]$. Therefore,  $N_{1,n} \geq q_n^{n^6}/40\geq 
q_n^{6n}$, and for sufficiently large
$q_n$ we have $0.001 \widehat K  \sqrt{N_{1,n}} q_n^{-n} \geq 1$.
Hence, }
$$
\Sigma_x(N_{1,n}) > N_{1,n}^{1/2}.
$$
Likewise, $
\Sigma_x(-N_{1,n}) > N_{1,n}^{1/2}$.
%The second statement is proved in the same way.
\color{bleu1}  \end{proof} \black
\begin{lemma}\label{lem_cond2} For
$n$ sufficiently large, we have: 
$$
U_{2,n}\subset  \mathcal C_2(\fp, N_{2,n}, 1/n).
$$
\end{lemma}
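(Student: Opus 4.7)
The plan is to verify each of the three sub-conditions of $\mathcal{C}_2(\fp, N_{2,n}, 1/n)$ at every $x \in U_{2,n}$, for $\fp = \bar\fp + e_n$. I take as auxiliary parameters $A$ equal to a fixed constant above $100$ that also dominates $\ln\|g\| + \ln\|1/g\|$ (the invariant from \S\ref{sec.cob}), and $L := q_n^{n^2}$. Using $N_{2,n} = q_n^{n^5}$ and $\eta_n < q_n^{-n^n}$, the compatibility inequalities $e^{e^A} < L \leq N_{2,n}^{1/n^2} = q_n^{n^3}$ and $N_{2,n} \leq e^{L^{0.1}}$ all hold once $n$ is large enough and $q_n$ is chosen accordingly large.

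The core geometric observation is this: for $x \in I_{2,n}$, $q_n x \in (5/16 - 1/200,\, 5/16 + 1/200)$, which sits at the peak of the positive hump of $\widetilde e_n = \sin(8\pi\,\cdot)$ on $(1/4, 3/8)$. Since $q_n\alpha = p_n + \eta_n$, we have $q_n(x+m\alpha) \equiv q_n x + m\eta_n \pmod 1$, and for $|m| \leq N_{2,n}$ the correction $|m\eta_n|$ is at most $q_n^{n^5 - n^n}$, utterly negligible compared to $1/200$. Hence the entire orbit piece $\{q_n(x+m\alpha) \bmod 1 : |m| \leq N_{2,n}\}$ lives in a minuscule neighborhood of $5/16$ where $\widetilde e_{n,\delta_n}$ is bounded below by, say, $1/2$. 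The same reasoning applies at $-7/16$ for $x \in I'_{2,n}$, and by the $(1/q_n)$-periodicity of $e_n$ the conclusion extends to every translate in $U_{2,n}$. Condition $(\mathcal{C}_2 b)$ then follows immediately from \eqref{est_cond_b}: for every $k \leq k'$ in $[-N_{2,n}, N_{2,n}]$, $e_n(x + m\alpha) \geq 0$ on $[k, k']$, whence $\Sigma_x(k, k') \leq \bar\Sigma_x(k, k') \leq A$.

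For $(\mathcal{C}_2 a)$, I apply Proposition~\ref{prop_estimates}: the main term in \eqref{main_est-}--\eqref{est_int-} at $M = L$ is $\widehat K\, q_n^{-n}\, N_n \int_{q_n x - L/N_n}^{q_n x} \widetilde e_{n,\delta_n}(t)\, dt$, with $L/N_n$ of order $q_n^{n^2 - n^n + 1}$ so that the integrand stays $\geq 1/2$ on the entire range. This forces the main term to be at least $\tfrac12 \widehat K q_n^{n^2 - n}$, which comfortably dominates $\sqrt L = q_n^{n^2/2}$, the bounded remainder $\bar\Sigma_x(-L)$, the correction $|R'| \leq 4\kappa^{-2} L q_n^{-2n}$, and the $L \cdot o(q_n^{-n})$ discretisation error. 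For $(\mathcal{C}_2 c)$, a direct computation modulo $1$ gives $L\alpha \equiv q_n^{n^2-1}\eta_n \pmod 1$, so that $|kL\alpha \bmod 1| \leq q_n^{n^5 - 1 - n^n}$ for $|k| \leq N_{2,n}/L$. Combined with the uniform $C^1$-bound $\|\fp'\| \leq \|\bar\fp'\| + 1/n = O(1)$ (using $|e_n|_{C^{n-1}} < 1/n$), this yields $|\sp(j+kL) - \sp(j)| \leq C\, q_n^{n^5 - 1 - n^n}$, well below the required $N_{2,n}^{-n^3} = q_n^{-n^8}$ once $n \geq 9$.

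The main obstacle is the tension in choosing $L$: it must exceed $q_n^{2n}$ (to guarantee $\Sigma_x(-L) > \sqrt L$ in $(\mathcal{C}_2 a)$) while remaining small enough that both the forward/backward orbit used in $(\mathcal{C}_2 b)$ and the shifts $kL\alpha$ used in $(\mathcal{C}_2 c)$ stay minuscule. The choice $L = q_n^{n^2}$ threads this needle precisely because the Liouville gap $\eta_n < q_n^{-n^n}$ is so extreme that polynomial losses in $q_n$ are absorbed with enormous room to spare; no finer balancing is required beyond this.
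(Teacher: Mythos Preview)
Your proof is correct and follows essentially the same route as the paper: the same choice $L=q_n^{n^2}$, the same verification of the size constraints $e^{e^A}<L\le N_{2,n}^{1/n^2}$ and $N_{2,n}\le e^{L^{0.1}}$, the same use of the orbit localization $q_n(x+m\alpha)\equiv q_nx+m\eta_n$ near $5/16$ (resp.\ $-7/16$) to get $e_n\ge 0$ and hence $(\mathcal C_2b)$ via \eqref{est_cond_b}, the same application of Proposition~\ref{prop_estimates} for $(\mathcal C_2a)$, and the same $|L\alpha|$ estimate for $(\mathcal C_2c)$. Your write-up is in fact slightly more explicit than the paper's in tracking the exponents and in invoking the $C^1$ bound on $\fp$ for $(\mathcal C_2c)$.
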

\begin{proof} We choose $q_n$ and
$N_{2,n}$ satisfying (\ref{def_qn}) and (\ref{def_Tn}). 
%Consider formula (\ref{main_est}) and recall 
Let $\BS_x(M)$ and $A$ be as in \S~\ref{sec.cob};
%(\ref{eqA}) 
recall that $A$ 
only depends on $\bar \fp$. Assuming that $q_n$ is sufficiently large, 
we define
$$
L:=q_n^{n^2} >e^{e^A} .
$$
Since $N_{2,n}=q_n^{n^5}$ by (\ref{def_Tn}),  we have %for $\eps=\frac{1}{n}$
$$
N_{2,n}= L^{n^3} > L^{n^2},
$$
and $N_{2,n} \leq e^{L^{0.1}}$, as required in $\mathcal C_2(\fp, N_{2,n}, \frac1n)$.

Let $x\in I_{2,n}$ 
(the same argument holds for all $x\in U_{2,n}$). 
Then $$q_nx \in [5/16-0.01,5/16+0.01].$$ By the definition of $\widetilde
e_{n,\delta_n}$, for any 
$t\in [q_nx-0.001, q_nx+0.001]$ it holds that $\widetilde
e_{n,\delta_n}(t)  \geq 0.5$. Since by \eqref{def_Tn} we have 
$N_{2,n}/N_n<0.0001$, we get from  (\ref{est_int-}) with $M= L< N_{2,n}$
\begin{align*}
\sum_{m=-L+1}^{0} e_n(x+m\al ) K(x+m\al )  &= \widehat K q_n^{-n} N_{n} \int_{q_n x -L/N_n}^{q_n x}
\widetilde e_{n,\delta_n} (t) dt {+ N_0 \,o(q_n^{-n})}  \\
&\geq  \widehat K
0.49 L q_n^{-n}.
\end{align*}
Then, since $\widehat K>2$, we conclude from \eqref{main_est-} and \eqref{eq.R} that 
%Since $|R(x)|\leq \kappa L q_n^{-2n}$, and $A$ is independent of $N_n$, we get from (\ref{main_est-}):
$$
\Sigma_x(-L) > 0.1 L q_n^{-n}  > \sqrt L.
$$
This gives $(\mathcal C_2a)$.

To verify $(\mathcal C_2b)$, notice that for  $x\in I_{2,n}$ and 
any  $m\in [-N_{2,n},N_{2,n}]$ we have \\$q_nx + m
q_n\al \in [5/16-0.02,5/16+0.02]$. Thus
$$
e_n(x+m\al)= q_n^{-n}\widetilde e_{n,\delta_n} (q_nx + m q_n\al) \geq 0.
$$
By (\ref{est_cond_b}), we have $(\mathcal C_2b)$, i.e., 
$$
\Sigma_x(k,k') \leq  A \ \text{ for all } \ -N_{2,n}\leq k\leq k'\leq N_{2,n}.
$$

To verify $(\mathcal C_2c)$, notice that for $L$ as above we have
$$
|L\alpha|= \frac{L}{q_n}|q_n\alpha| < \frac{L}{q_n} q_n^{-n^n} <
N_{2,n}^{-n^n/2}.
$$
Hence, for any $j\in [0,L-1]$, $k\in [-N_{2,n}, N_{2,n}]$ we have \\
$\DS
|\fp(x+j\a+kL\alpha)-\fp(x+j\a)|\leq  N_{2,n}^{-n^3}.
$
\color{bleu1}  \end{proof} \black

\begin{lemma}\label{lem_cond3} For any  $n$ sufficiently
  large,  we have: 
$$
U_{3,n}\subset  \mathcal C_3(\fp, {N_{3,n}},1/n).
$$
\end{lemma}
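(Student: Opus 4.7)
My plan is to verify condition $\mathcal{C}_3(\fp, N_{3,n}, 1/n)$ for $x \in U_{3,n}$ by making an explicit choice of the free parameters in $\mathcal{C}_3$ and then reading off the required estimates from Proposition~\ref{prop_estimates}. Set $N = N_{3,n} = N_n$, $\varepsilon = 1/n$, $v = v' = 3/8$, $u = u' = 0.24$, $w_\pm = v \pm \frac{1}{2n}$, $w'_\pm = v' \pm \frac{1}{2n}$, $Q = q_n$, and take $A$ to be any constant larger than $\max(100, 2A_0 + 10)$, where $A_0$ is the uniform bound on $|\BS_x(k,k')|$ from Section~\ref{sec.cob}. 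For $x\in I_{3,n}$ we have $q_n x \in (-\frac{1}{200}, \frac{1}{200})$, so the orbit $\{q_n(x+k\alpha) \bmod 1\}_{|k|\le N_n/2}$ realizes a nearly uniform $\eta_n$-grid of the circle anchored near $0$. Combining this with \eqref{est_int}--\eqref{est_int-}, the profile of $\Sigma_x$ is: $\Sigma_x(k) = \BS_x(k) + o(1)$ on the flat window $[-N_n/4, N_n/4]$, two sharp minima of depth of order $\widehat K q_n^{-n} N_n$ at $k \approx \pm 3 N_n/8$, and $\Sigma_x(k) = \BS_x(k) + o(1)$ again at $k = \pm N_n/2$.

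For $(\mathcal{C}_3 a)$ I exploit that $\widetilde{e}_n$ vanishes transversally at $\pm 3/8$ with slope $\mp 8\pi$ outside a microscopic flat neighborhood of width $O(1/n^2)$. A direct expansion gives $\int_{3/8}^{3/8+s}\widetilde{e}_n(t)\,dt \approx -\pi s^2 + O(1/n^4)$ for $|s|\ge 1/n^2$, so $\Sigma_x(w_\pm N_n) - \Sigma_x(vN_n) \asymp \widehat K q_n^{-n} N_n \varepsilon^2$, which is of order $q_n^{-n} N_n / n^2$. Since $N_n \ge q_n^{n^n}$, this far exceeds $N_n^{1/2}$; the analogous computation at $-3/8$ yields the negative-side pair of inequalities.

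For $(\mathcal{C}_3 b)$ the choice $u = u' = 0.24 < 1/4$ is designed precisely so that the window $[-u'N_n, vN_n]$ (and its mirror $[-v'N_n, uN_n]$) does not contain the ascending branch of $\Sigma_x$ near either minimum. As $k$ moves rightward from $-u'N_n$ to $vN_n$, the $e_n$-contribution to $\Sigma_x(k)$ vanishes identically on $[-u'N_n, N_n/4]$ (orbit stays in the flat region of $\widetilde{e}_{n,\delta_n}$) and then is monotonically non-increasing on $[N_n/4, vN_n]$, where $\widetilde{e}_n \ge 0$. Invoking \eqref{est_cond_b} on the monotonic part and using the uniform bound $2A_0$ on $|\BS_x(k)-\BS_x(k')|$, I obtain $\Sigma_x(k') - \Sigma_x(k) \le A$ for every $k \le k'$ in the window; the second inequality of $(\mathcal{C}_3 b)$ follows by the mirror argument invoking \eqref{est_cond_b-}.

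The main technical obstacle is $(\mathcal{C}_3 c)$. Define $\bar\Sigma(k) := \BS_x(k)$ and $B(k) := \Sigma_x(k) - \BS_x(k)$. On $[-u'N_n, uN_n]$ the orbit $\{q_n(x+m\alpha)\bmod 1\}$ stays inside $[-1/4, 1/4]$, where $\widetilde{e}_{n,\delta_n} \equiv 0$; consequently every summand of both the main sum and the quadratic remainder in \eqref{main_est}--\eqref{main_est-} vanishes term by term, giving $B \equiv 0$ exactly---this is precisely the role of the threshold $u, u' < 1/4$. Outside $[-u'N_n, uN_n]$ but still inside $[-v'N_n, vN_n]$, the leading $e_n$-sum has definite non-positive sign and order $q_n^{-n}N_n$, whereas the quadratic remainder is $O(q_n^{-2n}N_n)$ by \eqref{eq.R}, so $B \le 0$ for $n$ large. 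Finally, the required almost-$Q$-periodicity $|\bar\Sigma(k)-\bar\Sigma(k+lq_n)| < q_n^{-1/2}$ reduces via the coboundary identity $\BS_x(k) = \ln g(x+\alpha) - \ln g(x+(k+1)\alpha)$ to bounding $|\ln g(y) - \ln g(y+lq_n\alpha)|$; since $|l| \le v N_n/q_n$ one has $|lq_n\alpha|=|l|\eta_n \le v/q_n$, yielding a bound of $\|(\ln g)'\|_\infty \cdot v/q_n$, which is smaller than $q_n^{-1/2}$ for $q_n$ large. The side conditions $A > 100$ and $e^{e^A} < Q = q_n < N_n^{1/2}$ hold automatically for $n$ sufficiently large, completing the verification.
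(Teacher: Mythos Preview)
Your overall strategy is the same as the paper's, and your treatment of $(\mathcal C_3c)$ and the periodicity bound on $\bar\Sigma$ is fine. The gap is in your choice of parameters: you take $v=v'=3/8$ and $u=u'=0.24$ \emph{independent of $x$}, whereas the paper takes $v=3/8-q_nx$, $u=1/4-q_nx$, $v'=3/8+q_nx$, $u'=1/4+q_nx$. This $x$-dependence is not cosmetic; your $x$-independent choice makes both $(\mathcal C_3a)$ and $(\mathcal C_3b)$ fail for generic $x\in I_{3,n}$.

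The point is that \eqref{est_int} gives the integration limits $q_nx+v$, not $v$. With your $v=3/8$ and, say, $q_nx=0.004$, the orbit at $m=vN_n$ sits at $q_nx+3/8\approx 0.379$, which is \emph{past} the zero of $\widetilde e_n$ at $3/8$. Two consequences:
\begin{itemize}
\item[(i)] For $n>125$ one has $1/(2n)<|q_nx|$, so the entire segment $[w_-N_n,vN_n]$ maps to orbit positions in $(3/8,0.38)$, where $\widetilde e_n<0$. Hence $\Sigma_x(w_-N_n)-\Sigma_x(vN_n)\approx \widehat K q_n^{-n}N_n\int_{q_nx+w_-}^{q_nx+v}\widetilde e_n<0$, so the left half of $(\mathcal C_3a)$ fails. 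Your expansion $\int_{3/8}^{3/8+s}\widetilde e_n\approx -\pi s^2$ is correct at $3/8$, but the actual lower limit is $3/8+q_nx$, and $|q_nx|$ is not dominated by $1/n$.
\item[(ii)] Take $k$ with orbit at exactly $3/8$ (i.e. $k\approx(3/8-q_nx)N_n$) and $k'=vN_n$. On $[k,k']$ the orbit lies in $(3/8,q_nx+3/8]$ where $\widetilde e_n<0$, so $\Sigma_x(k,k')\approx \widehat K q_n^{-n}N_n\int_{3/8}^{3/8+q_nx}|\widetilde e_n|\asymp \widehat K q_n^{-n}N_n(q_nx)^2$, which is of order $q_n^{-n}N_n$ and thus far exceeds any $A<\ln\ln q_n$ allowed by $e^{e^A}<Q=q_n$. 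So $(\mathcal C_3b)$ fails as well. Your appeal to \eqref{est_cond_b} requires $e_n(x+m\alpha)\ge 0$ for all $m$ in the window, which is precisely what breaks once the orbit overshoots $3/8$.
\end{itemize}
The remedy is exactly the paper's: shift the parameters by $q_nx$ so that $q_nx+v=3/8$ and $q_nx+u=1/4$ land on the nose. Then the orbit on $[-u'N_n,vN_n]$ stays in $[-1/4,3/8]$ where $\widetilde e_n\ge 0$, and $vN_n$ is genuinely at the potential minimum; your remaining arguments then go through unchanged.
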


\begin{proof} Let $x\in I_{3,n}$ be fixed (the same argument holds for all $x\in U_{3,n}$). 
Define $Q=q_n$, and take for the numbers $u, v, w_\pm, u', v', w'_\pm$ ($v, v' \in [0.3,0.4]$) 
to be

\begin{align*}
u = \frac{1}{4} - xq_n, \quad  &v=\frac{3}{8}- xq_n, \quad
u'= \frac{1}{4} + xq_n, \quad  v' =\frac{3}{8}+ xq_n, \\
&w_\pm=v\pm\eps, \quad w'_\pm=v'\pm\eps,
\end{align*} 
where we set $\eps=\frac1n$.
Assume without loss of generality that for each of the numbers introduced above, its product
with $N_n$ is an integer that is a multiple of $q_n$.
Let $A>0$ be as in Proposition
\ref{prop_estimates}, and assume that $q_n$ is sufficiently large to
satisfy
$$
e^{e^A}< Q < N_n^{1/2}.
$$

The proof of  $(\C_3a)$ is  almost the same as the proof of $(\C_1)$ in
Lemma \ref{lem_cond1}. 
Namely, 
we have $x+vN_n\alpha = x+v/q_n +\cO(N_n^{-1}) = 3/(8q_n)+\cO(N_n^{-1})$.
Hence
\begin{align*}
\Sigma_x(vN_n, w_+N_n)&= \sum_{m=1}^{(w_+-v)N_n} 
\ln \fq\left(x+(vN_n+m) \alpha\right) -\ln \fp\left(x+(vN_n+m) \alpha\right)
\approx \\
&\sum_{m=1}^{\eps N_n} 
\ln \fq\left(3/(8q_n)+ m \alpha\right) -\ln \fp\left(3/(8q_n) +m \alpha\right)
=\Sigma_{3/(8q_n)}(\eps N_n).
\end{align*}
Notice that $3/(8q_n)\in I_{1,n}$, so the analysis of the latter sum
is analogous to that of Lemma \ref{lem_cond1}.  Let us repeat the argument.
The sum above is estimated using \eqref{est_int}.
Since  $1/n^2\ll \eps$, it follows from the definition of $\widetilde e_{n,\delta_n}$ that it is negative on most of the interval of integration $[ \frac38,  \frac38 +
\eps].$ {Moreover, on all the interval, if $\widetilde e_{n, \delta_n}(t)<0$ then 
  $\widetilde e_{n,\delta_n}(t)\leq   \sin 8 \pi t .$} %while it is negative 
  Thus
  \begin{align*}
\sum_{m=1}^{\eps N_n} &K(3/(8q_n)+m\al )e_n(3/(8q_n)+m\al ) = \widehat K q_n^{-n} N_n
\int_{3/8}^{3/8+\eps} 
\widetilde e_{n,\delta_n}(t) \, dt { + N_0 \,o(q_n^{-n})}\\
&<   \frac{\widehat K}{2} q_n^{-n} N_n
\int_{3/8+\eps/2}^{3/8+\eps} \sin(8\pi t) \, dt < 
- 0.001 \widehat K q_n^{-n} N_n \eps^2 .
\end{align*}
On the other hand $|R(x,\eps N_{n})|\leq 4\kappa^{-2} \eps N_{n} q_n^{-2n}$, and $A$ is independent of
$N_n$. Hence, by \eqref{main_est}, and since  $\widehat K \geq 2$  
$$
\Sigma_x(vN_n, w_+N_n) > 
 0.002 q_n^{-n} N_n \eps^2  -A - 4\kappa^{-2} \eps N_{n} q_n^{-2n} 
 > N_n^{1/2} 
$$
for $N_n$ sufficiently large. The remaining three estimates of this
item are proved in the same way.

To verify $(\mathcal C_3b)$, notice that, by 
the definition of $u'$ and $v$ we have: 
\begin{align*}
x - u' N_n\alpha&= x - u'/q_n+\cO(N_n^{-1})= -1/(4q_n)+\cO(N_n^{-1}), \\ x + vN_n\alpha&= x + v/q_n+\cO(N_n^{-1})= 3/(8q_n)+\cO(N_n^{-1}).
\end{align*}

Hence, for all $m\in [-u' N_n , v N_n]$  we have $e_n(x+m\alpha)\geq 0$. 
By (\ref{est_cond_b}), we have the first
part of $(\mathcal C_3b)$:
$$
\Sigma_x(k, k') \leq A  \ \text{ for all } \ -u' N_n \leq k\leq k'\leq v N_n.
$$
The second part of $(\mathcal C_3b)$ 
is verified in the same way using formula
(\ref{est_cond_b-}).

It remains to verify $(\mathcal C_3c)$. For $k \in [-v'N_n/Q,v'N_n/Q]$, 
take for $\BS(k)$ the sums $\BS_x(k)$ and let   $B_x(M):=\Sigma_x(M) - \BS_x(M)$. 
To verify (\ref{C3per}) notice that 
for each $l\in [-N_n/Q,N_n/Q]$ we have
$$\left| l Q\alpha \right| < \frac{1}{Q}.
$$ Therefore, since $\BS_x(M) =\ln g(x+\a)-\ln g(x+(M+1)\alpha)$
$$
|\BS_x(M)-\BS_x(M+lQ)| \leq 2\frac{\| \ln g \|_{C^1}}{Q} <  Q^{-1/2}
$$
if $Q$ is sufficiently large.

Next, we prove (\ref{C3B}). 
For each  $m\in [-u'N_n,uN_n]$  we have: $x+m\alpha \in
[-\frac{1}{4q_n},\frac{1}{4q_n}]$, and hence $e_n(x+m\alpha)=0$. 
This implies that $\fp(x+m\alpha)=\bar\fp(x+m\alpha)$, and
$$
\Sigma_x(M)=\BS_x(M) \ \ \text{ for all } \ \ M\in [-u'N_n,uN_n].
$$ 
For  $m\in [-u'N_n,vN_n]$  we have: $x+m\alpha \in
[-\frac{1}{4q_n},\frac{3}{8q_n}]$, and hence 
$e_n(x+m\alpha)\geq 0$. Then (\ref{est_cond_b}) implies, in particular, that
that for $M\in [0, vN_n]$ we have
$$
\Sigma_x(M) \leq \BS_x(M).
$$
For $m\in [-v'N_n,uN_n ]$ we have $e_n(x+m\alpha)\leq 0$, 
which implies the second part of (\ref{C3B}) by (\ref{est_cond_b-}).
This completes the proof of $(\mathcal C_3c)$.  
\color{bleu1}  \end{proof} \black

\bigskip \noindent {\bf \color{bleu1}  Proof of Proposition \ref{PropCond}.} Putting together Lemmas \ref{lem_cond1}, \ref{lem_cond2}, \ref{lem_cond3} immediately yields Proposition \ref{PropCond}. $\hfill \Box$

%%%%%%%%%%%%%%%%%%%%%%%%%%%%%%%%%%
%%%%%%%%%%%%%%%%%%%%%%%%%%%%%%%%%%
%%%%%%%%%%%%%%%%%%%%%%%%%%%%%%%%%%

\section{Proofs of the main Theorems} 
\label{Sec_ABC}
\subsection{Proof of Theorem \ref{ThFlMV}.} \label{Sec_A}
%Theorem \ref{ThFlMV} follows directly from the combination of 
%Theorem \ref{Cond_to_behaviors} and Propositions
%\ref{prop.localization}--\ref{prop.nolimit}.

%{\it Detailed verification for $j=3$ (to be deleted)}. 
By Theorem \ref{Cond_to_behaviors},  for any $\fp\in \cR$,  
for almost every $x\in \T$,  there are strictly increasing sequences 
of numbers $N_{j,n}$, such that for all $j=1,2,3$, $n\in \NN$ we have  
$$
x\in \mathcal C_j(\fp, N_{j,n}, 1/n).  
$$ 

For $j=1$, we have that $\sp_i=\fp(x+i\a)$ satisfies condition
 $\mathcal C_1(N_{1,n})$.  Hence, Proposition \ref{prop.localization} implies 
 %the localization conclusion of  
 Theorem \ref{ThFlMV}$(a)$ for $T=r_n:=e^{\sqrt{N_{1,n}/4}}$.

For $j=2$, we have that $\sp_i=\fp(x+i\a)$ satisfies condition
 $\mathcal C_2(N_{2,n},1/n)$.  
 
 Hence, Proposition \ref{prop.drift} implies Theorem \ref{ThFlMV}$(b)$ for $T=s_n:=N_{2,n}^5$ and $\eps_n=1/n$.
  
For $j=3$, function $\sp_i=\fp(x+i\a)$ satisfies 
$\mathcal C_3(N_{3,n},1/n)$. The conclusion of Proposition
\ref{prop.nolimit} holds for any
$T\in[N_{3,n}^5,e^{N_{3,n}^{1/4}}]$. Let us take $T=t_n:= N_{3,n}^5$. Then Proposition
\ref{prop.nolimit}       
 implies that for some $v=v(x) \in [0.3,0.4], v'=v'(x) \in [0.3,0.4]$ it holds 
\begin{equation*}
\begin{cases}
\Prob_x\left( Z_T \in [vN-\frac{N}{n},vN+\frac{N}{n}]   \right) > 0.1, \\ 
\Prob_x\left( Z_T \in [-v'N-\frac{N}{n},v'N+\frac{N}{n}]    \right) > 0.1.
\end{cases}
\end{equation*}
This proves Theorem \ref{ThFlMV}$(c)$ with
$b_n=vN_{3,n}\in[0.3T^{1/5},0.4 T^{1/5}]$, 
$b'_n=v'N_{3,n}\in[0.3T^{1/5},0.4 T^{1/5}]$,  
and $\eps_n=\frac1n$. 

Of course, choosing  larger values for
$T\in[N_{3,n}^5,e^{N_{3,n}^{1/4}}]$ allows to obtain a similar
statement to (c) with $b_n,b'_n$ of order $T^\delta$, for any
$0<\delta<1/5$.

$\hfill \Box$

{ \subsection{Proof of Theorems \ref{ThFlMV2} (a) and  \ref{th.asym} (a).}
 
We give the proof  Theorem \ref{ThFlMV2} (a). The proof of Theorem
\ref{th.asym} (a) is similar.  Fix $\a \notin \Q$. 
Let \begin{multline*} \cR_{n}=\bigg\{\fp \in \cP: \  \exists \sigma,
  \exists t>n,  \text{ such that } \forall x \in \T,
\forall z \in [-n,n], \\ 
\left| \Prob_x(Z_t<  \sigma \sqrt{t} z)-
\Phi(z)\right|<\frac{1}{n}
\bigg\}.
\end{multline*}
The set $\widetilde \cR = \cap_{n \geq 1}  \cR_{n}$ satisfies Theorem
\ref{ThFlMV2} (a). The sets $\cR_{n}$ are open, hence $\widetilde \cR$ is a
$G^\delta$ set.

It remains to show that $\widetilde \cR$ is dense in $\cP$.
By Proposition \ref{PrIM}, $\widetilde \cR$ contains all
  coboundaries. Recall that coboundaries are dense in $\cP$ by Lemma \ref{coboundaries}. Hence
$\widetilde \cR$ is a $G^\delta$-dense set. 
$\hfill \Box$

\subsection{Proof of Theorem \ref{ThFlMV2} (b).} 

Define 
$$
\begin{aligned}
%{multline*} 
\mathcal R_{v,\eps}= 
%\left\{  \right. 
\bigg\{\fp \in \cP: \ &\exists
    {\rm \ open \ sets \ } \mathcal I \text{ and }\mathcal I' \text{
      with } {\rm Leb}({\mathcal
      I}>0.001, {\rm Leb}({\mathcal I'})>0.001 \\
&{\rm such \ that \ }
    \forall (x,x') \in {\mathcal I}\times {\mathcal I'}, \  \exists \
    \mu(x) > v^{1-\varepsilon}, \mu(x')=0, \\
& {\rm and \  for \ }   y \in \{x,x'\}, \ \forall \ z\in [-1/\eps,1/\eps] \ 
\eqref{EqOneSideDr2} {\rm \ holds \ with \ } T=v \bigg\}
%\left. \right\} 
\end{aligned}
$$
%{multline*}

The sets $\mathcal R_{v,\eps}$ are open, 
and any $\DS \fp \in \hat{\cR} :=\bigcap_{\eps=\frac{1}{n}} \bigcup_v \mathcal
R_{v,\eps}$ satisfies Theorem \ref{ThFlMV2} (b). 
Hence, it suffices to
show that  $\fp(x)=\bar\fp(x)+e_n(x)$, as defined in  \eqref{def_pn},
belongs to $\mathcal R_{v_n,\frac 1 n}$, where we set
$v_n=N_{2,n}^5$. For this we take $\mathcal I_n =U_{2,n}$ and  apply
Proposition \ref{prop.drift} and get \eqref{EqOneSideDr2} with
$\mu_n(x)\geq v_n^{1-\eps_n}$  for every $x\in \mathcal I_n$ and every
$z\in \R$.

 On the other hand, we set $\mathcal I'_n= U_{3,n}$ and observe that  
for every $t\leq v_n$ we have $\fp(x+t\a)=\bar\fp(x+t\a)$. 
Hence the walk for such an $x$ up to time $v_n$ is the same as 
the one with the function $\bar \fp$ that is a coboundary. 
Since $N_{2,n}$ can be chosen arbitrarily large as function of $\bar
\fp$, we get  for every $x\in \mathcal I'_n$ \eqref{EqOneSideDr2} with $\mu_n(x)=0$. 

In conclusion, the set $\cR':=\bar \cR \cap \hat{\cR}$ satisfies the 
conditions of Theorem  \ref{ThFlMV2}. $\hfill \Box$

\subsection{Proof of Corollary \ref{CrNoIM}.}
\label{SSNoIM}
If the walk had an absolutely continuous stationary measure, then, 
by Proposition \ref{PrIM},
$\ln \fq-\ln \fp$ would be a smooth coboundary. Then, 
for any given sequence $\{u_N\}$ such that $u_N \to \infty$,  
we would have $\mu \{x: |\Sigma_x(N)|\geq u_N\} \to 0$ as $N \to \infty$.

However, Lemma \ref{lem_cond1} shows that if $\alpha$ is Liouville, then 
for a dense $G_\delta$ set of functions 
$\fp\in \cP$
there exists a sequence $\{N_j\}$ such 
that $|\Sigma_x(N_j)|>\sqrt{N_j}$ for a set of $x$ of measure
$0.01$. A contradiction. 
 $\hfill \Box$

\subsection{Proof of Theorem \ref{Th_B}.} 
Define
%\begin{multline*} 
$$
\begin{aligned}
\cA_{m,n}=\bigg\{ \a \in \R : &\ \forall \fp \in \cP \  \exists \sigma  \text{ such that } \forall x \in \T,
\forall z \in [-n,n], \\ 
&\left| \Prob_x(Z_t<  \sigma \sqrt{t} z)-
\Phi(z)\right|<\frac{1}{n}\quad \text{for all } \ t\in [m,e^m]
\bigg \}.
\end{aligned}
$$
%\end{multline*}
The set $\cA = \cap_{n \geq 1} \cup_{m\geq 1} \cA_{m,n}$ satisfies the
conclusion of the theorem. The sets $\cA_{m,n}$ are open hence $\cA$ is a
$G^\delta$ set.

By \cite{S2}, $\cA$ contains the Diophantine numbers. Hence
$\cA$ is a $G^\delta$-dense set. $\hfill \Box$

\subsection{Proof of Theorem \ref{th.asym} (b).}
\label{SSAsym}

%We prove $(i)$ and $(ii)$. The proof of $(a')$ and $(b')$ is similar. 

Let
$\DS 
\lambda(x)=\frac{\fq(x)}{\fp(x)}.
$
To fix our notation, we assume that $\int \ln \lambda(x) dx=-c<0$ so
that the walk tends to $+\infty$.
The case $\int \ln \lambda(x) dx>0$ then follows by replacing $x$ by $-x$.
We want to perturb $\fp$ to get the behavior of Theorem \ref{th.asym} (b). 

\medskip

Let us first recall an important fact about the drift coefficient of an asymmetric walk in Theorem \ref{ThQLT}.
Following \cite{G-Int}, (see formula (1.6) and Theorem 4 of \cite{G-Int}), we associate to $\lambda$ 
a function 
\begin{equation}
\label{ueq}
 u(x)=1+2 \sum_{k=0}^\infty \prod_{j=0}^k \lambda(x-j\alpha).
 \end{equation}
Then the drift coefficient of the asymmetric walk in Theorem 
\ref{ThQLT} is given by the first integer $b_n(x)$ such that 
\begin{equation}
\label{QDrift}
\sum_{k=0}^{b_n(x)} u_\lambda(x+k\alpha) \geq n.
\end{equation}
The next lemma on the Birkhoff sums of a trigonometric polynomials will be a useful tool in our perturbation of $\fp$. 
 
\begin{lemma} \label{lemma.bound} Let $d,M>0$ and $q$ be such that $q>e^{e^{d+M}}$.
If $V$ is a trigonometric polynomial of degree $d$, and all the
coefficients of $V$ are bounded by $M$,
then for any $x \in \T$ 
$$
\left| \sum_{j=0}^{q-1} e^{V(x+j/q)} -q\int_{\T} e^{V(\theta)}d\theta
\right| <  e^{-q}.
$$
\end{lemma}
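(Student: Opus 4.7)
The plan is to approximate $e^V$ by a Taylor polynomial in $V$ and exploit the exactness of $q$-point Riemann sums on trigonometric polynomials of degree strictly less than $q$. The elementary identity $\sum_{j=0}^{q-1} e^{2\pi i k j/q} = q$ if $q\mid k$ and $0$ otherwise implies that for any trigonometric polynomial $P$ of degree strictly less than $q$ one has the exact equality $\sum_{j=0}^{q-1} P(x+j/q) = q\int_\T P(\theta)\,d\theta$ for every $x\in\T$. This is the only "harmonic" input the argument needs.

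Since $V$ has degree $d$, the $n$-th power $V^n$ is a trigonometric polynomial of degree at most $nd$. I would set $N := \lfloor (q-1)/d\rfloor$ so that $Nd<q$, and split
$$e^V = P_N + R_N, \qquad P_N := \sum_{n=0}^{N}\frac{V^n}{n!}, \qquad R_N := \sum_{n>N}\frac{V^n}{n!}.$$
Then $P_N$ is a trigonometric polynomial of degree $\leq Nd<q$, so applying the exactness identity to $P_N$ kills everything except the Taylor remainder, leaving
$$\left|\sum_{j=0}^{q-1} e^{V(x+j/q)} - q\int_\T e^{V(\theta)}\,d\theta\right| \;\leq\; 2q\,\|R_N\|_\infty.$$

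It then remains to show $2q\,\|R_N\|_\infty < e^{-q}$. Using the crude sup-norm bound $\|V\|_\infty \leq (2d+1)M =: C$ (coming from the assumption that the at most $2d+1$ Fourier coefficients of $V$ have modulus $\leq M$) together with Stirling's inequality $n!\geq (n/e)^n$, I would bound
$$\|R_N\|_\infty \leq \sum_{n>N}\frac{C^n}{n!} \leq \sum_{n>N}\left(\frac{eC}{n}\right)^n.$$
The hypothesis $q>e^{e^{d+M}}$ forces $N+1\geq q/(d+1)$ to dwarf $eC$, so consecutive terms decrease geometrically with ratio $\leq 1/2$ and the tail is at most $2\bigl(eC/(N+1)\bigr)^{N+1}$. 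The required inequality $4q\bigl(eC/(N+1)\bigr)^{N+1}<e^{-q}$ becomes, after taking logarithms and using $N+1\geq q/(d+1)$, a statement of the form $\log q \gtrsim d+\log(dM)$, which holds with enormous slack under $q>e^{e^{d+M}}$ (the latter gives $\log q > e^{d+M}$).

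There is no serious obstacle here; the only "choice" in the argument is the truncation level $N$, which must satisfy the two competing constraints $Nd<q$ (so that the $q$-point Riemann sum of $P_N$ is exact) and $N$ large enough for the Taylor tail to beat $e^{-q}/q$. Both are satisfied simultaneously by $N\approx q/d$, and the hypothesis $q>e^{e^{d+M}}$ is enormously stronger than what the tail estimate actually needs, which explains why no optimization is required in the bookkeeping.
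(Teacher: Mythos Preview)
Your proof is correct and follows the same strategy as the paper: Taylor-expand $e^V$, use exactness of the $q$-point Riemann sum on trigonometric polynomials of degree $<q$, and bound the tail. The only cosmetic difference is the truncation level---the paper takes $N=[2q/\ln q]$ whereas you take the maximal admissible $N=\lfloor (q-1)/d\rfloor$---but both choices work, and your tail estimate is carried out in more detail than the paper's.
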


\begin{proof} First, expand 
$\DS e^{V(\cdot)}=\sum_{k=0}^N \frac{V^k}{k!}+\eps_N$,
where $N:=[2q/\ln q]$, so that  the error $\eps_N$
%1+V+\frac{V^2}{2!}+\ldots$ and we truncate this sum at order $V^{M}$, with $M:=[2q/\ln q]$, 
%we get an error that 
is small compared to $e^{-q}$. On the other hand, 
the polynomials $V^l$ that we keep are all of degree strictly less
than $q$, hence  
$\DS \sum_{j=0}^{q-1} V^l\left(x+\frac{j}{q}\right) =q\int V^l$. 
The lemma follows.  
\color{bleu1}  \end{proof} \black

Let us return to the proof of Theorem \ref{th.asym} (b).
As in the proof of Theorem \ref{ThFlMV2} (b), we only need to show 
density. Hence, by Lemma \ref{coboundaries}, we can start with a  
 $\bar \fp$ such that for some $c>0$
\begin{equation}
\label{LnLambdaCoB}
\ln  \bar \lambda(x) ={  \ln  \bar \fq(x) - \ln  \bar \fp(x) = -c+ \psi(x+\a)-\psi(x)}
\end{equation}
where $\psi$ a trigonometric polynomial. Let $b_n$ be as in 
Theorem \ref{th.asym} (a), see \eqref{eq.simpl2}. 
It is sufficient to prove that $\bar \fp$ can be perturbed into $\fp$ so that for an arbitrarily large ${t_n}$, and for some union of intervals  $\cJ_n$
and $\cJ'_n$ we have 
\begin{itemize}
\item[$(i)$] $\mu(\cJ_n)>0.8$ and $\mu(\cJ'_n)>0.1$;
\item[$(ii)$] For $x\in \cJ_n$ we have $|b_{{t_n}}(x)-b_n|<{t_n}^{1/4}$, and
  for $x\in \cJ'_n$ we have $b_{{t_n}}(x)> b_n+{t_n}^{0.9}$.
  \end{itemize}

We start by computing the drift corresponding to $\bar \fp$ for the special sequence of times: 
$$
{t_n}:=q_n^{n^2} \int_{\T} {\bbu(\theta)}d\theta, \quad n\in \N.
$$
\begin{lemma} \label{lemmabn} There is a constant $U$ (independent of $n$) such that for every $x \in \T$
\begin{equation*}
|\bar b_{t_n}(x) - q_n^{n^2}|\leq U.
\end{equation*}
\end{lemma}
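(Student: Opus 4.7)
The plan is to exploit the coboundary form \eqref{LnLambdaCoB} of $\ln\bar\lambda$ to compute $u$ explicitly, and then to combine Lemma~\ref{lemma.bound} with the very good rational approximation $|q_{n}\alpha-s_{n}|=\eta_{n}<q_{n}^{-n^{n}}$ to show that the ergodic sum $\sum_{k=0}^{q_{n}^{n^{2}}-1}u(x+k\alpha)$ is within $o(1)$ of its ``mean'' $t_{n}$, uniformly in $x$. The desired bound on $\bar b_{t_{n}}(x)-q_{n}^{n^{2}}$ will then follow from the fact that $u\geq 1$.

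The first step is to compute $u$ from the cocycle. By \eqref{LnLambdaCoB} and telescoping,
\begin{equation*}
\prod_{j=0}^{k}\bar\lambda(x-j\alpha)=e^{-c(k+1)}\,e^{W_{k}(x)},\qquad W_{k}(x):=\psi(x+\alpha)-\psi(x-k\alpha).
\end{equation*}
Hence $u(x)=1+2\sum_{k\geq 0}e^{-c(k+1)}e^{W_{k}(x)}\geq 1$, and the series converges uniformly. The key observation is that $W_{k}$ is a trigonometric polynomial of degree $d:=\deg\psi$ with vanishing constant term and Fourier coefficients bounded in modulus by twice those of $\psi$; both the degree and the coefficient bound are \emph{independent of $k$}.

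The second step is the local equidistribution estimate: for every $y\in\T$,
\begin{equation*}
\Big|\sum_{b=0}^{q_{n}-1}u(y+b\alpha)-q_{n}\int_{\T}u\Big|\leq \delta_{n},
\end{equation*}
with $\delta_{n}$ decaying super-polynomially in $q_{n}$. Applying Lemma~\ref{lemma.bound} uniformly to each $W_{k}$ (permissible thanks to the $k$-uniform bounds on degree and coefficients) yields $\bigl|\sum_{j=0}^{q_{n}-1}e^{W_{k}(y+j/q_{n})}-q_{n}\int_{\T}e^{W_{k}}\bigr|<e^{-q_{n}}$. Multiplying by $2e^{-c(k+1)}$ and summing in $k$ gives $\bigl|\sum_{j=0}^{q_{n}-1}u(y+j/q_{n})-q_{n}\int_{\T}u\bigr|\lesssim e^{-q_{n}}$. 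Since $\gcd(s_{n},q_{n})=1$, the set $\{bs_{n}/q_{n}\}_{b=0}^{q_{n}-1}$ is a permutation of $\{j/q_{n}\}_{j=0}^{q_{n}-1}$, and replacing $bs_{n}/q_{n}$ by $b\alpha$ costs at most $b|\alpha-s_{n}/q_{n}|\leq\eta_{n}$ per argument, so at most $\|u'\|_{\infty}q_{n}\eta_{n}\leq\|u'\|_{\infty}q_{n}^{1-n^{n}}$ in total.

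Finally, I bootstrap to horizon $q_{n}^{n^{2}}$ by block decomposition: writing $k=aq_{n}+b$ with $0\leq a<B:=q_{n}^{n^{2}-1}$, $0\leq b<q_{n}$, and $y_{a}:=x+aq_{n}\alpha$, the sum becomes $\sum_{a=0}^{B-1}\sum_{b=0}^{q_{n}-1}u(y_{a}+b\alpha)$, and applying the previous step to each block gives
\begin{equation*}
\Big|\sum_{k=0}^{q_{n}^{n^{2}}-1}u(x+k\alpha)-t_{n}\Big|\leq B\delta_{n}=o(1),
\end{equation*}
since $B\delta_{n}\leq q_{n}^{n^{2}-1}e^{-q_{n}}+q_{n}^{n^{2}-n^{n}}\to 0$ as $q_{n}\to\infty$. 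Writing $T_{b}(x):=\sum_{k=0}^{b}u(x+k\alpha)$, this reads $|T_{q_{n}^{n^{2}}-1}(x)-t_{n}|=o(1)$. Since $u\geq 1$ and $u\leq\|u\|_{\infty}$ uniformly, the increments of $b\mapsto T_{b}(x)$ lie in $[1,\|u\|_{\infty}]$, so the smallest $b$ with $T_{b}(x)\geq t_{n}$ satisfies $|\bar b_{t_{n}}(x)-q_{n}^{n^{2}}|\leq\|u\|_{\infty}+1$ for $n$ large, yielding the lemma with $U:=\|u\|_{\infty}+2$ (enlarging $U$ if necessary to cover finitely many initial $n$). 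The main technical point is the uniform control of $W_{k}$ needed to apply Lemma~\ref{lemma.bound}: factoring out the divergent constant $-c(k+1)$ into the summable weight $e^{-c(k+1)}$ is essential, since otherwise the coefficient bound required in the lemma would grow with $k$.
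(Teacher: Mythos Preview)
Your proof is correct and follows essentially the same route as the paper: you express $\bar u$ via the coboundary identity as $1+2\sum_k e^{-c(k+1)}e^{W_k}$ with $W_k$ a trigonometric polynomial of fixed degree and uniformly bounded coefficients, apply Lemma~\ref{lemma.bound} termwise, pass from the grid $\{j/q_n\}$ to the orbit $\{j\alpha\}$ using the Liouville approximation, and finish by a block decomposition into pieces of length $q_n$. The only difference is cosmetic: you make the final passage from the ergodic-sum estimate to the bound on $\bar b_{t_n}(x)$ explicit (using $1\leq u\leq\|u\|_\infty$), whereas the paper leaves this as an immediate consequence.
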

\begin{proof} { Observe that if \eqref{LnLambdaCoB} holds then the function $\bar u$ associated to $\bar \lambda$ as in \eqref{ueq} can be written as 
\begin{equation}
\label{DriftCoB}
\bar u (x)= 1+ 2 \sum_{k=0}^{\infty} e^{-c(k+1)} e^{V_k(x)} 
\end{equation}
where $V_k(x)=\psi(x+\a)-\psi(x-k\a).$ 

Applying 
Lemma \ref{lemma.bound} to each term in \eqref{DriftCoB} (note that the norm of $V_k$ is bounded uniformly
in $k$), we conclude} that
 if $q_n$ is sufficiently large
then
$$
\left| \sum_{j=0}^{q_n-1} \bbu\left(x+\frac{j}{q_n}\right) -q_n\int_{\T}
{\bbu(\theta)}d\theta \right| <  e^{-q_n}.
$$

{ 
On the other hand, \eqref{def_qn} tells us that there is an integer
$p_n$ such that
$$
\left|\alpha-\frac{p_n}{q_n}\right|\leq q_n^{-n^4}. 
$$
Thus,
$$\left|\sum_{j=0}^{q_n-1} \bbu\left(x+j\a\right)-
\sum_{j=0}^{q_n-1} \bbu\left(x+\frac{p_n j}{q_n}\right)\right|\leq \|\bru\|_{C^1} q_n^{1-n^4}. $$
Observe that
$$ \sum_{j=0}^{q_n-1} \bbu\left(x+\frac{p_n j}{q_n}\right)=
 \sum_{j=0}^{q_n-1} \bbu\left(x+\frac{j}{q_n}\right),
$$
since as $j$ changes from $0$ to $q_{n-1}$ the set $p_n j$ goes over all possible residues mod $q_n.$ }
Therefore, for every $x$ in $\T$ we have for $n$ sufficiently large: 

%Hence if $q_{n+1}\geq q_n^{n^3}$ (with $q_n$ the sequence $q_n(\a)$) then
\begin{equation*}
\left| \sum_{j=0}^{q_n-1} \bbu(x+j\a) -q_n\int_{\T} {\bbu(\theta)}d\theta \right|
<  q_n^{-n^3}.
\end{equation*}
{ Dividing an orbit of length $q_n^{n^2}$ into pieces of length $q_n$, we obtain}
\begin{equation*}
\left| \sum_{j=0}^{q_n^{n^2}} \bbu(x+j\a) -q_n^{n^2} \int_{\T}
{\bbu(\theta)}d\theta \right| <  q_n^{-n^3/2}.
\end{equation*}
This yields the conclusion of the lemma. \color{bleu1}  \end{proof} \black

Now we let $g_n$ be a smooth function satisfying
\begin{itemize}
\item[(a)] $\|g_n\|_{C^n} \leq 2^{-n}$;
\item[(b)] $g_n(\theta)=0$ for $\{q_n \theta\} \in[0,0.85]$;
\item[(c)] $g_n(\theta)=q_n^{-n-1}$ for $\{q_n \theta\} \in[0.86,0.99]$,
\end{itemize}
and let  $\fp(\theta)=\bar \fp(\theta)+g_n(\theta)$.
Define
$$\cJ_n=\{x \in \T : \{q_n x\} \in[0,0.84]\}, \quad
\cJ'_n=\{x \in \T : \{q_n x\} \in[0.86,0.98]\} . $$
We clearly have that $\mu(\cJ_n)>0.8$ and $\mu(\cJ'_n)>0.1,$ which is
$(i)$. To finish, we need to prove $(ii)$. 

{ Note that for $j \in [0,2q_n^{n^2}]$ we have 
$\DS q_n(x+j\a)=q_n x+jp_n+O\left(q_n^{2-n^4}\right).$}
%Since $q_{n+1}\geq q_n^{n^3}$, 
Thus, it follows from (b) that for $x\in \cJ_n,$  $u(x+j\a) =
\bbu(x+j\a)$ for every $j \in [0,2q_n^{n^2}].$  Hence, we get  that 
$\DS b_{t_n}(x) = \bar b_{t_n}(x)$, and by Lemma \ref{lemmabn}, taking $b_n:=q_n^{n^2}$ we have  
\begin{equation}|b_{t_n}(x)-b_n| \leq U. \label{Ux} \end{equation}

On the other hand, for $x \in \cJ'_n$, we have from (c) that $u(x+j\a) \leq (1-q_n^{-n-2})
\bbu(x+j\a)$ for every $j \in [0,2q_n^{n^2}].$ { Hence,
$$ 
\sum_{j=0}^{b_n} u(x+j\a)\leq (1-q_n^{-n-2}) 
\left[\sum_{j=0}^{b_n}\bbu(x+j\a)\right]=t_n+O(1)-
\frac{t_n}{q_n^{-n-2}} <t_n-t_n^{0.95} .
$$
Therefore, for $x \in \cJ'_n$ we have:
$$
\sum_{j=b_n+1}^{b_n(x)} u(x+j\a)\geq t_n^{0.95},  
$$
and so $\DS b_n(x)>b_n+\frac{t_n^{0.95}}{\max_\theta u(\theta)}>b_n+{t_n}^{0.9}.$ 
Together with \eqref{Ux} this shows $(ii)$ and finishes the proof of  Theorem \ref{th.asym} (b)
%$ \DS b_{t_n}(x) \geq (1+q_n^{-n-3}) \bar b_{t_n}(x) > q_n^{n^2}+{t_n}^{0.9}. $
$\hfill \Box$

\begin{appendix}
\section{\hskip-5mm Generic deterministic  environments.}

\label{SSGenApp}
%\subsection{\tiny Statements.}

%Here we deal with generic environments $\sp \in \cE$ and prove Theorem \ref{ThGenErr}. 

Here we prove  Theorem \ref{ThGenErr}.
The main idea is the following. 
If we want to speed up the walk, we modify $\sp$
by adding a drift away from the origin, 
while to slow it down we increase the drift towards the origin.\footnote{\red The proof of  theorem \ref{ThGenErr} is a bit sketchy and the problem is that in the introduction we advise to start with this proof, and in the letter we say that nothing is anymore sketchy...}

\medskip 
\noindent {\bf Proof of $(a)$.} {\red We will use the notations and definitions of Section \ref{sec.mart}. }By \eqref{Rec}, the recurrence holds iff $M(n)\to\pm \infty$ as $n\to\pm\infty.$ 
The result follows since for each $R$ the condition that there is $n\in \NN$ such that
$M(n)>R$ and $M(-n)<-R$ is open and dense {\red for the product topology introduced in Definition \ref{Def_set_E}}. Openness is straightforward, and to obtain the density it is enough
to modify any given  $\sp$ to $\tilde \sp$ satisfying
\begin{equation}
\label{In23}
\tilde \sp(n)=\begin{cases} \frac{1}{3} & \text{for } n>K \\
\frac{2}{3} & \text{for } n<-K. \end{cases}
\end{equation}

\medskip 
\noindent {\bf Proof of $(b)$.} We also consider the environment given by \eqref{In23}. Note that for this environment
there are constants $C_1=C_1(K),$ and $C_2=C_2(K)$ such that 
$$ |M(n)|\geq C_1 e^{C_2|n|}.$$
Thus, for each $T$ and $r\geq 0$ 
$$
\Prob(|\barZ_T|\geq r)\leq \frac{1}{C_1 e^{C_2 r}}.
$$
It follows that for large $T,$ \eqref{EqGenLoc} is satisfied, showing the density of this condition. 
The openness is also clear.

\medskip 
\noindent {\bf Proof of $(c)$.} It is sufficient to show that for each $\eps$ the set of environments
such that for some $T$
$$ \sup_z \left|\Prob\left(\frac{\barZ_T-\frac{T}{3}}{\sqrt{\frac{8T}{9}}}\leq z\right)-\Phi(z)\right|<\eps $$
is dense.
We now modify any given environment so that $\tilde \sp(n)=\frac{2}{3}$ for $|n|>K.$ 
Then the walk spends a finite time to the left of $K.$  It follows that 
$$\Prob\left(\frac{\barZ_T-\frac{T}{3}}{\sqrt{\frac{8T}{9}}}\leq z\right)\to \Phi(z)$$
uniformly in $z$ as needed.

To prove part $(d)$, we modify a given environment outside $[-K, K]$ in three steps. 
First we take $K_1\gg K$ and modify $\sp$ on $[-K_1, K_1]\setminus [-K, K]$ to achieve that
$$
\sum_{j=1}^{n} \ln \tilde\fq(j)-\ln\tilde\fp(j)=\sum_{j=-n+1}^{0} \ln
\tilde\fp(j) -\ln \tilde\fq(j) ,
$$
where $\tilde\sq(j)=1-\tilde\sp(j).$
Next we take $K_2\gg K_1$ and let $\tilde\sp(n)=\frac{1}{2}$ if $|n|\in [K_1+1, K_2].$
Finally, we let $\tilde\sp(n)=\frac{1}{3}$ if $n<-K_2$ and $\tilde\sp(n)=\frac{2}{3}$ if $n>K_2.$
It is easy to see that, given $\eps>0$, we can make $K_1$ and $K_2$ so large that 
\begin{equation}
\label{ScalesEqual}
1-\eps< \frac{|M_-|}{|M_+|}
<1+\eps,
\end{equation}
where $M_+$ and $M_-$ are defined in \eqref{EscRight}. Then \eqref{EscRight} shows that
%$$ M_-:= \lim_{n\to\infty} M(-n), \quad M_+:=\lim_{n\to\infty} M(n). $$
%Since both limits are finite the walk is transient and moreover
$$ \Prob\left(\lim_{t\to\infty} \barZ_T=+\infty\right)=\frac{|M_+|}{|M_+|+|M_-|} 
\in \left[\frac{1}{2+\eps}, \frac{1}{2-\eps}\right].
$$
The same holds for $ \DS \Prob\left(\lim_{t\to\infty} \barZ_T=-\infty\right)$.

On the other hand, it is easy to see that
$$ \Prob\left(\frac{\barZ_T-\frac{T}{3}}{\sqrt\frac{8T}{9}}\leq z\left|\lim_{t\to\infty} \barZ_t=+\infty\right.\right)=\Phi(z)$$ 
and 
$$\Prob\left(\frac{\barZ_T+\frac{T}{3}}{\sqrt\frac{8T}{9}}\leq z\left|\lim_{t\to\infty} \barZ_t=-\infty\right.\right)=\Phi(z) . $$
It follows that for $T$ sufficiently large \eqref{EqGenTwoSideDr} is satisfied with $b(T)=\frac{T}{3},$
$\eps(T)=T^{-1/3}$ proving the density of this condition. \hfill $\Box$ 

\end{appendix}

\bigskip 

\noindent{\sc \color{bleu1} Acknowledgement.} We are grateful for two anonymous referees who made numerous comments and suggestions that helped us make substantial revisions to the first version of this paper.

\bigskip

\vspace{.6cm}

\noindent
Dmitry Dolgopyat\\ %[1ex]
University of Maryland \\
% xxx \\
Maryland, United States\\
email:\,{\it {dolgop@umd.edu}}

%\newpage
\vspace{.6cm}

\noindent
Bassam Fayad\\ %[1ex]
CNRS UMR7586 -- IMJ-PRG \\
% xxx \\
Paris, France\\
email:\,{\it{bassam.fayad@imj-prg.fr}}

%\newpage
\vspace{.6cm}

\noindent
Maria Saprykina \\ %[1ex]
Institute f\"{o}r Matematik, KTH  \\
% xxx \\
Stockholm, Sweden\\
email:\,{\it {masha@kth.se}}

\end{document}